\theoremstyle{plain}
\newtheorem{theorem}{Theorem}[section]
\newtheorem{proposition}[theorem]{Proposition}
\newtheorem{corollary}[theorem]{Corollary}
\theoremstyle{definition}
\newtheorem{example}[theorem]{Example}
\newtheorem{remark}{Remark}[section]
\newcommand{\Z}{\mathbb{Z}} 
\newcommand{\R}{\mathbb{R}} 
\newcommand{\C}{\mathbb{C}} 
\newcommand{\n}{\mathfrak{n}}
\newcommand{\g}{\mathfrak{g}}
\newcommand{\s}{\mathfrak{s}}
\newcommand{\h}{\mathfrak{h}}
\begin{document}

\title{ SKT solvable Lie algebras with codimension two nilradical} 

\author{Beatrice Brienza}
\address[Beatrice Brienza]{Dipartimento di Matematica ``G. Peano'', Universit\`{a} degli studi di Torino \\
Via Carlo Alberto 10\\
10123 Torino, Italy}
\email{beatrice.brienza@unito.it}

\author{Anna Fino}
\address[Anna Fino]{Dipartimento di Matematica ``G. Peano'', Universit\`{a} degli studi di Torino \\
Via Carlo Alberto 10\\
10123 Torino, Italy\\
\& Department of Mathematics and Statistics, Florida International University\\
Miami, FL 33199, United States}
\email{annamaria.fino@unito.it, afino@fiu.edu}

\keywords{Solvable Lie algebra, SKT metric, Generalized K\"ahler structure}

\subjclass[2010]{53C55; 53C05; 53C30; 53C44}

\maketitle

\begin{abstract}  In the present paper we study SKT and generalized K\"ahler structures on solvable Lie algebras with (not necessarily abelian) codimension  two nilradical. We treat separately the case of $J$-invariant nilradical and non $J$-invariant nilradical. A classification of such SKT Lie algebras in dimension six is provided. In particular, we give a general  construction to extend SKT nilpotent Lie algebras to SKT solvable Lie algebras of higher dimension, and we construct new examples of SKT and generalized K\"ahler compact solvmanifolds.  

\end{abstract}

\section{Introduction}
Let $(M,  J, g)$ be an Hermitian manifold of complex dimension $n$ with fundamental form $\omega=g(J\cdot,\cdot)$. A connection $\nabla$ on $TM$ is said to be \emph{Hermitian} if $\nabla g=0$ and $\nabla J=0$. In \cite{PG}, Gauduchon has introduced an affine line of Hermitian connections, known as \emph{Gauduchon} or \emph{canonical} connections, which can be written as
\begin{equation*} 
g(\nabla^t_XY,Z)=g(\nabla^{LC}_XY,Z)+\frac{t-1}{4}(d^c\omega)(X,Y,Z)+\frac{t+1}{4}(d^c\omega)(X,JY,JZ),
\end{equation*}
where $d^c \omega=-Jd\omega$. We adopt the convention $Jd\omega(X,Y,Z):=d\omega(JX,JY,JZ)$.
When the manifold $(M,J,g)$ is K\"ahler, namely, $d^c\omega$ is zero, the line  of connections collapses to a single point, which is the Levi-Civita connection $\nabla^{LC}$, whereas, when  $(M,J, g)$ is non-K\"ahler, the line is non trivial and the connections $\nabla^t$ have non vanishing torsion. 
For particular values of $t \in \R$, we recover connections that play a relevant role in complex (non-K\"ahler) geometry. For $t=1$, the \emph{Chern connection} $\nabla^{1}=\nabla^{Ch}$ (\cite{CH}), characterized by $(\nabla^{Ch})^{0,1}=\overline{\partial}$, and for $t=-1$ the \emph{Bismut} (or \emph{ Strominger}) \emph{connection} $\nabla^{-1}=\nabla^{B}$ (\cite{Bi,Strominger}), characterized as the only Hermitian connection with totally skew-symmetric torsion. Although $\nabla^{LC}, \nabla^{B}, \nabla^{Ch}$ are mutually different connections, any one of them completely determines the other two, e.g. the Bismut connection can be defined in terms of the Levi-Civita one as
\begin{equation*} 
g(\nabla^B_XY,Z)=g(\nabla^{LC}_XY,Z)-\frac{1}{2}d^c\omega(X,Y,Z),
\end{equation*}
and its torsion $3$-form $c$, also called the \emph{Bismut torsion}, is
$$
c(X, Y,  Z) = g(T^B (X, Y), Z) = d \omega (JX, JY, JZ) = - d^c \omega (X, Y, Z).
$$
If the torsion $3$-form $c$ is closed, i.e.,  $d d^c \omega =0$ or, equivalently, $\partial \overline \partial \omega=0$, the metric $g$ is said {\em strong K\"ahler with torsion} (SKT in short) or {\em pluriclosed}. \\
SKT metrics also appear naturally in the setting of \emph{generalized K\"ahler geometry}: according to \cite{Gualtieri}, a \emph{generalized K\"ahler} structure on a $2n$-dimensional manifold $M$ is a pair of commuting generalized complex structures $(\cal{J}_1,\cal{J}_2)$ such that $\cal{G}=-\cal{J}_1\cal{J}_2$ is a positive definite metric on $TM \oplus T^*M$. It turns out  that the generalized K\"ahler condition can be equivalently described in the language of Hermitian geometry as a bi-Hermitian structure $(J_+,J_-, g)$ on $M$ such that the Bismut torsions $c_\pm$ of $(J_\pm, g)$ satisfy $c_+=-c_-$ and $dc_+=0$ (see \cite{Gualtieri} for further details).  In \cite{Hitchin}, Hitchin has proved that whenever $[J_+,J_-]\neq 0$, the tensor $\sigma=[J_+,J_-]g^{-1}$ defines a holomorphic Poisson structure. Generalized K\"ahler structures $(g,J_\pm)$ are said  \emph{non-split} if $[J_+,J_-]\neq 0$ and \emph{split} otherwise: in the latter case the tensor $Q=J_+J_-$ is an involution of $TM$ and induces the splitting $TM=T_+M \oplus T_-M$ in  terms of  its $\pm1$-eigenbundles. \\
As one may easily observe, if $(M, J, g)$ is K\"ahler, then $(M,  \pm J, g)$ is generalized K\"ahler: generalized K\"ahler structures which arise in such a way are said \emph{trivial}. As a consequence, much study has been devoted to the explicit construction of non-trivial generalized K\"ahler manifolds, e.g., \cite{ADE,AGG, AG, BM, CG, DM, FP, FP2, FT, BF}.  In contrast with the case of compact nilmanifolds, which cannot admit  invariant generalized K\"ahler structures unless they are tori \cite{CA}, several families of (non-K\"ahler) compact generalized K\"ahler manifolds are  compact solvmanifolds (\cite{ FP, FP2, FT}), namely, compact quotients of a connected and simply connected solvable Lie group by a lattice.  \\
By \cite{Has}, a compact solvmanifold admits a K\"ahler structure if and only if it is a finite quotient of a complex torus which has the structure of a complex torus bundle over a complex torus. No general restrictions are known to the existence of generalized K\"ahler structures on compact solvmanifolds: up to now the only known examples have abelian nilradical, even though it is still an open question whether this is true in general. \\
Since the underlying metric of a generalized K\"ahler structure is in particular SKT, a more general problem regards the existence of SKT structures on solvmanifolds $\Gamma \backslash G$.  When the complex structure is invariant, i.e., it descends from a left-invariant complex structure on $G$, exploiting the symmetrization process (\cite{BE}) the problem reduces to investigate the existence of SKT inner products at the level of the Lie algebra $\g=Lie(G)$. Although it is a simplified setting, the solvable case seems to be harder than the nilpotent case, even in low dimensions. The existence of a SKT structure on a nilpotent Lie algebra imposes severe restrictions: Arroyo and Nicolini proved in  \cite{ArNic} that the existence of a SKT metric on a nilpotent Lie algebra implies that the nilpotency step is at most $2$, as conjectured in \cite{EFV}. Furthermore, SKT nilpotent Lie algebras of dimension 6 and 8  have been fully classified  in \cite{FPS04} and  \cite{EFV}, respectively. \\
SKT structures on (non-nilpotent) solvable Lie algebras have been instead studied in several papers \cite{FP,FP2,FP3,AL,FS,MS}, however, a full classification has been obtained only in dimension $4$ in \cite{MS}. In dimension $6$, the second author and Paradiso have classified the SKT almost nilpotent Lie algebras in \cite{FP,FP2,FP3} and Freibert and Swann have classified in \cite{FS,FS2} the SKT $2$-step solvable Lie algebras.  Furthermore, Hermitian geometry of solvable Lie algebras with an abelian ideal of codimension $2$ have been recently investigated in \cite{CZ,GZ}.   \\
Another special class of Hermitian structures on complex manifolds is provided by the \emph{balanced} structures, namely, Hermitian structures $(g,J)$ whose fundamental form $\omega$ is co-closed or, equivalently, satisfying $d\omega^{n-1}=0$. It has been conjectured in \cite{FV}, that a compact complex manifold cannot admit both SKT and balanced metrics, unless it admits K\"ahler metrics as well. In the locally homogeneous setting, the conjecture holds true for nilmanifolds (\cite{FV}), almost abelian solvmanifolds (\cite{FP4}), six-dimensional almost nilpotent solvmanifolds (\cite{FP3}) and on solvable Lie algebras with an abelian ideal of codimension $2$ (\cite{CZ,GZ,FS2}). Non compact counterexamples of this conjecture have been instead constructed in \cite{FS2}. \\
In this work, we mainly focus on Hermitian structures on solvable Lie algebras $\g$ with (not necessarily abelian) nilradical $\h$ of codimension $2$. Since $\h$ has even dimension, if $(\g, J, \langle \cdot,\cdot \rangle)$ is Hermitian, we may distinguish the two cases $J\h=\h$ and 
 $J\h \neq \h$,
which will be treated separately. 
Section \ref{section2}  is devoted to study the first case, i.e.,  Hermitian Lie algebras $(\g, J, \langle \cdot,\cdot \rangle)$ with codimension $2$ $J$-invariant nilradical $\h$. In Theorem \ref{theorem:sktinvariant} we prove that  the Hermitian structure $(J, \langle \cdot,\cdot \rangle)$ is always Chern Ricci flat, extending a result given in \cite{GZ} in the case of $\h$ being abelian, and we give necessary conditions for  $(J, \langle \cdot,\cdot \rangle)$ to be SKT. In particular we observe that the SKT condition imposes restrictions on the structure of $\h$, namely, it has to be $2$-step nilpotent. As a consequence (see Corollary \ref{thm:abelian}) we study the SKT condition when $\h$ is abelian.  Although in \cite{GZ} Hermitian structures on Lie algebras with a $J$-invariant abelian ideal of codimension $2$ are investigated, we are able to prove that when the ideal coincides with the nilradical of the Lie algebra, the SKT condition has a more specialized characterization. Furthermore, we prove that if $\g$ admits a generalized K\"ahler structure $( J_{\pm}, \langle \cdot,\cdot \rangle)$ satisfiying $J_\pm \h=\h$, then  $(J_\pm, \langle \cdot,\cdot \rangle)$ must be K\"ahler.  \\
In  Section \ref{section3} we investigate Hermitian Lie algebras $(\g, \langle \cdot,\cdot \rangle, J)$ with codimension $2$ nilradical $\h$ such that $J\h \neq \h$, with a special focus on the existence  of generalized K\"ahler structures  $(J, I, \langle \cdot,\cdot \rangle)$ such that $J\h=\h$ and $I\h \neq \h$. 
We also construct the first example of generalized K\"ahler Lie algebra with non-abelian nilradical: however, the example is not unimodular.
 In Section \ref{section4}, we provide a full classification of unimodular six-dimensional solvable Lie algebras with codimension $2$ nilradical. 
Section  \ref{section5} is devoted to the construction of new examples of SKT solvable Lie algebras. In particular, we provide a general process to extend SKT nilpotent Lie algebras of dimension $2n$ to SKT solvable Lie algebras of dimension $2n+2k$. We apply this construction to the  six-dimensional  SKT nilpotent Lie algebras classified  in \cite{FPS04} (see also \cite{UG}), to obtain new families of SKT solvable Lie algebras in dimension eight. 
  Finally, in the last section, we exhibit some results on the existence of generalized K\"ahler structures on solvmanifolds with codimension  two  nilradical and we construct new examples of compact SKT and generalized K\"ahler solvmanifolds.
 
 \smallskip

\textbf{Acknowledgements}. 
The authors are partially supported by Project PRIN 2022 \lq \lq Geometry and Holomorphic Dynamics” and by GNSAGA (Indam). Anna Fino   is also supported  by a grant from the Simons Foundation (\#944448). The authors would like to thank Alejandro Tolcachier for his interest in this paper. The first author would like to thank Tommaso Sferruzza for useful conversations and remarks. 
\smallskip

\section{ Case $J \h = \h$} \label{section2}
Let $\g$ be a $2n$-dimensional solvable Lie algebra with a codimension $2$ nilradical $\h$, endowed with an almost Hermitian structure $(J, \langle \cdot, \cdot \rangle)$ such that $J\h=\h$. We can decompose $\g$ as the orthogonal sum $\h \oplus \h^\perp$, where each summand is $J$-invariant and $\dim_{\C} \h^{\perp} =1$. \\
Let  $U$   be a unit vector of $\h^\perp$.  We have that the Lie bracket of $\g$ is given by
\[
[U, Y]=A Y, \ [JU ,Y]=B Y, \  [Y,W]=  \mu (Y, W),  \  [U,JU]=V,    \quad \forall Y,W \in \h,
\]
where $A:= ad_U \vert_{\frak h}, B := ad_{JU} \vert_{\frak h}$ are derivations of $\h$ and  $\mu$  is the  Lie bracket $[\cdot, \cdot]_{\h}$ on $\frak h. $ Note that $V \in \frak h$, since $\frak g^1$ is contained in $\h$ and  $ad_{V}\vert_{\h} = [A, B]$. \\
Furthermore, since  the restrictions of $J$ and $\langle \cdot, \cdot \rangle$ to $\h^\perp$ are completely determined by choosing the orthonormal basis $\{U,JU\}$ of $(\frak h^{\perp}, \langle \cdot, \cdot \rangle_{\frak h})$, we have that  the almost Hermitian Lie algebra $(\g, J, \langle \cdot, \cdot \rangle)$ is uniquely determined by the algebraic data
$(U, JU, A,B,V,\mu, J_\h, \langle \cdot,\cdot \rangle_\h ).$
\begin{remark}  
Note that the data $(U, JU, A,B,V,\mu)$ determine a Lie algebra if and only if $A,B$ are derivations of $\h$, $[A,B]=ad_V \vert_\h$ and $\mu$ is a Lie bracket on $\h$.
\end{remark}
\begin{theorem} \label{theorem:sktinvariant}
Let  $(\g,  J, \langle \cdot, \cdot \rangle)$ be an  almost Hermitian solvable Lie algebra with a $J$-invariant codimension $2$ nilradical $\frak h$ and let $\{U, JU \}$ be an orthonormal basis of $(\frak h^{\perp},  \langle \cdot, \cdot \rangle_{ \h^{\perp}})$. Then 
\begin{enumerate}
\item[(i)] The complex structure  $J$ is integrable, if and only if  $J_\h$ is a complex structure on $\h$ and  $A:= ad_U \vert_{\frak h}$ and  $B := ad_{JU} \vert_{\frak h}$ satisfy the following condition  $$[J_\h,A]J_\h+[J_\h,B]=0.$$ \label{thm:integrability}
\item[(ii)]  If $J$ is integrable, then  the Hermitian Lie algebra  $(\g,  J, \langle \cdot, \cdot \rangle)$ is Chern Ricci flat. \label{thm:chernricciflat}
\item[(iii)] If $J$ is integrable, $\g$ is unimodular and $[U,JU]=0$, then   $(J, \langle \cdot, \cdot \rangle)$ is balanced if and only if $(J_\h, \langle \cdot, \cdot \rangle_\h)$ is balanced. 
\item [(iv)] If  $J$ is integrable and  the Hermitian Lie algebra  $(\g, J,  \langle \cdot, \cdot \rangle)$ is SKT, then the nilradical  $\frak h$ is at most $2$-step nilpotent and  the restrictions $A_{\mathfrak{z}(\h)}, B_{\mathfrak{z}(\h)}$ of $A$ and $B$ to the center  $\mathfrak{z}(\h)$ of $\frak h$ must satisfy the following conditions 
$$
A_{\mathfrak{z}(\h)}, B_{\mathfrak{z}(\h)} \in \mathfrak{so}(\mathfrak{z}(\h)), \quad [A_{\mathfrak{z}(\h)},J_{\mathfrak{z}(\h)}]=[B_{\mathfrak{z}(\h)},J_{\mathfrak{z}(\h)}]=0. 
$$
\end{enumerate}
\end{theorem}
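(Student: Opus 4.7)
The plan is to work in the adapted decomposition $\g = \h \oplus \langle U, JU\rangle$ with the structural data $(A, B, V, \mu, J_\h, \langle\cdot,\cdot\rangle_\h)$, and to split each curvature or torsion quantity according to which factor the arguments lie in.

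For (i), I would expand the Nijenhuis tensor
\[
N_J(X,Y) = [JX,JY] - J[JX,Y] - J[X,JY] - [X,Y]
\]
on the three types of pairs. On $\h\times\h$, $J$ restricts to $J_\h$ and the bracket is $\mu$, so $N_J|_{\h\times\h} = N_{J_\h}$, giving integrability of $J_\h$. For a mixed pair $(U, Y)$ with $Y \in \h$, substituting $[U, Y]=AY$, $[JU, Y]=BY$ and simplifying collapses things to
\[
N_J(U, Y) = -[J_\h, A]J_\h Y - [J_\h, B]Y,
\]
whose vanishing for all $Y \in \h$ is the stated condition. The pair $(U, JU)$ gives $N_J(U,JU)=0$ automatically since $V\in\h$ and $J\h=\h$.

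For (ii), I would exhibit a left-invariant $(n,0)$-form and control its defect from being holomorphic. Take $Z_n = U - iJU$ and a basis $\{Z_1,\ldots,Z_{n-1}\}$ of $\h^{1,0}$, and set $\Omega = \zeta^1\wedge\cdots\wedge\zeta^n$. There is a unique left-invariant $(0,1)$-form $\alpha$ with $\bar\partial\Omega = \alpha\wedge\Omega$, characterised by $\alpha(\bar W) = -\mathrm{tr}_\C(\pi^{1,0}\mathrm{ad}_{\bar W}|_{\g^{1,0}})$. For $\bar W\in\h^{0,1}$, the trace collapses to one over $\h^{1,0}$ (the $Z_n$-components vanish since $\h$ is an ideal), integrability of $J_\h$ makes $\h^{0,1}$ invariant under $\mathrm{ad}_{\bar W}$, and nilpotency of $\mathrm{ad}_{\bar W}|_{\h_\C}$ (as $\bar W\in\h_\C$ and $\h$ is nilpotent) forces the $(1,1)$-block to be nilpotent, hence $\alpha(\bar W)=0$. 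Thus $\alpha = c\,\bar\zeta^n$ for some $c\in\C$. Since $\|\Omega\|_g$ is constant, one has $\rho^{Ch}=-\partial\alpha = -c\,\partial\bar\zeta^n$. The inclusion $[\g,\g]\subset\h$ (automatic as $\g/\h$ is abelian) makes $\bar\zeta^n([X,Y])=0$ for all $X,Y\in\g$, so $d\bar\zeta^n=0$ and $\rho^{Ch}=0$; this extends the argument of \cite{GZ} from the abelian to the nilpotent case.

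For (iii), I would use $[\g,\g]\subset\h$ to infer $dU^\flat=d(JU)^\flat=0$, and decompose $\omega = \omega_\h + U^\flat\wedge(JU)^\flat$ so that
\[
\omega^{n-1} = \omega_\h^{n-1} + (n-1)\,\omega_\h^{n-2}\wedge U^\flat\wedge(JU)^\flat.
\]
Evaluating $d\omega^{n-1}$ on adapted $(2n-1)$-tuples: on tuples with a single $\h^\perp$-factor, the first piece produces multiples of $\mathrm{tr}(A)$ or $\mathrm{tr}(B)$ via the derivation action of $A,B$ on the top form of $\h$, which vanish by unimodularity; on tuples $(U, JU, Y_1,\ldots,Y_{2n-3})$ with $Y_i\in\h$, the first piece yields $-\omega_\h^{n-1}(V,Y_1,\ldots,Y_{2n-3})$ which vanishes under $V=[U,JU]=0$, and the second piece yields $(n-1)\,d\omega_\h^{n-2}(Y_1,\ldots,Y_{2n-3})$. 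Hence $d\omega^{n-1}=0$ on $\g$ is equivalent to $d\omega_\h^{n-2}=0$ on $\h$, i.e.\ balancedness of $(J_\h,\langle\cdot,\cdot\rangle_\h)$.

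For (iv), the restriction of $dd^c\omega$ to $\Lambda^4\h$ coincides with $dd^c\omega_\h$ computed intrinsically in $(\h, J_\h, \langle\cdot,\cdot\rangle_\h)$, because $\h$ is a subalgebra and $J\h=\h$; consequently $(\h, J_\h, \langle\cdot,\cdot\rangle_\h)$ is itself SKT, and the Arroyo–Nicolini theorem cited in the introduction forces $\h$ to be at most $2$-step nilpotent. For the conditions on $A_{\mathfrak z(\h)}, B_{\mathfrak z(\h)}$, I would evaluate $dd^c\omega$ on the special $4$-tuples $(U, JU, z, w)$ and $(U, z, z', w)$, $(JU, z, z', w)$ with $z,z',w\in\mathfrak z(\h)$. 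Using $[\mathfrak z(\h), \h]=0$, $[U, z]=Az$, $[JU, z]=Bz$, the SKT equation on these tuples reduces to bilinear identities that force $A_{\mathfrak z(\h)}, B_{\mathfrak z(\h)}$ to be skew-symmetric on $\mathfrak z(\h)$ and to commute with $J_{\mathfrak z(\h)}$. The main obstacle is precisely this last step: organising the expansion of $dd^c\omega$ in the adapted frame, separating the contributions from $A, B, \mu$ and $V$, and verifying that on the chosen $4$-tuples no cross-terms from the remaining bracket structure survive to spoil the clean conditions on $\mathfrak z(\h)$.
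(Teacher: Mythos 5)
Parts (i)--(iii) of your proposal are correct and essentially follow the paper's route. Part (i) is the same Nijenhuis computation. Part (iii) is the same decomposition of $\omega^{n-1}$; your direct use of $\tr(A)=\tr(B)=0$ (from $\tr(ad_U)=\tr(A)$ and unimodularity) replaces the paper's equivalent observation that every $(2n-1)$-form on a unimodular Lie algebra is closed, and both give $A^*\omega_\h^{n-1}=B^*\omega_\h^{n-1}=0$. Part (ii) is a legitimate variant: you phrase the argument through the canonical bundle, showing $\bar\partial\Omega=\alpha\wedge\Omega$ with $\alpha$ proportional to $\bar\zeta^n$, whereas the paper works with the trace form $\eta^{Ch}=\frac12(\tr(A\circ J_\h)-\tr B)\,u+\frac12(\tr(B\circ J_\h)+\tr A)\,Ju$; in both cases the point is that the relevant $1$-form is supported on the closed directions $u,Ju$, and the vanishing on $\h$ comes from nilpotency of $ad_{W}|_{\h}$ for $W\in\h$.

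The gap is in the second half of (iv), and you have flagged it yourself as ``the main obstacle''. The conditions $A_{\mathfrak{z}(\h)},B_{\mathfrak{z}(\h)}\in\mathfrak{so}(\mathfrak{z}(\h))$ do not follow from ``bilinear identities'' read off individual $4$-tuples: for a fixed central $Z$ the equation $dc(JZ,Z,U,JU)=0$ is a single quadratic relation in the entries of $A$ and $B$ and by itself does not force skew-symmetry; moreover the tuples $(U,z,z',w)$ and $(JU,z,z',w)$ with $z,z',w$ all central carry essentially no information, since all brackets among central elements vanish. The paper's mechanism is a two-stage positivity argument. First one sums $dc(Je_j,e_j,U,JU)=0$ over an orthonormal basis of $\mathfrak{z}(\h)$, uses $[A_{\mathfrak{z}(\h)},B_{\mathfrak{z}(\h)}]=ad_V|_{\mathfrak{z}(\h)}=0$ to kill the mixed trace term, and obtains $\norm{A_{\mathfrak{z}(\h)}}^2+\trace(A_{\mathfrak{z}(\h)}^2)+\norm{B_{\mathfrak{z}(\h)}}^2+\trace(B_{\mathfrak{z}(\h)}^2)=0$; the identity $\norm{A}^2+\trace(A^2)=2\sum_i a_{ii}^2+\sum_{i<j}(a_{ij}+a_{ji})^2\ge 0$ then yields skew-symmetry. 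Second, with skew-symmetry in hand, the individual components $dc(e_{2j-1},Je_{2j-1},U,JU)$ become sums of non-positive squares, so each square vanishes, giving $a_{2i,2j-1}=-a_{2i-1,2j}$ and $b_{2i,2j-1}=-b_{2i-1,2j}$; only after substituting these into the linear relations coming from the integrability condition of (i) does one obtain $[A_{\mathfrak{z}(\h)},J_{\mathfrak{z}(\h)}]=[B_{\mathfrak{z}(\h)},J_{\mathfrak{z}(\h)}]=0$. As written, your proposal establishes the $2$-step nilpotency of $\h$ but not the stated conditions on $A_{\mathfrak{z}(\h)}$ and $B_{\mathfrak{z}(\h)}$.
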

\begin{proof}
Using that the Nijenhuis tensor $N$ of $J$ satisfies the condition $N(J \cdot,J \cdot)=-N(\cdot,\cdot)$, one can see that it is enough to check the vanishing of $N(U,Y)$ and $N(Y,Z)$ for  every $Y,Z \in \h$. By a direct computation we have that $N(Y,Z) = N_{J_{\h}}(Y,Z)$ and 
$$
N(U,Y)= [U,Y]+J_\h([JU,Y]+[X,J_\h Y])-[JU,J_\h Y]
= (A+J_\h AJ_\h+J_\h B-BJ_\h) Y,
$$
from which \emph{(i)} follows. To prove \emph{(ii)}, following \cite{VZ} (see also \cite[Formula 5.8] {FP}), we use that the Ricci form of the Chern Connection $\rho^{Ch}$ is given by $d\eta^{Ch}$, where 
\[
\eta^{Ch}(Y)=\frac{1}{2} \big( \tr(ad_Y \circ J)- \tr ad_{JY} \big),  \ \forall Y \in \g.
\]
If $Y \in \h$, then $\eta^{Ch}(Y)=\eta^{Ch}_{\h}(Y)=0$, since $\h$ is nilpotent (a proof can be found in \cite[Proposition 2.1]{LRV}).
Furthermore, since
\[
\begin{split}
\eta^{Ch}(U)&=\frac{1}{2} \big( \tr(ad_U \circ J)- \tr ad_{JU} \big)=\frac{1}{2} \big( \tr(A \circ J_\h)- \tr B \big)\\
\eta^{Ch}(JU)&=\frac{1}{2} \big( \tr(ad_{JU} \circ J)+ \tr ad_{U} \big)=\frac{1}{2} \big( \tr(B \circ J_\h)+ \tr A \big), \\
\end{split}
\]
we obtain
\[
\eta^{Ch}=\frac{1}{2} \big( \tr(A \circ J_\h)- \tr B \big) u + \frac{1}{2} \big( \tr(B \circ J_\h)+ \tr A \big) Ju,
\]
where  $\{ u,  Ju \}$  is the dual basis of $\{ U,JU \}$.  Then \emph{(ii)}  follows by differentiating $\eta^{Ch}$, since $du=d(Ju)=0$.  \\
To prove \emph{(iii)}, we observe that the orthogonal splitting $\g=\h \oplus \h^\perp$, implies that the fundamental form $\omega$ of $(J, \langle \cdot, \cdot \rangle)$ can be written as $\omega=\omega_\h + u \wedge Ju$. Its $(n-1)$ power is then given by $\omega^{n-1}=\omega_\h^{n-1}+(n-1) \omega^{n-2}\wedge u \wedge Ju$. We first observe that  for any $\alpha \in {\bigwedge}^k \h^*$, 
\begin{equation} \label{eqn:differential}
d\alpha=u \wedge A^* \alpha + Ju \wedge B^* \alpha - u \wedge  Ju \wedge \iota_V \alpha+d_\h \alpha,
\end{equation}
where 
\[C^*\gamma=-[\gamma(C\cdot,\dots,\cdot)+\dots+\gamma(\cdot,\dots,C\cdot)], \quad \forall \gamma \in {\bigwedge}^k \h^*, \, C \in \mathfrak{gl} (\h),\]
and $d_\h$ stands for the exterior differential of the nilpotent Lie algebra $(\h, \mu)$.\\
Exploiting that $du=dJu=0$ and that $V=[U,JU]=0$, we get
\[
d\omega^{n-1}=d \omega_\h^{n-1}+(n-1) d_\h \omega^{n-2}\wedge u \wedge Ju.
\] 
We claim that $d \omega_\h^{n-1}=0$. Assume by contradiction that $d\omega_\h^{n-1}=u \wedge A^* \omega_\h^{n-1}+ Ju \wedge B^* \omega_\h^{n-1} \neq 0$, hence, at least one between  $A^* \omega_\h^{n-1}$ and $B^* \omega_\h^{n-1} $ is non-zero. If the first holds, then one may consider $d (\omega_\h^{n-1} \wedge Ju)= u \wedge A^* \omega_\h^{n-1} \wedge Ju \neq 0$. Moreover, this provides a contradiction: since $\g$ is unimodular, any $2n-1$ form on $\g$ is closed. The other case proceeds in the same way by considering  $d (\omega_\h^{n-1} \wedge u)$. Therefore, $d\omega^{n-1}=(n-1) d_\h \omega^{n-2}\wedge u \wedge Ju$ is zero if and only if $d_\h \omega^{n-2}=0$.\\
 The first part of \emph{(iv)} follows from \cite{ArNic}. 
Indeed,  if the Hermitian Lie algebra $(\g, J, \langle \cdot, \cdot \rangle)$ is SKT, then $(\h, J_\h,  \langle \cdot, \cdot \rangle_\h)$ is SKT (see \cite [Proposition 3.1]{ArNic}). 
In particular,  $(\h, J_\h,  \langle \cdot, \cdot \rangle_\h)$ is at most  $2$-step nilpotent (\cite[Theorem 4.8]{ArNic}).  Since, $(\h, \langle \cdot,\cdot \rangle_\h, J_\h)$ is SKT and at most $2$-step nilpotent we have the orthogonal decomposition $\h =\mathfrak{z}(\h) \oplus \mathfrak{z}(\h)^\perp$, where $\mathfrak{z} (\h)$ is the center of $\h$. Observe that each summand in the decomposition $\h=\mathfrak{z}(\h) \oplus \mathfrak{z}(\h)^\perp$ is $J_\h$-invariant by \cite[Proposition 3.5]{EFV}. \\
Hence, $J_\h$ is determined by $J_{\mathfrak{z}(\h)}$ and $J_{\mathfrak{z}(\h)^\perp}$. Furthermore, since $A,B \in Der(\h)$, then they must preserve the center. Indeed, if $Z \in \mathfrak{z}(\h)$, then 
\[
0=A \big(\mu(Z,Y)\big)=\mu(AZ,Y)+\mu(Z,AY)=\mu(AZ,Y), \ \forall Y \in \h,
\] 
from which follows that $AZ \in \mathfrak{z}(\h)$, and analogously for $B$. With respect to the decomposition $\h=\mathfrak{z}(\h) \oplus \mathfrak{z}(\h)^\perp$ we have that 
\[
A=\begin{pmatrix} 
A_{\mathfrak{z}(\h)} & \ast_A \\
0 & A_{\mathfrak{z}(\h)^\perp}
\end{pmatrix} \ \ \text{and} \ \ 
B=\begin{pmatrix}
B_{\mathfrak{z}(\h)} & \ast_B \\
0 & B_{\mathfrak{z}(\h)^\perp}
\end{pmatrix}.
\]
In particular, $\mathfrak{z}(\h)$ is a $J$-invariant ideal of $\g$ and the integrability condition involving $A,B$ and $J_\h$ reads $A_{\mathfrak{z}(\h)}+J_{\mathfrak{z}(\h)} A_{\mathfrak{z}(\h)}J_{\mathfrak{z}(\h)}+J_{\mathfrak{z}(\h)} B_{\mathfrak{z}(\h)}-B_{\mathfrak{z}(\h)}J_{\mathfrak{z}(\h)}=0$ on $\mathfrak{z}(\h)$. \\
 Let $c$ be the  Bismut torsion $3$-form of $(J, \langle \cdot,\cdot \rangle)$. Then, for any $Z \in \mathfrak{z}(\h)$  using the formula for the $dc$ in \cite{DF} (see also \cite[Formula 3] {ArNic}) we get 
\begin{align} \label{eqn:expression1} 
dc(JZ,Z,U,JU)=& \norm{ AJZ}^2 +\norm{ BJZ}^2  +\norm{ AZ}^2 +\norm{ BZ}^2 \\  
 &-\langle AJAJZ,Z\rangle-\langle JAJAZ,Z\rangle -\langle BJBJZ,Z\rangle-\langle JBJBZ,Z\rangle. \nonumber 
\end{align}
By  \emph{(i)}, this is equivalent to 
\begin{align*}  
dc(JZ,Z,X,JX)=& \norm{ AJZ}^2 +\norm{ BJZ}^2  +\norm{ AZ}^2 +\norm{ BZ}^2 \\  
 &+\langle \big( 2(A^2+B^2+AJ_\h B -BJ_\h A) -J_\h [A,B] -[A,B] J_\h \big) Z,Z\rangle.
 \end{align*}
Moreover, since $[A,B]=ad_{V} \vert_{ \h}$, and $Z$ is in the center of $\h$, $[A,B] Z=0$. Analogously, since $\mathfrak{z}(\h)$ is $J_\h$ invariant, also $[A,B] J_\h Z=0$. Hence,
\begin{align*} 
dc(JZ,Z,X,JX)=& \norm{ AJZ}^2 +\norm{ BJZ}^2  +\norm{ AZ}^2 +\norm{ BZ}^2 \\  
 &+\langle \big( 2(A^2+B^2+AJ_\h B -BJ_\h A) \big) Z,Z\rangle.
 \end{align*}
 Hence, the SKT condition yields
 \begin{equation} \label{eqn:sktcondition}
 \norm{ AJZ}^2 +\norm{ BJZ}^2  +\norm{ AZ}^2 +\norm{ BZ}^2 +\langle \big( 2(A^2+B^2+AJ_\h B -BJ_\h A) \big) Z,Z\rangle=0
 \end{equation}
 for any $Z \in \mathfrak{z}(\h)$. Let $\{e_1,\dots,e_{2r}\}$ be any orthonormal basis of $\mathfrak{z}(\h)$.  Without loss of generality, we may assume that $Je_{2j-1}=e_{2j}$ for each $j=1,\dots,r$. Using \eqref{eqn:sktcondition}, we get that $\sum_{j=1}^{2r}dc(Je_j,e_j,X,JX)$ is equal to
\begin{equation*}
\begin{split} 
&\sum_{j=1}^{2r} \norm{ AJ e_j}^2 +\norm{ BJ e_j}^2  +\norm{ Ae_j}^2 + \norm{Be_j}^2 +\langle \big( 2(A^2+B^2+AJ_\h B -BJ_\h A) \big) e_j,e_j\rangle= \\
 &2\big(\norm{A_{\mathfrak{z}(\h)}}^2+\norm{B_{\mathfrak{z}(\h)}}^2+\trace(A_{\mathfrak{z}(\h)}^2)+\trace(B_{\mathfrak{z}(\h)}^2) +\trace(A_{\mathfrak{z}(\h)}J_{\mathfrak{z}(\h)}B_{\mathfrak{z}(\h)}-B_{\mathfrak{z}(\h)}J_{\mathfrak{z}(\h)}A_{\mathfrak{z}(\h)})\big).
\end{split}
\end{equation*}
Furthermore, since $[A_{\mathfrak{z}(\h)},B_{\mathfrak{z}(\h)}]=[A,B]_{\mathfrak{z}(\h)}=0$, then $\trace(A_{\mathfrak{z}(\h)}J_{\mathfrak{z}(\h)}B_{\mathfrak{z}(\h)}-B_{\mathfrak{z}(\h)}J_{\mathfrak{z}(\h)}A_{\mathfrak{z}(\h)})=0$.\\
Hence, the SKT condition implies that
\begin{equation}\label{eqn:trace}
\norm{A_{\mathfrak{z}(\h)}}^2+\norm{B_{\mathfrak{z}(\h)}}^2+\trace(A_{\mathfrak{z}(\h)}^2)+\trace(B_{\mathfrak{z}(\h)}^2) =0.
\end{equation}
We claim $\norm{A_{\mathfrak{z}(\h)}}^2+\trace(A_{\mathfrak{z}(\h)}^2)\ge0$ and $\norm{B_{\mathfrak{z}(\h)}}^2+\trace(B_{\mathfrak{z}(\h)}^2)\ge0$. Indeed, 
\[
\norm{A_{\mathfrak{z}(\h)}}^2+\trace(A_{\mathfrak{z}(\h)}^2)=2\mathlarger{\mathlarger{\sum}}_{i=1}^{2r} \abs{a_{ii}}^2+\mathlarger{\mathlarger{\sum}}_{i<j} (\abs{a_{ij}}^2+ \abs{a_{ji}}^2)+ \mathlarger{\mathlarger{\sum}}_{i<j} 2({a_{ij}}\cdot {a_{ji}})=2\mathlarger{\mathlarger{\sum}}_{i=1}^{2r} \abs{a_{ii}}^2+\mathlarger{\mathlarger{\sum}}_{i<j}(a_{ij}+a_{ji})^2.
\]
The same argument holds for $B_{\mathfrak{z}(\h)}$.  
More specifically, \eqref{eqn:trace} implies that if $(\g, J, \langle \cdot,\cdot \rangle)$ is SKT, then $A_{\mathfrak{z}(\h)}$ and $B_{\mathfrak{z}(\h)}$ are skew-symmetric matrices with respect to any orthonormal basis of $(\frak h, J_{\frak h}, \langle \cdot, \cdot \rangle_{\frak h})$. In fact, the choice of the orthonormal basis does not affect the previous computations. This proves that $A_{\mathfrak{z}(\h)}$ and $B_{\mathfrak{z}(\h)}$ are in $\mathfrak{so}(\h)$. \\
With respect to  $\{e_1,\dots,e_{2r}\}$  we may write $A_{\mathfrak{z}(\h)}$ and $B_{\mathfrak{z}(\h)}$ using $2\times2$ block matrices $A_{ij}$ and $B_{ij}$ as follows
\[ 
A_{\mathfrak{z}(\h)}=\begin{pmatrix} 
    A_{1,1} & \dots  & A_{1,r}\\
    \vdots & \ddots & \vdots\\
    A_{r,1} & \dots  & A_{r,r} 
    \end{pmatrix},
  \quad
B_{\mathfrak{z}(\h)}=\begin{pmatrix}
   B_{1,1} & \dots  & B_{1,r}\\
    \vdots & \ddots & \vdots\\
    B_{r,1} & \dots  & B_{r,r} 
\end{pmatrix},
\]
where, for   $ i =1, \ldots, r$,
\begin{align*} 
A_{i,i}=&\begin{pmatrix} 
    0 &  a_{2i-1,2i}\\
    -a_{2i-1,2i} & 0\\
    \end{pmatrix},
\quad 
B_{i,i}=\begin{pmatrix} 
    0 &  b_{2i-1,2i}\\
    -b_{2i-1,2i} & 0\\
    \end{pmatrix},
\end{align*}
and,  for $i<j$, 
\begin{align*} 
A_{i,j}=\begin{pmatrix} 
\begin{array}{ll}
a_{2i-1,2j-1}   &a_{2i-1,2j} \\
a_{2i,2j-1}      &a_{2i,2j}\\
\end{array}
\end{pmatrix}, \ A_{j,i}=-(^t A_{i,j}), \
B_{i,j}=\begin{pmatrix} 
\begin{array}{ll}
b_{2i-1,2j-1}   &b_{2i-1,2j} \\
b_{2i,2j-1}      &b_{2i,2j}\\
\end{array}
 \end{pmatrix}, \ B_{j,i}=-(^tB_{i,j}).
  \end{align*}
Moreover, since we choose $\{e_1,\dots, e_{2r}\}$ satisfying $Je_{2j-1}=e_{2j}$ for each $j=1,\dots,r$,  with respect to such a basis the complex structure $J_{\mathfrak{z}(\h)}$ can be represented by the diagonal block matrix $\operatorname{diag}(\Lambda_1,\dots,\Lambda_{r})$ with
\[ 
\Lambda_i=\begin{pmatrix} 
    0 &  -1\\
    1& 0\\
    \end{pmatrix}.
\]
If one imposes the integrability condition $A_{\mathfrak{z}(\h)}+J_{\mathfrak{z}(\h)} A_{\mathfrak{z}(\h)}J_{\mathfrak{z}(\h)}+J_{\mathfrak{z}(\h)} B_{\mathfrak{z}(\h)}-B_{\mathfrak{z}(\h)}J_{\mathfrak{z}(\h)}=0$ then the following linear conditions hold:
\begin{equation}\label{eqn:integrability_cond}
\begin{cases}
a_{2i-1,2j-1}=a_{2i,2j}+b_{2i,2j-1}+b_{2i-1,2j}\\
b_{2i,2j}=a_{2i-1,2j}+a_{2i,2j-1}+b_{2i-1,2j-1}.\\
\end{cases}
\end{equation}
We compute the components $dc(e_{2j-1},Je_{2j-1},X,JX)$, using \eqref{eqn:expression1}. With a  straightforward computation one gets
\begin{equation}\label{eqn:expression2}
\begin{split}
dc(e_{2j-1},Je_{2j-1},X,JX)= &- 2  \mathlarger{\mathlarger{\sum}}_{k=1}^{j-1} (b_{2k,2j-1}+b_{2k-1,2j})^2+(a_{2k-1,2j}+a_{2k,2j-1})^2 \\
& -2 \mathlarger{\mathlarger{\sum}}_{k=j+1}^{r-1} (b_{2j,2k-1}+b_{2j-1,2k})^2+(a_{2j-1,2k}+a_{2j,2k-1})^2.
\end{split}
\end{equation}
Repeating the same argument for each $j=1,\dots,r$, the vanishing of \eqref{eqn:expression2} leads to the identities $b_{2i,2j-1}=-b_{2i-1,2j}$ and 
$a_{2i,2j-1}=-a_{2i-1,2j}.$
Moreover, plugging these identities in \eqref{eqn:integrability_cond} we get
\[
\begin{cases}
a_{2i,2j}=a_{2i-1,2j-1}\\
b_{2i,2j}=b_{2i-1,2j-1}.\\
\end{cases}
\]
Hence we have that for any $i < j$,  the matrices $A_{ij}$ and $B_{ij}$ are of the kind
\[ 
A_{ij}=\begin{pmatrix}
a_{2i-1,2j-1} & a_{2i-1,2j} \\
-a_{2i-1,2j} &a_{2i-1,2j-1} \\
\end{pmatrix}, \
B_{ij}=\begin{pmatrix}
b_{2i-1,2j-1} & b_{2i-1,2j} \\
-b_{2i-1,2j} &b_{2i-1,2j-1} \\
\end{pmatrix},
\] 
and it is straightforward to observe that $[A_{\mathfrak{z}(\h)},J_{\mathfrak{z}(\h)}]=[B_{\mathfrak{z}(\h)},J_{\mathfrak{z}(\h)}]=0$. 
\end{proof}
\begin{remark}  If the Hermitian Lie algebra $(\g, \langle \cdot, \cdot \rangle, J)$ is K\"ahler, then $\h$ is abelian. Exploiting \eqref{eqn:differential}, since the fundamental form $\omega$ splits as the sum $\omega_\h + u \wedge Ju$, we get that if  $(\langle \cdot, \cdot \rangle, J)$ is K\"ahler, then
$0=d \omega= u \wedge A^* \omega_\h  + Ju \wedge B^* \omega_\h  - u\wedge  Ju \wedge \iota_V \omega_\h +d_\h \omega_\h$, which implies that $d_\h \omega_\h=0$. Moreover, since $\h$ is nilpotent, then it must be abelian. 
\end{remark}
\begin{corollary} 
Let $\g$ be a unimodular solvable Lie algebra with codimension $2$ nilradical $\h$. Assume that $\g$ is endowed with a complex structure $J$ such that $J\h=\h$. If $\g$ admits a $J$-Hermitian SKT metric $\langle \cdot,	\cdot \rangle_1$ and a $J$-Hermitian balanced metric $\langle \cdot,	\cdot \rangle_2$ such that $[\h^{\perp_{\langle \cdot,	\cdot \rangle_2}}, \h^{\perp_{\langle \cdot,	\cdot \rangle_2}}]=0$, then $\g$ admits also a K\"ahler metric. 
\end{corollary}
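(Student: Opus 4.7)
The strategy is to deduce from the two hypotheses that the nilradical is abelian, via the Fino-Vezzoni theorem applied to $(\h, J_\h)$, and then to assemble a K\"ahler metric on $\g$ by combining the two given metrics.

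First, I transfer the hypotheses to $\h$. The SKT metric $\langle \cdot,\cdot\rangle_1$ restricts to an SKT metric on $\h$ by \cite[Proposition 3.1]{ArNic} (as already used in the proof of Theorem \ref{theorem:sktinvariant}(iv)). For the balanced metric $\langle \cdot,\cdot\rangle_2$, if $U_2$ is a unit vector of $\h^{\perp_{\langle\cdot,\cdot\rangle_2}}$, the hypothesis $[\h^{\perp_{\langle\cdot,\cdot\rangle_2}},\h^{\perp_{\langle\cdot,\cdot\rangle_2}}]=0$ reads $[U_2,JU_2]=0$, and Theorem \ref{theorem:sktinvariant}(iii) gives that $\langle\cdot,\cdot\rangle_2|_\h$ is balanced on $\h$. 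Thus $(\h,J_\h)$ is a nilpotent complex Lie algebra admitting both an SKT and a balanced Hermitian inner product, so by the Fino-Vezzoni theorem for nilmanifolds \cite{FV} it admits a K\"ahler inner product; by the classical Benson-Gordon/Hasegawa theorem, $\h$ is then abelian.

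Now $\mathfrak{z}(\h)=\h$, so Theorem \ref{theorem:sktinvariant}(iv) applied to $\langle\cdot,\cdot\rangle_1$ gives that $A_1:=\ad_{U_1}|_\h$ and $B_1:=\ad_{JU_1}|_\h$ lie in $\mathfrak{u}(\h, J_\h, \langle\cdot,\cdot\rangle_1|_\h) \subseteq \mathfrak{sp}(\h, \omega_1|_\h)$. I would then define a Hermitian metric $\langle\cdot,\cdot\rangle_3$ on $\g$ by declaring $\h \perp \h^{\perp_{\langle\cdot,\cdot\rangle_2}}$, putting $\langle\cdot,\cdot\rangle_1|_\h$ on $\h$, and making $\{U_2, JU_2\}$ orthonormal on $\h^{\perp_{\langle\cdot,\cdot\rangle_2}}$. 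Writing $U_2 = aU_1+bJU_1+Z$ with $a,b\in\R$, $Z\in\h$, abelianness of $\h$ yields $A_2 := \ad_{U_2}|_\h = aA_1+bB_1$ and $B_2 := \ad_{JU_2}|_\h = -bA_1+aB_1$, both of which remain in $\mathfrak{sp}(\h, \omega_1|_\h)$, so $A_2^*(\omega_1|_\h) = B_2^*(\omega_1|_\h)=0$. Combined with $V_2=[U_2,JU_2]=0$ and $d_\h=0$, formula \eqref{eqn:differential} applied to $\omega_3 = \omega_1|_\h + u_2 \wedge Ju_2$ immediately gives $d\omega_3=0$, so $\langle\cdot,\cdot\rangle_3$ is K\"ahler.

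The main obstacle is really this final step. Neither input metric is by itself K\"ahler: the SKT one fails because $V_1=[U_1,JU_1]$ need not vanish, while for the balanced one the operators $A_2, B_2$ need not preserve $\omega_2|_\h$. The point that makes the construction work is that once $\h$ is abelian, $\ad_U|_\h$ depends only on the class of $U$ in $\g/\h$; hence the unitarity of $A_1, B_1$ with respect to $\langle\cdot,\cdot\rangle_1|_\h$ is automatically inherited by the $A_2, B_2$ attached to the balanced complement, and this compatibility is precisely what is needed in order to close the fundamental form of the composite metric.
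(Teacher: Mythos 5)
Your proof is correct, and while the reduction to $\h$ is identical to the paper's (restrict $\langle\cdot,\cdot\rangle_1$ to get an SKT inner product on $\h$ via \cite[Proposition 3.1]{ArNic}, use statement \emph{(iii)} of Theorem \ref{theorem:sktinvariant} together with unimodularity and $[U_2,JU_2]=0$ to get a balanced one, then conclude $\h$ is abelian from \cite{FV}), your final step genuinely diverges. The paper finishes by observing that $\g$ now has a $J$-invariant abelian ideal of codimension $2$ and admits both SKT and balanced metrics, and then invokes the resolution of the Fino--Vezzoni conjecture in that setting (\cite{CZ,GZ}) as a black box. You instead build the K\"ahler metric explicitly: Theorem \ref{theorem:sktinvariant} \emph{(iv)} (with $\mathfrak{z}(\h)=\h$) puts $A_1,B_1$ in $\mathfrak{so}(\h)$ commuting with $J_\h$, abelianness of $\h$ makes $A_2,B_2$ linear combinations of $A_1,B_1$ and hence still $\omega_1\vert_\h$-preserving, and \eqref{eqn:differential} with $V_2=0$ and $d_\h=0$ closes the composite form. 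This is sound (it is in effect the same complement-changing trick as in the proof of Corollary \ref{thm:abelian} \emph{(iv)}, with the balanced hypothesis handing you the abelian complement for free), and it buys self-containedness plus an explicit K\"ahler metric, at the cost of a slightly longer argument; the paper's citation is shorter but leans on a substantial external theorem. Your correct key observation — that $\operatorname{ad}_U\vert_\h$ depends only on $U$ modulo $\h$ once $\h$ is abelian, so the unitarity of $A_1,B_1$ transfers to the new complement — is exactly what makes the shortcut work.
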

\begin{proof}
Since $\g$ is unimodular and $[\h^{\perp_{\langle \cdot,\cdot \rangle_2}}, \h^{\perp_{\langle \cdot,	\cdot \rangle_2}}]=0$, by Theorem \ref{theorem:sktinvariant} statement \emph{(iii)} we have that $(\h,J)$ admits a balanced metric. Furthermore, by the proof of statement \emph{(iv)} in Theorem  \ref{theorem:sktinvariant}, $(\h, J)$ admits also a SKT metric, and so $\h$ it is abelian by  \cite{FV}. Since $\g$  contains an abelian ideal of codimension $2$ which is $J$-invariant, then $(\g, J)$ admits also a K\"ahler metric by \cite{CZ, GZ}. 
\end{proof}
As a corollary of Theorem \ref{theorem:sktinvariant} we focus now on the case of $\h$ being abelian. Observe that in this case $J_\h$ is trivially integrable, so the integrability of $J$ simply reduces to  the condition $A+J_\h AJ_\h+J_\h B-BJ_\h=0$.  In \cite{GZ}, Hermitian structures on Lie algebras with a $J$-invariant abelian ideal of codimension $2$ are investigated. The next Corollary shows that when the ideal coincides with the nilradical of the Lie algebra, the SKT condition has a more specialized characterization.
\begin{corollary} \label{thm:abelian}
Let  $(\g,  J, \langle \cdot, \cdot \rangle)$ be an  almost Hermitian solvable Lie algebra  with  a  $J$-invariant codimension 2  abelian nilradical $\frak h$ and let $\{U, JU \}$ be an orthonormal basis of $(\frak h^{\perp},  \langle \cdot, \cdot \rangle_{ \h^{\perp}})$. 
Then $(J,  \langle \cdot,\cdot \rangle)$ is SKT if and only if one of the following condition holds\\
\begin{enumerate} [label=\roman{*}.,ref=(\roman{*}), resume] 
\item[(i)]  $A:=ad_{U} \vert_{\frak h},B:=ad_{JU} \vert_{\h} \in \mathfrak{so}(\h)$ and $[A,J_\h]=[B,J_\h]=0$; \\
\item[(ii)] there exists an orthonormal basis $\{e_1,\dots,e_{2n-2}\}$ of $(\h, \langle \cdot, \cdot \rangle_{\h})$ with respect to which 
\[
J_\h=\operatorname{diag}(\Lambda_1,\dots,\Lambda_{n-1}), \ A=\operatorname{diag}(A_1,\dots,A_{n-1}), \ B=\operatorname{diag}(B_1,\dots,B_{n-1}),
\]
where 
\begin{equation} \label{normalforms}
\Lambda_i=\begin{pmatrix}
0 & -1 \\
1 & 0 \\
\end{pmatrix}, \ 
A_i=\begin{pmatrix}
0 & a_i \\
-a_i & 0 \\
\end{pmatrix} \ \text{and} \  B_i=\begin{pmatrix}
0 & b_i \\
-b_i & 0 \\
\end{pmatrix},
\end{equation}
with $a_i,b_i \in \R$.
\end{enumerate}
Moreover, if  the Hermitian structure $(J,  \langle \cdot,\cdot \rangle)$ is SKT, then \\
\begin{enumerate} [label=\roman{*}.,ref=(\roman{*}), resume] 
\item[(iii)]  $(J, \langle \cdot,\cdot \rangle)$ is K\"ahler if and only if $[\h^\perp,\h^\perp]=0$;\\
\item[(iv)]  if $\mathfrak{z} (\g) \cap \h =\{0\},$ then $\g$ admits also a K\"ahler metric.
\end{enumerate}
\end{corollary}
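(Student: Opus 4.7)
The plan is to treat (i)--(iv) in sequence. For (i), the forward implication is immediate from Theorem~\ref{theorem:sktinvariant}(iv), since $\mathfrak{z}(\h)=\h$ when $\h$ is abelian. For the converse I intend to verify $dd^c\omega=0$ by a direct computation in the bigrading induced by $\g=\h\oplus\h^\perp$. Writing $\omega=\omega_\h+u\wedge Ju$ and using \eqref{eqn:differential} with $d_\h\equiv 0$, one gets
\[
d\omega=u\wedge A^*\omega_\h+Ju\wedge B^*\omega_\h-u\wedge Ju\wedge \iota_V\omega_\h,
\]
and the combined hypotheses $A^T=-A$ and $[A,J_\h]=0$ yield $A^*\omega_\h(Y,Y')=\langle [A,J_\h]Y,Y'\rangle=0$, and similarly for $B$. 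Applying $J^*$ gives $d^c\omega=u\wedge Ju\wedge v^\flat$ with $v^\flat=\langle V,\cdot\rangle$; since $\h$ is abelian the brackets force $du=d(Ju)=0$ and also the vanishing of the $\bigwedge^2\h^*$-component of $dv^\flat$, so the final wedge $u\wedge Ju\wedge dv^\flat$ is zero.

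For (ii), the conditions in (i) place $A$ and $B$ inside the unitary subalgebra $\mathfrak{u}(\h,J_\h)$, and they commute because $[A,B]=\ad_V\vert_\h=0$ when $\h$ is abelian. Simultaneous unitary diagonalization of commuting skew-Hermitian endomorphisms then yields a $J_\h$-adapted orthonormal basis in which $J_\h$, $A$ and $B$ take the block-diagonal form \eqref{normalforms}. Part (iii) follows at once from the calculation performed in (i): under SKT we have $A^*\omega_\h=B^*\omega_\h=0$, so $d\omega=-u\wedge Ju\wedge \iota_V\omega_\h$, and non-degeneracy of $\omega_\h$ collapses $d\omega=0$ to $V=[U,JU]=0$, i.e.\ $[\h^\perp,\h^\perp]=0$.

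For (iv), the key identification is $\mathfrak{z}(\g)\cap\h=\ker A\cap \ker B$, read directly from the brackets $[U,Y]=AY$ and $[JU,Y]=BY$. In the normal form of (ii), the endomorphism $B-AJ_\h$ restricts on each block $\h_i$ to a matrix of determinant $a_i^2+b_i^2$, so the hypothesis $\mathfrak{z}(\g)\cap\h=\{0\}$ is equivalent to the surjectivity of $B-AJ_\h$. I then pick $W\in\h$ solving $(B-AJ_\h)W=V$ and pass to the new $J$-invariant complement spanned by $\tilde U=U-W$ and $J\tilde U=JU-JW$. Because $\h$ is abelian, $\ad_{\tilde U}\vert_\h=A$ and $\ad_{J\tilde U}\vert_\h=B$ are unchanged, while a short expansion gives $[\tilde U,J\tilde U]=V-(B-AJ_\h)W=0$. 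Declaring $\{\tilde U,J\tilde U,e_1,\dots,e_{2n-2}\}$ orthonormal defines a new $J$-Hermitian inner product on $\g$; the conditions of (i) remain satisfied, so the new metric is still SKT, and the vanishing of $[\h^\perp,\h^\perp]$ for the new complement upgrades it to K\"ahler via (iii). The only non-routine step I foresee is the bigrading verification of the \emph{if} direction in (i); the remaining arguments reduce to linear algebra once the normal form is in place.
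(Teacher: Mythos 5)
Your proposal is correct and follows essentially the same route as the paper's proof: Theorem~\ref{theorem:sktinvariant}(iv) for the forward direction of (i), the computation $d\omega=-u\wedge Ju\wedge\iota_V\omega_\h$ from the vanishing of $A^*\omega_\h$ and $B^*\omega_\h$ for the converse and for (iii), simultaneous normal forms of the commuting skew-symmetric operators $A,B,J_\h$ for (ii), and the invertibility of $B-AJ_\h$ together with a shift of the complement of $\h$ for (iv). The only slip is a sign in (iv): with $\tilde U=U-W$ one gets $[\tilde U,J\tilde U]=V+(B-AJ_\h)W$, so you should solve $(B-AJ_\h)W=-V$ (or take $\tilde U=U+W$ as in the paper); since $B-AJ_\h$ is invertible this is harmless.
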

\begin{proof} 
One direction of \emph{(i)} follows by Theorem \ref{theorem:sktinvariant}.  Indeed,  if $( \langle \cdot,\cdot \rangle, J)$ is SKT and  $\h$ is abelian, then $A, B \in \mathfrak{so}(\h)$ and $[A,J_\h]=[B,J_{\h}]=0$. Let us prove the converse. We have already observed that $\omega=\omega_\h + u\wedge Ju$. Then, 
\[
d\omega=u \wedge A^*\omega_\h + Ju \wedge B^*\omega_\h - u\wedge  Ju \wedge \iota_V \omega_\h,
\] 
with  $V = [U, JU]$, $A^*\omega_\h (Y,W)=  -\langle (J_\h A+ A^tJ_\h )Y, W \rangle_\h$, and $B^*\omega_\h (Y,W)=  -\langle (J_\h B+ B^tJ_\h )Y, W \rangle_\h$ for any $Y,Z \in \h$. Since $[A,J_\h]=[B,J_\h]=0$ and $A,B \in \mathfrak{so}(\h)$
\[
J_\h A+ A^tJ_\h=(A+A^t)J_\h=0, \ J_\h B+ B^tJ_\h=(B+B^t)J_\h=0,
\]
from which follows that $d\omega=- u\wedge  Ju \wedge \iota_V \omega_\h$. The Bismut torsion $3$-form $c$  is hence given by 
\[
c=Jd\omega=-u\wedge  Ju \wedge J(\iota_V \omega_\h)= u\wedge  Ju \wedge \iota_{JV} \omega_\h,
\]
and it is clearly closed as $d\h^* \subset \h^* \otimes x \oplus \h^*\otimes Jx$. \\
 To prove \emph{(ii)} we use that by  \emph{(i)} the SKT condition is equivalent to $[A,J_\h]=[B,J_\h]=0$ and $A,B\in \mathfrak{so}(\h)$. Hence, since  $\h$ is abelian, $A,B,J_\h$ are three skew-symmetric endomorphisms which commute pairwise. As a consequence, there   exists an orthonormal basis $\{e_1,\dots,e_{2n-2}\}$ of $(\h, \langle \cdot,\cdot \rangle_{|\h})$ such that $A,B,J_\h$ can be put simultaneously in their diagonal normal forms
\[
J_\h=\operatorname{diag}(\Lambda_1,\dots,\Lambda_{n-1}), \ A=\operatorname{diag}(A_1,\dots,A_{n-1}), \ B=\operatorname{diag}(B_1,\dots,B_{n-1}),
\]
where  $\Lambda_i$, $A_i$ and $B_i$  are gives as in \eqref{normalforms}.\\
\emph{(iii)}  follows from the fact that,   if $(J, \langle \cdot,\cdot \rangle)$ is SKT then $d\omega=- u\wedge  Ju \wedge \iota_V \omega_\h$. \\
To prove \emph{(iv)}  we exploit the condition $\mathfrak{z} (\g) \cap \h =\{0\}$ to prove that $B-AJ_\h$ is invertible. Indeed, since $(J,  \langle \cdot,\cdot \rangle, J)$ is SKT, by  \emph{(ii)}  we may always find an orthonormal basis $\{e_1,\dots,e_{2n-2}\}$ of $(\h, \langle \cdot,\cdot \rangle_{|\h})$ such that $A,B,J_\h$ are in their diagonal normal forms. Then, 
\[
B-AJ_\h=\operatorname{diag}(C_1,\dots,C_{n-1}), \quad 
{\text{with }}
\quad 
C_i=\begin{pmatrix}
a_i & -b_i \\
b_i & a_i\\
\end{pmatrix}.
\]
In particular,
$
\det(B-AJ_\h)=\prod \det(C_i)=\prod(a_i^2+b_i^2) \neq 0.
$
In fact, if $\det(B-AJ_\h)=0$, then there must exists an index $\overline{i}$ such that $a_{\overline{i}}^2+b_{\overline{i}}^2=0$. Moreover, this would imply that $a_{\overline{i}}=b_{\overline{i}}=0$, i.e., $e_{2\overline{i}-1},e_{2\overline{i}} \in \mathfrak{z} (\g)\cap \h=\{0\}$, a contradiction. \\  
Since  $B-AJ_\h$ is invertible, there exists a vector $Y \in \h$ such that $(B-AJ_\h) Y=V$. We consider the new $J$-Hermitian metric 
$
\langle \cdot,\cdot \rangle'= \langle \cdot,\cdot \rangle_{|\h}+u'^2+Ju'^2,
$
where $u'$ and $Ju'$ are the duals of $U'=U+Y$ and $JU'=JU+JY$. Then, $A'=ad_{u'|\h}=A$, \ $B'=ad_{Ju'|\h}=B$, implying that the Hermitian structure $(J, \langle \cdot,\cdot \rangle')$ is again SKT (observe that $\langle \cdot,\cdot \rangle'_{|\h}= \langle \cdot,\cdot \rangle_{|\h}$). Moreover, since $[U',JU']=0$, the Hermitian structure $(\langle \cdot,\cdot \rangle', J)$ is K\"ahler by \emph{(iii)}. 
\end{proof}
\begin{remark}Note that   by Theorem \ref{theorem:sktinvariant} the Hermitian Lie algebra $(\g, \langle \cdot, \cdot \rangle, J)$ is always Chern Ricci-flat, but in  general, when $\h$ is abelian,  it is not Chern flat (for further details see \cite[Proposition 4]{GZ}).
\end{remark}
Regarding  the existence of generalized K\"ahler structures we can prove the following
\begin{theorem}  \label{thm:equivalent}
Let $\g$ be a solvable Lie algebra with nilradical $\h$ of codimension $2$. 
The following are equivalent
\begin{enumerate} [label=\roman{*}., ref=(\roman{*})]
\item[(i)]  $\g$ admits a generalized K\"ahler structure $(J_\pm, \langle \cdot,\cdot \rangle)$  such that $J_\pm \h=\h$; 
\item[(ii)] $\g$ admits a K\"ahler structure $(J_+, \langle \cdot,\cdot \rangle)$ such that $J_+ \h=\h$. \end{enumerate}
\end{theorem}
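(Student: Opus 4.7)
The implication $(ii) \Rightarrow (i)$ is immediate: any K\"ahler structure $(J_+, \langle \cdot, \cdot \rangle)$ with $J_+ \h = \h$ yields a trivial generalized K\"ahler structure $(J_+, -J_+, \langle \cdot, \cdot \rangle)$.

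For the converse, the plan is to first descend the structure to $\h$. Formula \eqref{eqn:differential} shows that for any $\alpha \in \bigwedge^k \h^*$ the component of $d\alpha$ lying in $\bigwedge^{k+1}\h^*$ equals $d_\h \alpha$; applied to $\omega_\pm$ and to $c_\pm = J_\pm d\omega_\pm$, this implies that the restriction of $c_\pm$ to $\bigwedge^3 \h$ coincides with the Bismut torsion of the induced Hermitian Lie algebra $(\h, J_\pm|_\h, \langle \cdot,\cdot \rangle|_\h)$. Since $J_\pm|_\h$ are integrable (being restrictions of integrable complex structures to a $J_\pm$-invariant ideal), the generalized K\"ahler identities $c_+ = -c_-$ and $dc_+ = 0$ descend verbatim to $\h$, so $(\h, J_\pm|_\h, \langle \cdot,\cdot \rangle|_\h)$ is a generalized K\"ahler structure on the nilpotent Lie algebra $\h$. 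By \cite{CA} it follows that $\h$ must be abelian.

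With $\h$ abelian, the computation in the proof of Corollary \ref{thm:abelian} yields the explicit expression $c_\pm = u \wedge (J_\pm u) \wedge \iota_{J_\pm V_\pm}\omega_{\pm,\h}$, which simplifies to $c_\pm = -u \wedge v_\pm \wedge V_\pm^\flat$, where $v_\pm$ denotes the dual of $J_\pm U$ and $V_\pm = [U, J_\pm U]$. Since $\dim \h^\perp = 2$, one necessarily has $J_- U = \pm J_+ U$. A direct inspection in each sign case shows that the sign changes in $v_\pm$ and in $V_\pm$ cancel, yielding $c_+ = c_-$. Combined with the generalized K\"ahler condition $c_+ = -c_-$, this forces $c_\pm = 0$, hence $V_+ = [U, J_+ U] = 0$ and $[\h^\perp, \h^\perp] = 0$. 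Corollary \ref{thm:abelian}(iii) then gives that $(J_+, \langle \cdot, \cdot \rangle)$ is K\"ahler.

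The main obstacle is the final sign analysis: a priori the condition $c_+ = -c_-$ is compatible with a non-vanishing Bismut torsion, and it is precisely the coincidence $c_+ = c_-$, forced by the two-dimensionality of $\h^\perp$ together with the structure of the explicit torsion formula, that degenerates the generalized K\"ahler condition into the K\"ahler condition.
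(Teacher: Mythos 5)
Your proposal is correct and follows essentially the same route as the paper: descend the generalized K\"ahler condition to $\h$ via the $\bigwedge^{\bullet}\h^*$-components of $c_\pm$ and $dc_+$, invoke \cite{CA} to get $\h$ abelian, then use the $\varepsilon^2$-cancellation coming from $J_-U=\pm J_+U$ to force $[U,J_+U]=0$ and conclude by Corollary \ref{thm:abelian}(iii). The only cosmetic difference is that you pass through the explicit torsion formula of Corollary \ref{thm:abelian} (legitimate, since a generalized K\"ahler metric is SKT for both complex structures), whereas the paper evaluates $c_\pm(W,U,J_+U)$ directly; both give $c_+=c_-$ and hence $c_\pm=0$.
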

\begin{proof} \hfill\\
$(ii) \Rightarrow (i).$ It suffices to take $J_-=-J_+$.\\
$(i) \Rightarrow (ii). $ Let us fix an orthonormal basis $\{U,U'\}$ of $\h^\perp$ with dual basis $\{u,u'\}$. Without loss of generality, we may assume such that $U'=J_+U$  and $J_-U=\varepsilon U'$, for $\varepsilon \in \{-1,+1\}$. Let $A=ad_{U}\vert_{ \h},B=ad_{U'}\vert_{ \h}$ and $V=[U,U']$.   \\
Since the Bismut torsions  $3$-forms $c_\pm$ of the Hermitian structures $(J_\pm, \langle \cdot,\cdot \rangle)$  satisfy $c_+=-c_-$, then there exist $\alpha, \beta \in {\bigwedge}^2 \h^*$ and $\gamma \in \h^*$ such that
\[
  c_\pm= \pm \alpha \wedge u \pm \beta \wedge u' \pm \gamma \wedge u\wedge u' +c_{\h_{\pm}},
\]
where $c_{\h_{\pm}}$ are the torsion $3$-forms of the Hermitian structures $({J_{\pm}}_{\h}, \langle \cdot,\cdot \rangle_\h)$ on $\h$, respectively. Since $c_+ = - c_-$,  we clearly have $c_{\h+}=-c_{\h-}$. \\
Using \eqref{eqn:differential} and the SKT condition $dc_+=0$, we get 
\[
\begin{split}
dc_+&=(d_\h\ \alpha-A^*c_{\h+})\wedge u+(d_\h\ \beta-B^*c_{\h+})u' \\
&\quad +( -B^*\alpha+A^*\beta+d_\h \gamma-\iota_V c_{\h+})\wedge u \wedge u' +d_\h c_{\h+}=0. 
\end{split}
\] 
It follows that $d_\h c_{\h+}=0$, i.e.,  $(\h, {J_{\pm}}_{\h}, \langle \cdot,\cdot \rangle_\h)$ is a generalized K\"ahler Lie algebra. Moreover, since $\h$ is nilpotent, this forces $\h$ to be abelian by \cite{CA}.\\
For any $W \in \h$
\[
c_+ (W,U,U')=-\langle [J_+W,J_+U],U' \rangle - \langle [J_+U,J_+U'],W \rangle -\langle [J_+ U',J_+ W], U\rangle
= - \langle V,W \rangle
\] 
and, analogously, $
c_- (W,U,U')= -\varepsilon^2 \langle V ,W \rangle= - \langle V,W \rangle.
$
Since $c_+=-c_-$, we must have $V=0$, i.e., $[\h^\perp, \h^\perp]=0$. Moreover, this implies that the SKT structure $(J_+, \langle \cdot,\cdot \rangle)$ on $\g$ is K\"ahler by Thorem \ref{thm:abelian}.
\end{proof}
\begin{remark} One can show that if a solvable Lie algebra $\g$ with nilradical $\h$ of even-codimension admits a generalized K\"ahler structure $(\langle \cdot,\cdot \rangle, J_\pm)$ such that $J_\pm \h=\h$, then $\h$ is abelian. The proof proceeds in the same way as before, using a generalization of \eqref{eqn:differential}.
\end{remark}

\section{Case $J\h \neq \h$}\label{section3}
Let $\g$ be a $2n$-dimensional solvable Lie algebra with nilradical $\h$ of codimension $2$ endowed with  an almost Hermitian structure  $(J, \langle \cdot,\cdot \rangle)$  such that $J\h \neq \h$.  As a consequence $\g=\h + J\h$. Setting $\h_J:= \h \cap J\h$, we  have the  orthogonal decomposition
$
\g = \h_J \oplus (\h_J)^\perp, 
$
where each summand is $J$-invariant. Since $\h$ has codimension $2$, $\dim(\h_J)=2n-4$. Observe that when $2n=4$, the decomposition above is trivial. Moreover, since SKT structures on $4$-dimensional solvable Lie algebras have been fully descripted in \cite{MS}, we may restrict to consider $2n > 4$.  Now, we focus on the $4$-dimensional $J$-invariant space $(\h_J)^\perp=\mathfrak{k} \oplus \h^\perp$, where $\mathfrak{k}$ is the orthogonal complement of $\h_J$ in $\h$. 
Let $\{e_{2n-1},e_{2n}\}$ be any orthonormal basis of $\h^\perp$. Then, 
$$
J e_{2n-1}= J_{34}  e_{2n} + h_{2n-2}, \quad 
J e_{2n}= -J_{34}  e_{2n-1} + h_{2n-3}
$$
where $J_{34} \in \R$ and $h_{2n-2}, h_{2n-3}$  is a pair of non-zero orthogonal vectors of $\mathfrak{k}$ such that $J^2 e_{2n-1}=-e_{2n-1}$ and $J^2 e_{2n}=-e_{2n}$. 
Hence, $\{e_{2n-3}:= \frac{h_{2n-3}}{\norm{h_{2n-3}}},e_{2n-2}:=\frac{h_{2n-2}}{\norm{h_{2n-2}}},e_{2n-1},e_{2n}\}$ is an orthonormal basis of the $J$-invariant subspace $ (\h_J)^\perp$. \\
With respect to the decomposition $\g=\h_J \oplus (\h_J)^\perp$, the almost complex structure $J$ splits as 
\[ 
J:= \begin{pmatrix}
     J_{\h_{J}} & 0 \\
    0  &  J_{(\h_J)^\perp}
     \end{pmatrix},
\]
and $J^2=-Id$ is equivalent to $J_{\h_{J}}^2=-Id_{2n-4}$ and $J_{(\h_J)^\perp}^2=-Id_4$. 
With respect to the orthonormal basis $\{e_{2n-3},e_{2n-2},e_{2n-1}, e_{2n}\}$ of $(\h_J)^\perp$, the restricted almost complex structure $J_{(\h_J)^\perp}$ is represented by the skew-symmetric matrix
\begin{equation*}  
J_{(\h_J)^\perp}= \begin{pmatrix} 
0 & J_{12} & 0 & J_{14} \\
-J_{12} & 0 & J_{23} & 0 \\
0 & -J_{23} & 0 & J_{34} \\
-J_{14} & 0 & -J_{34} & 0 \\
\end{pmatrix},
\end{equation*}
where the entries  $J_{ij}$  satisfy the conditions
\begin{equation} \label{eqn:Isquare} 
\begin{cases}
J_{12}^2+J_{14}^2=1, \quad 
J_{12}^2+J_{23}^2=1, \quad 
J_{23}^2+J_{34}^2=1, \quad 
J_{14}^2+J_{34}^2=1, \\
-J_{14}J_{34}+J_{12}J_{23}=0,  \quad 
-J_{14}J_{12}+J_{23}J_{34}=0.
\end{cases}
\end{equation}
Observe that $J_{14} \neq 0$ and $J_{23} \neq 0$, 
as $J\h\neq \h$. 
Exploiting the conditions \eqref{eqn:Isquare}, one obtains two different but equivalent almost complex structures on $(\h_J)^\perp$ corresponding to either $J_{12}=-J_{34}$ and $J_{14}=-J_{23}$ or $J_{12}=J_{34}$ and $J_{14}=J_{23}$. We will consider only the first case, as they are equivalent up to a change of basis of $(\h_J)^\perp$.
\begin{remark} \label{rem:cmpstr}
To summarize, we may always endow $(\h_J)^\perp$ with an orthonormal basis $\{e_{2n-3},e_{2n-2},e_{2n-1}, e_{2n}\}$ such that $\{e_{2n-3}, e_{2n-2}\}$ and $\{e_{2n-1}, e_{2n}\}$ are unitary basis of $\mathfrak{k}$ and $\h^\perp$ respectively, and the restricted almost complex structure $J_{ (\h_J)^\perp}$ can be written with respect to such a basis as
\begin{equation} \label{eqn:complexstructure2} 
J_{ (\h_J)^\perp}= \begin{pmatrix}
0 & J_{12} & 0 & J_{14} \\
-J_{12} & 0 & -J_{14} & 0 \\
0 & J_{14} & 0 & -J_{12} \\
-J_{14} & 0 & J_{12} & 0 \\
\end{pmatrix},
\end{equation}
with $J_{12}^2+J_{14}^2=1$ and $J_{14} \neq 0$. 
\end{remark}
We restrict  now to the case of $\h$ being abelian, where we can investigate the existence of generalized K\"ahler structures. 
\begin{proposition} \label{proposition:integrability}
Let  $(\g,  J, \langle \cdot, \cdot \rangle)$ be an Hermitian solvable Lie algebra with codimension $2$ abelian nilradical $\frak h$ such that $J\h \neq \h$. Then $\h_J:=\h \cap J\h$ is an ideal of $\g$. \\
Moreover,  if $\{e_{2n-3}, e_{2n-2}, e_{2n-1}, e_{2n}\}$ is an orthonormal basis of $(\h_J)^\perp$ as in Remark \ref{rem:cmpstr}, we further have that $$ [A_{\h_{J}},J_{\h_{J}}] =[B_{\h_{J}},J_{\h_{J}}]=0,$$ where $A_{\h_{J}}, B_{\h_{J}},J_{\h_{J}}$ denote the restriction of $A:=ad_{e_{2n-1}}\vert_{\h}, B:=ad_{e_{2n}}\vert_{\h}$ and $J$ to $\h_J$, respectively.
\end{proposition}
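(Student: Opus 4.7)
The plan is to exploit the integrability of $J$ (i.e. $N \equiv 0$) on carefully chosen test pairs, in order to extract both conclusions of the proposition from a single computation. The key idea is to pair an arbitrary $Y \in \h_J$ not with a vector of $\h^\perp$, but with the basis vectors $e_{2n-3}, e_{2n-2} \in \mathfrak{k} \subset \h$. Since $\h$ is abelian and $Y, JY, e_{2n-3}, e_{2n-2}$ all lie in $\h$, three of the four brackets entering the Nijenhuis tensor vanish automatically, and only the bracket with the $\h^\perp$-component of $Je_{2n-3}$ or $Je_{2n-2}$ survives.

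Using the normal form of Remark \ref{rem:cmpstr}, namely $Je_{2n-3}=-J_{12}e_{2n-2}-J_{14}e_{2n}$ and $Je_{2n-2}=J_{12}e_{2n-3}+J_{14}e_{2n-1}$, a short direct computation collapses the two integrability identities to
\[
N(e_{2n-3},Y)=J_{14}\bigl(B(JY)-J(BY)\bigr)=0,\qquad N(e_{2n-2},Y)=J_{14}\bigl(J(AY)-A(JY)\bigr)=0.
\]
The hypothesis $J\h\neq\h$ forces $J_{14}\neq 0$, and hence $B(JY)=J(BY)$ and $A(JY)=J(AY)$ for every $Y\in\h_J$.

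The final step is to split these identities according to the decomposition $\g=\h_J\oplus\mathfrak{k}\oplus\h^\perp$. Writing $BY=B_1Y+B_2Y$ with $B_1Y\in\h_J$ and $B_2Y\in\mathfrak{k}$, the only contribution to the $\h^\perp$-component of $J(BY)$ comes from $J(B_2Y)$, which by Remark \ref{rem:cmpstr} is $J_{14}$ times an isomorphism $\mathfrak{k}\to\h^\perp$ applied to $B_2Y$. Since $B(JY)\in\h$ has no $\h^\perp$-part, matching components forces $B_2Y=0$, i.e.\ $B\h_J\subset\h_J$; the residual $\h_J$-component then reads $[B_{\h_J},J_{\h_J}]=0$. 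Repeating the argument with $N(e_{2n-2},Y)=0$ yields $A\h_J\subset\h_J$ and $[A_{\h_J},J_{\h_J}]=0$. Combined with $[\h,\h_J]=0$ (from $\h$ abelian), this gives $[\g,\h_J]\subset\h_J$, proving that $\h_J$ is an ideal of $\g$. I do not foresee any serious obstacle: the only technical care needed is in tracking the $J$-mixing between $\mathfrak{k}$ and $\h^\perp$ encoded by $J_{12}$ and $J_{14}$, and the nonvanishing of $J_{14}$ is precisely what makes the $\h^\perp$-projection both nontrivial and injective.
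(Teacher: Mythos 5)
Your proof is correct and follows essentially the same route as the paper: both evaluate the Nijenhuis tensor on pairs $(e_{2n-3},Y)$ and $(e_{2n-2},Y)$ with $Y\in\h_J$, use the abelianity of $\h$ to kill all but the brackets against the $\h^\perp$-components of $Je_{2n-3}$ and $Je_{2n-2}$, and then invoke $J_{14}\neq 0$ to obtain $A(JY)=J(AY)$ and $B(JY)=J(BY)$, from which both conclusions follow. Your explicit $\h^\perp$-component matching is just a slightly more spelled-out version of the paper's observation that $[X,e_{2n-1}]=J[-JX,e_{2n-1}]\in J\h$.
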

\begin{proof}
Let $\{e_{2n-3},e_{2n-2},e_{2n-1},e_{2n}\}$ be an orthonormal basis of $(\h_J)^\perp$ as in Remark \ref{rem:cmpstr}. To prove that $\h_J$ is an ideal, since $\h$ is abelian and $\g^1 \subset \h$, we only need to check that for any $X \in \h_J$, $[X, e_{2n}],[X, e_{2n-1}] \in J\h$. By the integrability of $J$
\[
\begin{split}
N_J(X,e_{2n-2})=&J[X,Je_{2n-2}]-[JX,J e_{2n-2}]\\
=& J[X,J_{12}e_{2n-3}+J_{14} e_{2n-1}]-[JX,J_{12}e_{2n-3}+J_{14} e_{2n-1}] \\
=& J_{14}  (J[X,e_{2n-1}]-[JX,e_{2n-1}] )= -J_{14} (JAX-AJX)= 0.
\end{split}
\]
Hence, $[X,e_{2n-1}]=J[-JX,e_{2n-1}] \in J\h$. Analogously, $[X,e_{2n}]=J[-JX,e_{2n}] \in J\h$. Furthermore, we also get $ [A_{\h_{J}},J_{\h_{J}}] =[B_{\h_{J}},J_{\h_{J}}]=0$.
\end{proof}

\begin{proposition} \label{proposition:gk}
Let $\g$ be a solvable Lie algebra with abelian nilradical $\h$ of codimension $2$. Assume that  $\g$ is endowed with a SKT structure $(J, \langle \cdot, \cdot \rangle)$ such that $J\h= \h$. If there exists another complex structure $I$ compatible with $\langle \cdot,\cdot \rangle$ and such that $I\h \neq \h$, then $( J, \langle \cdot ,\cdot \rangle)$ is K\"ahler. \\
In particular $\g$ does not admit any non K\"ahler generalized K\"ahler structure $(I,  J, \langle \cdot,\cdot \rangle)$ such that  $I \h \neq \h$ and $J\h=\h$.
\end{proposition}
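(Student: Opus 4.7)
The plan is to reduce the claim to showing that $V := [U, JU] = 0$, where $\{U, JU\}$ is an orthonormal basis of $\h^\perp$: by Corollary \ref{thm:abelian}(iii) this is exactly the K\"ahler condition for $(J, \langle \cdot, \cdot \rangle)$. The final ``in particular'' assertion is then immediate, because for any generalized K\"ahler structure $(I, J, \langle \cdot,\cdot \rangle)$ with $J\h=\h$ and $I\h \neq \h$, the $J$-side is SKT and $I$ is compatible with the metric, which is precisely the setup of the first part.

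First I would apply Proposition \ref{proposition:integrability} to $(I, \langle \cdot,\cdot \rangle)$, so that $\h_I := \h \cap I\h$ is a codimension-$4$ ideal, and pick an orthonormal basis $\{f_1, f_2, f_3, f_4\}$ of $(\h_I)^\perp$ as in Remark \ref{rem:cmpstr}, with $\{f_1, f_2\}$ a basis of $\mathfrak{k} := \h \cap (\h_I)^\perp$ and $\{f_3, f_4\}$ a basis of $\h^\perp$. Set $A' := \ad_{f_3}|_\h$, $B' := \ad_{f_4}|_\h$ and $V' := [f_3, f_4]$. Since $\{f_3, f_4\}$ is another orthonormal basis of the two-dimensional space $\h^\perp$, one has $V' = \pm V$, reducing the goal to $V' = 0$. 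By Corollary \ref{thm:abelian} applied to the SKT pair $(J, \langle \cdot, \cdot \rangle)$, the operators $\ad_U|_\h, \ad_{JU}|_\h$ lie in $\mathfrak{so}(\h)$ and commute with $J_\h$, and the same is inherited by their real linear combinations $A'$ and $B'$.

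Two structural observations prepare the endgame. First, since $\h_I$ is an ideal, $A'$ and $B'$ preserve $\h_I$; combined with their skew-symmetry this forces $A'(\mathfrak{k}), B'(\mathfrak{k}) \subset \mathfrak{k}$. Second, a direct computation of $N_I(f_3, f_4) = 0$, using that $\h$ is abelian and the explicit form \eqref{eqn:complexstructure2} of $I|_{(\h_I)^\perp}$, yields the single identity
\[
J_{14}\, V' = -J_{12}\,(A' f_2 - B' f_1) - I\bigl(A' f_1 + B' f_2\bigr).
\]

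The endgame extracts $V' = 0$ by component analysis. Projecting this identity onto $\h^\perp$ annihilates the left-hand side (as $V' \in \h$) and the first term of the right-hand side (as $A'f_2 - B'f_1 \in \h$); from the explicit action of $I$ on $\mathfrak{k}$, together with $J_{14}\neq 0$, the $\h^\perp$-component of $I(X)$ for $X \in \h$ vanishes if and only if $X \in \h_I$, and hence $A' f_1 + B' f_2 \in \h_I$. But $A'f_1, B'f_2 \in \mathfrak{k}$ by the ideal-plus-skew step, so $A'f_1 + B'f_2 \in \h_I \cap \mathfrak{k} = \{0\}$. Skew-symmetry of $A'$ together with $A' f_1 \in \mathfrak{k}$ forces $A' f_1 \in \R f_2$; analogously $B' f_2 \in \R f_1$, so their sum vanishing yields $A' f_1 = 0 = B' f_2$. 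One more application of skew-symmetry (with $A'(\mathfrak{k}) \subset \mathfrak{k}$ and $A' f_1 = 0$) gives $A' f_2 = 0$, and symmetrically $B' f_1 = 0$; substituting back into the identity yields $V' = 0$. The main obstacle I expect is the component-by-component bookkeeping above---isolating exactly the piece of the single Nijenhuis relation that, once combined with the ideal-plus-skew constraint, pins all of $A' f_1, A' f_2, B' f_1, B' f_2$ to zero.
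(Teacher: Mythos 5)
Your proof is correct and follows essentially the same route as the paper's: both rest on Proposition \ref{proposition:integrability}, the skew-symmetry of the adjoint operators supplied by Corollary \ref{thm:abelian}, and the vanishing of the Nijenhuis tensor of $I$ on $(\h_I)^\perp$ to force first $A'|_{\mathfrak{k}}=B'|_{\mathfrak{k}}=0$ and then $[\h^\perp,\h^\perp]=0$, after which Corollary \ref{thm:abelian} \emph{(iii)} applies. The only difference is bookkeeping: the paper reads off the scalar components $N_I(e_{2n-2},e_{2n-3},e_{2n-1})$, $N_I(e_{2n-2},e_{2n-3},e_{2n})$ and then $N_I(e_{2n-1},e_{2n})$, whereas you project the single identity $N_I(f_3,f_4)=0$ onto $\h^\perp$ and exploit skew-symmetry to extract the same conclusions.
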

\begin{proof}
Let $\{e_{2n-3},e_{2n-2},e_{2n-1},e_{2n}\}$ be an orthonormal basis of $(\h_I)^\perp=(\h \cap I\h)^\perp$ as in Remark \ref{rem:cmpstr}). \\
If we consider any orthonormal basis $\{e_1,\dots,e_{2n-4}\}$ of $\h_I$, then  $\cal{B}=\{e_1,\dots,e_{2n-4},e_{2n-3},e_{2n-2}\}$ is an orthonormal basis of $\h$. Furthermore, $\h_I$ is an ideal of $\g$ by Proposition \ref{proposition:integrability}. \\
Since $(\langle \cdot, \cdot \rangle, J)$ is SKT and $\h$ is abelian, we have that  $A= ad_{e_{2n-1}}\vert_{\h}, B = ad_{e_{2n}}\vert_{\h}$ are in $\mathfrak{so}(\h)$ by Corollary \ref{thm:abelian}. Hence, with respect to $\cal{B}$ 
\[
A= \begin{pmatrix}
\begin{array}{c|c}
A_{\h_{I}}& 0\\
\hline
0 & \begin{matrix} 0 & c_{12} \\
-c_{12} & 0 \end{matrix} 
\end{array}
\end{pmatrix}, \ \ 
B= \begin{pmatrix}
\begin{array}{c|c}
B_{\h_{I}}& 0\\
\hline
0 & \begin{matrix} 0 & d_{12} \\
 -d_{12} & 0 \end{matrix} 
\end{array}
\end{pmatrix},
\quad 
{\text {where}}  \, \, A_{\h_{I}}, B_{\h_{I}} \in \mathfrak{so}(\h_I). 
\]
Regarding the Nijenhuis tensor $N_I$ as a $(0,3)$-tensor with the aid of the inner product $\langle \cdot,\cdot \rangle$, namely $N_I(X,Y,Z)=\langle N_I(X,Y),Z \rangle$, we get
\[
N_I(e_{2n-2},e_{2n-3},e_{2n-1})= -I_{14} c_{12}, \ \  N_I(e_{2n-2},e_{2n-3},e_{2n})= -I_{14} d_{12}.
\]
By the integrability of $I$ and by the fact that $I_{14} \neq 0$ (see Remark \ref{rem:cmpstr}), we have $c_{12}=d_{12}=0$. \\
Hence, if one computes
\[
[e_{2n-1},e_{2n}]=-(I[Ie_{2n-1},e_{2n}]+I[e_{2n-1},Ie_{2n}])+[Ie_{2n-1},Ie_{2n}]= I_{12}^2 [e_{2n-1},e_{2n}],
\]
one gets $0=(1-I_{12}^2)[e_{2n-1},e_{2n}]=I_{14}^2 [e_{2n-1},e_{2n}]$, i.e., $[\h^\perp,\h^\perp]=0$. The result follows by Corollary \ref{thm:abelian}. 
\end{proof}
\begin{remark}
We have seen in the previous section that if  $\g$ is a solvable Lie algebra with codimension $2$ nilradical $\h$ that admits a generalized K\"ahler structure $(I_{\pm}, \langle \cdot,\cdot \rangle)$ such that $I_\pm \h=\h$, then $\h$ is abelian. It is in general not true when  $I_\pm \h \neq\h$. Indeed, let us consider the Lie algebra $\frak s \cong (\mathfrak{h}_3\oplus \R^3) \rtimes \R^2 =(e^{23}+e^{17},\frac{1}{2} e^{27},\frac{1}{2} e^{37},-e^{48},\frac{1}{2} e^{58} -e^{67},\frac{1}{2} e^{68}+e^{57},0,0),$ where $\{e_1,\dots,e_{6}\}$ is a basis of $\mathfrak{h}_3\oplus \R^3$ , $\{e_7,e_8\}$ is a basis of $\R^2$ and by $e^{ij}$ we denote $e^i \wedge e^j$.
By the structure equations, we have that the nilradical $\h$ is spanned by $\{e_1,\dots,e_{6}\}$ and it is easy to observe that $\h=\mathfrak{h}_3\oplus \R^3$. Let us define the bi-Hermitian structure $(\langle \cdot,\cdot \rangle, I_\pm)$ with  $I_\pm \h\neq \h$ as
\[
 I_\pm e_1=e_{7}, \  I_\pm e_2= e_{3},\  I_\pm e_{5}=\pm e_{6},  I_\pm e_{4}=e_{8}, \quad  \langle \cdot,\cdot \rangle =  \mathlarger{\sum}_{i=1}^{8} (e^i)^2. \ 
\]
The corresponding fundamental forms $\omega_\pm=e^1 \wedge e^{7} + e^2 \wedge e^3 \pm e^{5}\wedge e^{6}+e^4 \wedge e^8$ satisfy
\[
\begin{split}
d^c_\pm\omega_\pm= \pm \ e^4 \wedge e^5 \wedge e^6,
\end{split}
\]
and  $e^{4}\wedge e^{5}\wedge e^{6}$ is a closed $3$-form, i.e., $( I_\pm, \langle \cdot,\cdot \rangle)$ is a generalized K\"ahler structure. Moreover, we observe that since the  Lie algebra is not unimodular, the corresponding connected and simply connected Lie group does not admit lattices.  
\end{remark}
\section{Classification in dimension six} \label{section4}
In this section we provide a classification of six-dimensional unimodular solvable Lie algebras with nilradical of codimension $2$ that admit a SKT structure.  
\begin{theorem} \label{thm:classif} A unimodular six-dimensional solvable Lie algebra  $\g$ with codimension $2$ nilradical $\h$ admits a SKT structure $(J, \langle \cdot,\cdot \rangle)$ if and only if $\g$ is isomorphic to one of the following Lie algebras
\begin{flalign*} 
&\begin{aligned}
 &\tau_{3,0} \times \tau_{3,0}=(-f^{25},f^{15},-f^{46},f^{36},0,0),\\
 &\g_{5.35}^{-2,0} \oplus \R=(2f^{15},-f^{25}-f^{36},-f^{35}+f^{26},0,0,0),
\end{aligned} && 
\end{flalign*} hence, in particular, $\h$ must be abelian. An explicit example of  SKT structure is given respectively by 
$$
\begin{aligned}
&Jf_1=f_2, \ Jf_3=f_4, \ Jf_5=f_6, \  \langle \cdot,\cdot \rangle=\sum_{i=1}^6 (f^i)^2\\
&Jf_1=f_5, \ Jf_2=f_3, \ Jf_4=f_6, \  \langle \cdot,\cdot \rangle=\sum_{i=1}^6 (f^i)^2.
\end{aligned}$$
\end{theorem}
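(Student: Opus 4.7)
The plan is to split according to whether $J\h=\h$ or $J\h\neq\h$ and to treat each case with the results of Sections~\ref{section2} and \ref{section3} respectively. The forward implication requires ruling out non-abelian nilradicals in both cases and then using the resulting normal forms together with unimodularity to identify the Lie algebras with specific entries of Mubarakzyanov's classification of six-dimensional unimodular solvable Lie algebras with four-dimensional nilradical. The backward implication is a short direct check on the two exhibited Hermitian data.

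For the case $J\h=\h$, since $\dim\h=4$ the possible nilradicals are $\R^{4}$, $\mathfrak h_{3}\oplus\R$, and the filiform $\mathfrak n_{4}$. Theorem~\ref{theorem:sktinvariant}(iv) immediately eliminates $\mathfrak n_{4}$. When $\h=\mathfrak h_{3}\oplus\R$ the center is $2$-dimensional and the conditions of Theorem~\ref{theorem:sktinvariant}(iv) force $A_{\mathfrak z(\h)},B_{\mathfrak z(\h)}\in\mathfrak{so}(\mathfrak z(\h))$ commuting with $J_{\mathfrak z(\h)}$. Combining this with the explicit description of derivations of $\mathfrak h_{3}\oplus\R$, the Jacobi identity $[A,B]=\mathrm{ad}_{V}|_{\h}$, the integrability condition of Theorem~\ref{theorem:sktinvariant}(i), and unimodularity $\mathrm{tr}(A)=\mathrm{tr}(B)=0$, I expect to derive that no admissible SKT data exists, forcing $\h$ to be abelian. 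With $\h=\R^{4}$, Corollary~\ref{thm:abelian} yields a shared orthonormal basis in which $A,B,J_\h$ are block-diagonal; since $\h$ is abelian one has $[A,B]=\mathrm{ad}_{V}|_{\h}=0$, so $A,B$ commute and lie in a common maximal torus of $\mathfrak u(2)$, and unimodularity is automatic because $A,B\in\mathfrak{so}(4)$. The isomorphism class of $\g$ is then determined by the pair of $2\times 2$ rotation blocks of $A$ and $B$, and matching with the classification yields precisely $\tau_{3,0}\times\tau_{3,0}$.

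For the case $J\h\neq\h$ we have $\dim\h_{J}=2$ and $(\h_{J})^{\perp}$ is $4$-dimensional. The first task is to rule out a non-abelian nilradical, for which one must adapt the argument of Theorem~\ref{theorem:sktinvariant}(iv) to the present setting by examining the components of $dc$ along vectors in $\mathfrak z(\h)\cap\h_{J}$ and using the explicit form of $J_{(\h_{J})^{\perp}}$ from Remark~\ref{rem:cmpstr}. Once $\h$ is shown abelian, Proposition~\ref{proposition:integrability} gives that $\h_{J}$ is an ideal and that $[A_{\h_{J}},J_{\h_{J}}]=[B_{\h_{J}},J_{\h_{J}}]=0$. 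Expanding $dd^{c}\omega=0$ in the basis of Remark~\ref{rem:cmpstr} reduces the SKT condition to a finite system of polynomial equations in $J_{12},J_{14}$ and the entries of $A,B$, which combined with unimodularity and integrability should leave essentially only the algebra $\g_{5.35}^{-2,0}\oplus\R$, realised with the complex structure $Jf_{1}=f_{5}$, $Jf_{2}=f_{3}$, $Jf_{4}=f_{6}$.

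The backward implication is a direct verification: for each of the two algebras one reads off $\omega$, $d\omega$ and $d^{c}\omega$ from the structure equations and checks $dd^{c}\omega=0$. The main obstacle is the case $J\h\neq\h$, where no off-the-shelf analogue of Theorem~\ref{theorem:sktinvariant}(iv) excludes $\h=\mathfrak h_{3}\oplus\R$, and where the two-parameter freedom in $J_{(\h_{J})^{\perp}}$ must be carefully handled; a secondary difficulty in both cases is the identification of isomorphism classes, since the normal forms from Corollary~\ref{thm:abelian} and Remark~\ref{rem:cmpstr} can produce several presentations of the same Lie algebra, and separating genuine isomorphism classes requires bookkeeping modulo the action of the relevant automorphism group.
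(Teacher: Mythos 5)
Your overall architecture --- splitting on $J\h=\h$ versus $J\h\neq\h$, handling the invariant case with Theorem \ref{theorem:sktinvariant} and Corollary \ref{thm:abelian}, and the non-invariant case with Proposition \ref{proposition:integrability} plus an expansion of $dd^c\omega=0$ --- coincides with the paper's. But the proposal defers precisely the steps that constitute the proof, and at least one of them is a genuine gap rather than a routine verification. The critical point is the exclusion of non-abelian nilradicals when $J\h\neq\h$. You propose to ``adapt the argument of Theorem \ref{theorem:sktinvariant}(iv)'', but that argument rests entirely on $J$-invariance of $\h$: it uses that $(\h,J_\h,\langle\cdot,\cdot\rangle_\h)$ is itself an SKT nilpotent Lie algebra (hence at most $2$-step by \cite{ArNic}) and that $\mathfrak{z}(\h)$ is a $J$-invariant orthogonal summand. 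Neither is available when $J\h\neq\h$; in particular the $3$-step filiform nilradical $(-24,-34,0,0)$ cannot be excluded by any $2$-step argument, and the paper disposes of it instead by quoting that the only six-dimensional solvable algebra with that nilradical is $\g_{6.28}$, which is not unimodular. For $\h\cong\mathfrak{h}_3\oplus\R$ with $J\h\neq\h$ the paper's argument is a case analysis on $\dim(J(\mathfrak{z}(\h))\cap\h)\in\{0,1,2\}$: the cases $0$ and $2$ are eliminated by exhibiting a nilpotent ideal strictly containing $\h$ (contradicting maximality of the nilradical), and the case $1$ by computing a single component of $dc$ that cannot vanish. None of this appears in your plan; you correctly identify it as the main obstacle, but without it there is no proof.

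Two further points are asserted rather than proved. In the invariant abelian case, the identification with $\tau_{3,0}\times\tau_{3,0}$ is not just ``reading off the rotation blocks'': you must deal with $V=[U,JU]$, which need not vanish, and you must rule out a common kernel block of $A$ and $B$ (equivalently $\mathfrak{z}(\g)\cap\h=0$), since otherwise some nontrivial combination $\alpha A+\beta B$ would vanish and $\h+\R(\alpha U+\beta JU)$ would be a larger nilpotent ideal. The paper settles both via the proof of Corollary \ref{thm:abelian}(iv) (replacing the complement of $\h$ so that $V=0$, whence $\h=[\g,\g]$) and then invokes the two-step solvable classification of \cite{FS,FS2}; your remark that unimodularity is automatic is correct but not the substantive issue. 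Similarly, the case $\h\cong\mathfrak{h}_3\oplus\R$ with $J\h=\h$ and the reduction of the non-invariant abelian case to $\g_{5.35}^{-2,0}\oplus\R$ are described with ``I expect to derive'' and ``should leave essentially only''; the ingredients you list (derivations preserving $\h^1$ and $\mathfrak{z}(\h)$, integrability, $[A,B]=\ad_V|_\h$, unimodularity) are indeed the ones the paper combines to produce a nilpotent ideal strictly containing $\h$, but as written the proposal is a roadmap rather than a proof.
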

\begin{proof}
We discuss separately the cases $J\h =\h$ and $J\h \neq \h$.\\
Assume that $\g$ is endowed with a SKT structure $(J, \langle \cdot,\cdot \rangle)$ such that $J\h =\h$. By Theorem \ref{theorem:sktinvariant}, $\h$ is a $4$-dimensional nilpotent SKT Lie algebra, so we have that either $\h= \R^4$ or $\h = \mathfrak{h}_3 \oplus \R$.  If $\h$ is abelian, then one can easily prove that $\mathfrak{z}(\g) \cap \h = 0$ (otherwise we get a contradiction with the fact that $\h$ has codimension $2$) and so by the proof of statement \emph{(iv)} in Corollary \ref{thm:abelian}, we get that $\h=[\g,\g]$.  The classification follows by \cite{FS,FS2} and the Lie algebra $\g$ has to be isomorphic to $\tau_{3,0} \times \tau_{3,0}$.\\
Now we deal to the case $\h = \mathfrak{h}_3 \oplus \R$, proving that this case cannot occur. 
Assume by contradiction that $\g$ admits a SKT structure $(\langle \cdot, \cdot \rangle, J)$ such that $J \h =\h$, then 
\[
\g = \h \oplus \h^\perp= \mathfrak{z}(\h) \oplus \mathfrak{z}(\h) \oplus \h^\perp.
\]
Let $e_1$ be a generator of $\h^1=[\h,\h]\subset \mathfrak{z}(\h)$ which, up to rescaling, we may assume to be unitary. Then an orthonormal basis of $\g$ is provided by $\cal{B}=\{e_{1},e_{2}=Je_{1},e_3,e_4=Je_{3},e_{5},e_{6}=Je_{5}\}$, where $\{e_1,e_{2}\}$ is a basis of $ \mathfrak{z}(\h)$, $\{e_1,e_{2}\}$ is a basis of $  \mathfrak{z}(\h)^{\perp}$ and $\{e_5,e_6\}$ is a basis of $\h^\perp$. \\
The Lie algebra $\g$ is completely determined by the data 
\[
[e_5,X]=AX, \ [e_6,X]=BX, \ \forall X \in \h\ [e_3,e_4]=\eta e_1, \ \ [e_5,e_6]=V,
\]
where $\eta \in \R \setminus \{0\}$, $A=ad_{e_5}\vert_{\h}, B=ad_{e_6}\vert_{\h}$ are derivations of $\h$ satisfying $[A,B]=ad_{V}\vert_{\h}$.
We have already observed in the first section that we may decompose $A$ and $B$ as 
\[
A=
\begin{pmatrix}
A_{\mathfrak{z}(\h)} & \ast_A \\
0 & A_{\mathfrak{z}(\h)^\perp}
\end{pmatrix} \ \ \text{and} \ \ 
B=\begin{pmatrix}
B_{\mathfrak{z}(\n)} & \ast_B \\
0 & B_{\mathfrak{z}(\h)^\perp}
\end{pmatrix},
\] 
with $A_{\mathfrak{z}(\h)}, B_{\mathfrak{z}(\h)} \in \mathfrak{so}(\mathfrak{z}(\h))$ being such that $[A_{\mathfrak{z}(\h)} ,J_{\mathfrak{z}(\h)} ]=[B_{\mathfrak{z}(\h)} ,J_{\mathfrak{z}(\h)} ]=0$ by Theorem \ref{theorem:sktinvariant}. \\
Moreover, since $A$ and $B$ are derivations of $\h$, $A \h^1 \subset \h^1$ and $  B \h^1 \subset \h^1$, and so $A_{\mathfrak{z}(\h)}=B_{\mathfrak{z}(\h)}=0$.  In particular, this forces $\tr(A_{\mathfrak{z}(\h)^\perp})=\tr(B_{\mathfrak{z}(\h)^\perp})=0$.\\
Let 
\[
A_{\mathfrak{z}(\h)^\perp}= \begin{pmatrix} 
a_{33} & a_{34} \\
a_{43} & -a_{33}
\end{pmatrix} \ \ \text{and} \ \
B_{\mathfrak{z}(\h)^\perp}= \begin{pmatrix} 
b_{33} & b_{34} \\
b_{43} & -b_{33}
\end{pmatrix}.
\]
Exploiting that $A_{\mathfrak{z}(\h)}=B_{\mathfrak{z}(\h)}=0$, the integrability condition $[J_\h,A]J_\h+[J_\h,B]=0$ and the relation $[A,B]=ad_{V}\vert_{ \h}$ yield
\[
\begin{split}
&[J_\h,A_{\mathfrak{z}(\h)^\perp}]J_\h+[J_\h,B_{\mathfrak{z}(\h)^\perp}]=0 \iff 2a_{33}=b_{34}+b_{43}, \ \ 2b_{33}=-(a_{34}+a_{43}),\\
&[A_{\mathfrak{z}(\h)^\perp},B_{\mathfrak{z}(\h)^\perp}]=0 \iff 2a_{33}(b_{34}+b_{43}) - 2b_{33}(a_{34}+a_{43})=0. \\
\end{split}
\]
It is immediate to observe that the two conditions together imply that $A_{\mathfrak{z}(\h)^\perp}$ and $B_{\mathfrak{z}(\h)^\perp}$ are skew. 
If any of $a_{34}$ or $b_{34}$ is zero, then the corresponding matrix would be nilpotent, which is a contradiction with the hypothesis that $\h$ has codimension $2$. Hence we must have $a_{34}, b_{34} \neq 0$. Moreover, if we consider the subspace generated by $\{e_1,\dots,e_{4},e_5-\frac{a_{34}}{b_{34}}e_6\}$, then it is a nilpotent ideal which strictly contains $\h$, providing the contradiction. \\
Now, we consider the case of $J\h \neq \h$. In this case, the four-dimensional nilradical is isomorphic to one of $\R^4, \mathfrak{h}_3 \oplus \R$ and $ \h_4=(-24,-34,0,0)$. Moreover, the only six-dimensional solvable Lie algebra with nilradical $\h_4$ is $\g_{6.28}=(0,0,46-13-2.25,56-24,15,-16+26)$ (see \cite{RT}), which is not unimodular. \\
Hence, as in the previous case, we may restrict to consider either $\h = \mathfrak{h}_3 \oplus \R$ or $\h = \R^4$. We start by considering the first case, which is more involved. \\
Firstly, we prove that if $\g$ admits a complex structure $J$ such that $J\h\neq \h$,  then $\dim(J (\mathfrak{z}(\h)) \cap \h)=1$. Since the center $\mathfrak{z}(\h)$ has dimension $2$, it suffices to check that if $\dim(J (\mathfrak{z}(\h)) \cap \h)=0,2$ we get a contradiction. \\
We start by considering $\dim(J (\mathfrak{z}(\h)) \cap \h)=0$. Let $Z_1$ be a generator of $\h^1=[\h,\h]$, and let $Z_2 $ be such that $\mathfrak{z}(\h)=\text{span}_\R \langle Z_1,Z_2 \rangle$. Since $\g=\h + J\h$, we may fix a basis $\{Z_1,Z_2,X,JX,JZ_1,JZ_2\}$ of $\g$, with $X \in \h_J=\h \cap J\h$. \\
Observe that since $ad_{JZ_{1}}\vert_{\h},ad_{JZ_{2}}\vert_{\h}$ are derivations of $\h$, they preserve both $\h^1$ and $\mathfrak{z}(\h)$. Exploiting this fact, the integrability of $J$ and the inclusion $\g^1 \subset \h$, we get the following structure equations
\[
\begin{split}
&[X,JX]=\eta Z_1,\ \ [Z_1,JZ_1]=a_{11} Z_1,  \ \
[Z_1,JZ_2]=a_{12} Z_1, \ \ [Z_2,JZ_1]=a_{12} Z_1, \\
&[Z_2,JZ_2]=b_{12} Z_1 +b_{22} Z_2, \ \ [X,JZ_1]= a_{33} X+a_{43} JX, \ \
[X,JZ_2]= b_{33} X+b_{43} JX,\\ 
&[JX,JZ_1]= -a_{43} X+a_{33} JX,\ \
[JX,JZ_2]= -b_{43} X+b_{33} JX, \ \  [JZ_1,JZ_2]=0. \\
\end{split}
\]
Computing $$ad_{JZ_1}[X,JX]=[ad_{JZ_1} X,JX]+[X, ad_{JZ_1} JX] $$ and $$ ad_{JZ_2}[X,JX]=[ad_{JZ_2} X,JX]+[X, ad_{JZ_2} JX]$$ we further have $a_{11}=2a_{33}$ and $a_{12}=2b_{33}$. \\
In particular, by the unimodularity condition, this leads to $a_{11}=a_{33}=0$ and $b_{22}=-4b_{33}$. \\
Moreover, $[ad_{JZ_{1}},ad_{JZ_{2}}]=ad_{[JZ_1,JZ_2]}=0$ if and only of  $b_{33}=0$. 
\\
To sum up, the Lie algebra $\g$ is determined by the data 
\begin{equation*}
\begin{split}
ad_{JZ_{1}}\vert_{\h}=\begin{pmatrix}
0 & 0 & 0 & 0 \\
0 & 0 & 0 & 0 \\
0 & 0 & 0 & a_{43} \\
0 & 0 & -a_{43} & 0\\
\end{pmatrix},  \ \ ad_{JZ_{2}}\vert_{\h}=\begin{pmatrix}
0 & -b_{12} & 0 & 0 \\
0 & 0 & 0 & 0 \\
0 & 0 & 0 & b_{43} \\
0 & 0 & -b_{43} &0 \\
\end{pmatrix}, \  \ [X,JX]=\eta Z_1 \neq 0.
\end{split}
\end{equation*}
Observe that we must have $a_{43} \neq 0$. As $a_{43} \neq 0$, we may consider the subspace generated by $\{Z_1,Z_2,X,JX,JZ_2-\frac{b_{43}}{a_{43}}JZ_1 \}$, that is a nilpotent ideal which strictly contains $\h$. The contradiction follows. \\
Let us now consider the case of $\dim(J (\mathfrak{z}(\h)) \cap \h)=2$. In this case $\h_J=\mathfrak{z}(\h)$, so we may fix a basis $\{Z, JZ\}$ of $\mathfrak{z}(\h)$, with $Z$ being a generator of $\h^1$. Since $\g=\h+ J\h$, we may complete $Z,JZ$ to a basis $\{Z,JZ,X,Y,JX,JY\}$ of $\g$, where $\{Z,JZ,X,Y\}$ is a basis of $\h$. Analogously to the previous case, we obtain the following structure equations 
\[
\begin{split}
&[X,Y]=\eta Z,\ \ [Z,JX]=a_{11} Z, \ \
[Z,JY]=b_{11} Z, \ \ [JZ,JX]=a_{11} JZ, \\
&[JZ,JY]=b_{11} JZ, \ \ [X,JX]=a_{13}Z+a_{23}JZ+ a_{33} X+a_{43} Y, \\
&[X,JY]= b_{13}Z+b_{23}JZ+ a_{34} X+a_{44} Y, \ \
 [Y,JX]= a_{14}Z+a_{24}JZ+ a_{34} X+a_{44} Y,\\
&[Y,JY]= b_{14}Z+b_{24}JZ+ b_{34} X+b_{44} Y, \ \
  [JX,JY]=(\eta +(a_{24}-b_{23}))Z+(b_{13}-a_{14})JZ. \\
\end{split}
\]
Exploiting $$ad_{JX}[X,Y]=[ad_{JX} X,Y]+[X, ad_{JX} Y] \quad  {\text {and}}  \quad  ad_{JY}[X,Y]=[ad_{JY} X,Y]+[X, ad_{JY} Y]$$ we further have $a_{11}=a_{33}+a_{44}$ and $b_{11}=a_{34}+b_{44}$. This identities combined with the unimodularity of $\g$ lead to $a_{11}=0$, $a_{33}=-a_{44}$ , $b_{11}=0$ and $a_{34}=-b_{44}$. \\
We hence get that
\begin{equation*}
\begin{split}
&ad_{JX}\vert_{\h}=\begin{pmatrix}
0 & 0 & -a_{13} & -a_{14} \\
0 & 0 & -a_{23} & -a_{24} \\
0 & 0 & -a_{33} & -a_{34} \\
0 & 0 & -a_{43} & a_{33}\\
\end{pmatrix},  \ \ ad_{JY}\vert_{\h}=\begin{pmatrix}
0 & 0 & -b_{13} & -b_{14} \\
0 & 0 & -b_{23} & -b_{24} \\
0 & 0 & -a_{34} & -b_{34} \\
0 & 0 & a_{33} & a_{34}\\
\end{pmatrix}, \\ 
&[X,Y]=\eta Z, \ [JX,JY]=(\eta +(a_{24}-b_{23}))Z+(b_{13}-a_{14})JZ \in \mathfrak{z}(\h)=\mathfrak{z}(\g).
\end{split}
\end{equation*}
Since $[JX,JY]\in \mathfrak{z}(\h)=\mathfrak{z}(\g)$, as before $[ad_{JX},ad_{JY}]=ad_{[JX,JY]}=0$. With the relations coming from this identity, if one compute the spectrum of $ad_{JX}$ one obtains that $ad_{JX}$ has only the eigenvalue $0$, so it is nilpotent. Hence, the subspace generated by $\{Z,JZ,X,Y,JX\}$ is a nilpotent ideal which strictly contains $\h$, giving a contradiction. \\
Hence, we may restrict to consider the case of $\dim(J (\mathfrak{z}(\h)) \cap \h)=1$ and we prove that if $\dim(J (\mathfrak{z}(\h)) \cap \h)=1$, then $\g$ cannot admit any SKT structure $(\langle \cdot,\cdot \rangle, J)$ such that $J\h \neq \h$. \\
 Let $Z_1,Z_2$ be a basis of $\mathfrak{z}(\h)$ such that $JZ_1 \in \h$ and $JZ_2 \notin \h$. Since $\g=\h+J\h$, there always exists a vector $X \in \h$ such that $\{Z_1,JZ_1,Z_2,X,JZ_2,JX\}$ is a basis of $\g$. We proceed as before. Indeed, using that $ad_{JZ_2},ad_{JX}$ preserve $\mathfrak{z}(\h)$, the integrability of $J$, and the inclusion $\g^1 \subset \h$, we get 
\[
\begin{split}
&[JZ_1,X]=\eta_1 Z_1+\eta_2 Z_2,\ \ [Z_1,JZ_2]=a_{11} Z_1,  \ \ 
[Z_1,JX]=b_{11} Z_1-\eta_2 Z_2, \\
& [JZ_1,JZ_2]=a_{11} JZ_1,  \ \ 
[JZ_1,JX]=(\eta_1+b_{11}) JZ_1, \ \ [Z_2,JZ_2]= a_{13} Z_1+a_{33} Z_2, \\
&[Z_2,JX]= b_{13} Z_1+a_{34} Z_2, \ \ [X,JZ_2]= a_{14} Z_1+a_{24} JZ_1+a_{34}Z_2,\\
&[X,JX]= b_{14} Z_1+b_{24} JZ_1+b_{34}Z_2+b_{44} X, \ \  [JZ_2,JX]=a_{24}Z_1+(b_{13}-a_{14})JZ_1. \\
\end{split}
\]
By the unimodularity of $\g$, $a_{33}=-2a_{11}$ and $2b_{11}+\eta_{1}+a_{34}+b_{44}=0$. Observe that if $a_{11}=0$ then $ad_{JZ_{1}}$ is a strictly upper triangular matrix, and hence, nilpotent. Hence $a_{11} \neq 0$, and, exploiting $ad_{JZ_2}[JZ_1,X]=[ad_{JZ_2} JZ_1,X]+[JZ_1, ad_{JZ_2} X]$, we get $\eta_2=0$. \\
Analogously, if one computes $ad_{JX}[JZ_1,X]=[ad_{JX} JZ_1,X]+[JZ_1, ad_{JX} X]$, then one obtains $b_{44}=-\eta_1$. Plugging this identity in the unimodularity condition $2b_{11}+\eta_{1}+a_{34}+b_{44}=0$  this also gives $2b_{11}+a_{34}=0$. \\
To sum up
\begin{equation*}
\begin{split}
&ad_{JZ_{2}}\vert_{\h}=\begin{pmatrix}
-a_{11} & 0 & -a_{13} & -a_{14} \\
0 & -a_{11} & 0 & -a_{24} \\
0 & 0 & 2a_{11} &2b_{11} \\
0 & 0 & 0 & 0\\
\end{pmatrix},  \ \ ad_{JX}\vert_{\h}=\begin{pmatrix}
 -b_{11}& 0 & -b_{13} & -b_{14} \\
0 & -(\eta_1+b_{11}) & 0 & -b_{24} \\
0 & 0 & 2b_{11} & -b_{34} \\
0 & 0 & 0 & \eta_1\\
\end{pmatrix}, \\ &[JZ_2,JX]=a_{24}Z_1+(b_{13}-a_{14})JZ_1.
\end{split}
\end{equation*}
Consider any $J$-hermitian inner product $\langle \cdot,\cdot \rangle$ and denote by $c$ the Bismut torsion  $3$-form of $(J, \langle \cdot,\cdot \rangle, J)$. Then, $
dc(Z_1,JZ_1,X,JZ_2)=-2a_{11}\eta_1 \norm{Z_1}^2 \neq 0.$
Indeed, this can be zero only if $\eta_1=0$. Moreover in this case $\h = \R^4$, giving a contradiction. \\
Hence, we have proved that if $\g$ is endowed with a SKT structure $(\langle \cdot,\cdot \rangle, J)$ such that $J\h \neq \h$, then $\h$ must be abelian. 
We restrict to consider $\h= \R^4$.\\
Let us assume that $\g$ admits a SKT structure $(\langle \cdot,\cdot \rangle, J)$ such that $J\h \neq \h$, with $\h =\R^4$. We fix an orthonormal basis $\cal{B}=\{e_1,\dots,e_{6}\}$ of $\g$ such that $\{e_1,e_{2}\}$ is an orthonormal basis of $\h_J$ such that $Je_1=e_2$ and $\{e_{3},\dots,e_{6}\}$ is an orthonormal basis of $(\h_J)^\perp$ as in Remark \ref{rem:cmpstr}, namely, with respect to $\cal{B}$ the complex structure $J$ can be written as 
\[
J= \begin{pmatrix}
0 & 1 & 0 & 0 & 0 & 0\\
-1 & 0 & 0 & 0 & 0 & 0 \\ 
0& 0 & 0 & J_{12} & 0 & J_{14} \\
0 & 0 & -J_{12} & 0 & -J_{14} & 0 \\
0 & 0 & 0 & J_{14} & 0 & -J_{12} \\
0 & 0 &-J_{14} & 0 & J_{12} & 0 \\
\end{pmatrix}.
\]
By Proposition \ref{proposition:integrability}, $\h_J$ is an ideal of $\g$ and $[A_{\h_{J}}, J]=[B_{\h_{J}}, J]=0$, where $A_{\h_{J}}, B_{\h_{J}}$ denote the restrictions of $A:=ad_{e_{5}}\vert_{\h}, B:=ad_{e_{6}}\vert_{\h}$ to $\h_J$. By the integrability of $J$, we have that
\begin{equation*} 
\begin{split}  
&A= \begin{pmatrix}
\begin{array}{c|c}
\begin{matrix}
a_{11} & a_{12}\\
-a_{12} & a_{11}
 \end{matrix} & \begin{matrix} \ast \end{matrix} \\
\hline
 0 & \begin{matrix} c_{11} & c_{12} \\
 c_{21} & c_{22} \end{matrix} 
\end{array}
\end{pmatrix}, \ 
B= \begin{pmatrix}
\begin{array}{c|c}
\begin{matrix}
b_{11} & b_{12}\\
-b_{12} & b_{11} \end{matrix} & \begin{matrix}  \ast \end{matrix} \\
\hline
 0 & \begin{matrix} d_{11} & -c_{11} \\
 d_{21} & -c_{21} \end{matrix} 
\end{array}
\end{pmatrix}, \\
&[e_5,e_6]=V_{\h_{J}}- \frac{J_{12}}{J_{14}}(c_{12}-d_{11}) e_3- \frac{J_{12}}{J_{14}}(c_{22}-d_{21})e_4,
\end{split}
\end{equation*}
with $2a_{11}+c_{11}+c_{22}=0$ and $2b_{11}+d_{11}-c_{21}=0$ by the unimodularity of $\g$. In the following we will denote by 
\[
C:=\begin{pmatrix} c_{11} & c_{12} \\
 c_{21} & c_{22} \end{pmatrix} 
\ \text{and} \ D:= \begin{pmatrix} d_{11} & -c_{11} \\
 d_{21} & -c_{21} \end{pmatrix}.
\]
Since by the Jacobi identity $[A,B]=0$, then also $[C,D]=0$.\\
By the  SKT condition,  we get
\[
\begin{split}
dc(e_1,e_2,e_3,e_6)=&2 J_{14} (-b_{11} c_{21}+a_{11}d_{21}) =0, \\
dc(e_1,e_2,e_3,e_5)=&2 J_{14} (b_{11} c_{22}+a_{11}c_{21})=0, \\
dc(e_1,e_2,e_4,e_5)=&- 2 J_{14} (b_{11} c_{12}+a_{11}c_{11})=0\\
dc(e_1,e_2,e_4,e_6)=&2 J_{14} (b_{11} c_{11}-a_{11}d_{11})=0\\
\end{split}
\]
Since $J_{14} \neq 0$, the coefficients of $A$ and $B$ must obey to 
\begin{equation} \label{eqn:relations}
\begin{split}
&b_{11} c_{21}-a_{11}d_{21}=0,  \ \ 
b_{11} c_{22}+a_{11}c_{21}=0, \ \  
b_{11} c_{12}+a_{11}c_{11}=0, \ \
b_{11} c_{11}-a_{11}d_{11}=0. 
\end{split}
\end{equation}
We distinguish two cases depending on whether $a_{11}$ is zero or not. We will do in details the first case. In fact, the second one is analogous. \\
Let $a_{11}=0$. The conditions \eqref{eqn:relations} becomes
\[
b_{11} c_{21}=0,\ \ b_{11} c_{22}=0, \ \ b_{11} c_{12}=0, \ \ b_{11} c_{11}=0,
\]
and furthermore $c_{11}=-c_{22}$ in order to have $\tr (A)=0$. \\ 
We claim that $b_{11} \neq 0$. Indeed, assume by contradiction that $b_{11}=0$. Then \eqref{eqn:relations} are satisfied and $d_{11}=c_{21}$ by the unimodularity condition. Using $[C,D]=0$, one can show that $C$ and $D$ are nilpotent matrices and so we must have $a_{12},b_{12} \neq 0$. Indeed, if  for instance $a_{12}=0$, then the subspace generated by $\{e_{1},e_{2},e_{3},e_{4},e_{5}\}$ would be a nilpotent ideal which strictly contains $\h$, which is a contradiction with the maximality of the nilradical.\\
 If $a_{12},b_{12} \neq 0$, then we may consider $e'_5=e_5-\frac{a_{12}}{b_{12}}e_6$ with 
\[ 
ad_{e'_{5}}\vert_{\h}=
\begin{pmatrix}
\begin{array}{c|c}
0 &\ast \\
\hline
0  & E
\end{array}
\end{pmatrix},
\]
where $E=C-\frac{a_{12}}{b_{12}} D$. Moreover, since $C,D$ are nilpotent and $[C,D]=0$, so is $E$ and, hence, also $ad_{e'_{5}}\vert_{\h}$. Therefore, if we consider the ideal $\{e_{1},e_{2},e_{3},e_{4},e'_{5}\}$, then again this is a nilpotent ideal which strictly contains the nilradical, proving the claim. \\
Since $b_{11} \neq 0$, by \eqref{eqn:relations} we must have that 
\begin{equation*} 
\begin{split}  
&A= \begin{pmatrix}
\begin{array}{c|c}
\begin{matrix}
0 & a_{12}\\
-a_{12} & 0
 \end{matrix} & \begin{matrix}  \ast \end{matrix} \\
\hline
 0 & \begin{matrix} 0 & 0\\
 0 & 0 \end{matrix} 
\end{array}
\end{pmatrix}, \ \
B= \begin{pmatrix}
\begin{array}{c|c}
\begin{matrix}
b_{11} & b_{12}\\
-b_{12} & b_{11} \end{matrix} & \begin{matrix}  \ast  \end{matrix} \\
\hline
0  & \begin{matrix}  -2b_{11} & 0 \\
 d_{21} & 0 \end{matrix} 
\end{array}
\end{pmatrix}, \\
&  [e_5,e_6]=V_{\h_{J}}-2 \frac{J_{12}}{J_{14}} b_{11} e_3+ \frac{J_{12}}{J_{14}}d_{21}e_4,
\end{split}
\end{equation*}
with $a_{12}\neq 0$ (otherwise $A$ would be nilpotent).
In order to kill the components of $V$ along $e_3$ and $e_4$, we take $e'_5:=e_5+\frac{J_{12}}{J_{14}} e_3$. In such a way  $[e'_5,e_6] \in \h_J$ and $\{e'_5,e_6\}$ define a new complement of $\h$ inside $\g$. Observe that since $\h$ is abelian, $A':=ad_{e'_{5}}\vert_{\h}=A$. \\
Let $e'_3$ and $e'_4$ be eigenvectors of $B$ associated to the eigenvalues $-2b_{11}$ and $0$, respectively. Since the eigenspaces $V_{-2b_{11}}$ and $V_0$ are $1$-dimensional and $[A,B]=0$, we must have  that $e'_3$ and $e'_4$ are also eigenvectors of $A$ with eigenvalue $0$. Hence, with respect the new basis $\{e_1,e_2,e'_3,e'_4,e'_5, e_6\}$ the Lie algebra $\g$ is determined by the data
\begin{equation*} 
\begin{split}  
&A'= \begin{pmatrix}
\begin{array}{c|c}
\begin{matrix}
0 & a_{12}\\
-a_{12} & 0
 \end{matrix} & 0\\
\hline
 0 & \begin{matrix} 0 & 0 \\
 0 & 0 \end{matrix} 
\end{array}
\end{pmatrix}, \ \
B= \begin{pmatrix}
\begin{array}{c|c}
\begin{matrix}
b_{11} & b_{12}\\
-b_{12} & b_{11} \end{matrix} & 0 \\
\hline
 0 & \begin{matrix} -2b_{11} &0 \\
0 & 0 \end{matrix} 
\end{array}
\end{pmatrix}, \ \ [e'_5,e_6]\in \h_{J}.
\end{split}
\end{equation*}
Finally,  let $X \in \h_J$ be such that $A'X=-[e'_5,e_6]$ (observe that it is possible since $\text{Im}A'=\h_J$). The basis $\{e'_1,\dots,e'_5,e'_6=e_6+X\}$ is such that $B:=ad_{e'_{6}}\vert_{\h}=B'$ and  $[e'_5,e'_6]=0$. The isomorphism between $\g$ and $\g_{5.35}^{-2,0} \oplus \R$ is immediate. 
\end{proof}
\begin{remark}
The Lie algebra $\g_{5.35}^{-2,0}$ firstly appears in \cite{BC}.
\end{remark}
\begin{remark}
The connected and simply connected solvable Lie groups corresponding to $\tau_{3,0} \times \tau_{3,0}$ and $\g_{5.35}^{-2,0} \oplus \R$ admit lattices. The former case is trivial. For the latter, an explicit construction of a lattice is done in the next section (Theorem \ref{thm:lattice}).
\end{remark}
\begin{corollary}
A  unimodular six-dimensional solvable Lie algebra with codimension $2$ nilradical $\h$ admits a generalized K\"ahler structure $(J_\pm, \langle \cdot,\cdot \rangle)$ if and only if $\g$ is isomorphic to one of the following Lie algebras
\begin{flalign*} 
&\begin{aligned}
 &\tau_{3,0} \times \tau_{3,0}=(-f^{25},f^{15},-f^{46},f^{36},0,0),\\
 &\g_{5.35}^{-2,0} \oplus \R=(2f^{15},-f^{25}-f^{36},-f^{35}+f^{26},0,0,0).
\end{aligned} &&
\end{flalign*}
An explicit  example of  generalized K\"ahler structure is given respectively by 
$$J_\pm f_1= \pm f_2, \ J_\pm f_3=\pm f_4, \ J_\pm f_5=\pm f_6, \  \langle \cdot,\cdot \rangle=\sum_{i=1}^6 (f^i)^2$$ 
$$J_\pm f_1=f_5, \ J_\pm f_2=\pm f_3, \ J_\pm f_4=f_6, \  \langle \cdot,\cdot \rangle=\sum_{i=1}^6 (f^i)^2.$$ 
\end{corollary}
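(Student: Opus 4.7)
The plan is to prove both implications. The forward direction is immediate from Theorem \ref{thm:classif}: any generalized K\"ahler structure $(J_\pm, \langle\cdot,\cdot\rangle)$ on $\g$ produces in particular two SKT Hermitian structures $(J_+, \langle\cdot,\cdot\rangle)$ and $(J_-, \langle\cdot,\cdot\rangle)$, so $\g$ must be isomorphic to either $\tau_{3,0}\times\tau_{3,0}$ or $\g_{5.35}^{-2,0}\oplus\R$.

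For the converse I would verify the two explicit structures directly. On $\tau_{3,0}\times\tau_{3,0}$ the assignment satisfies $J_-=-J_+$, so the generalized K\"ahler condition reduces to $(J_+,\langle\cdot,\cdot\rangle)$ being K\"ahler: both Bismut torsions then vanish identically, making $c_+=-c_-$ and $dc_+=0$ trivial. Writing $\omega_+=f^{12}+f^{34}+f^{56}$, a direct application of the structure equations yields $d\omega_+=0$ (each summand is closed separately), so $J_+$ is K\"ahler as required.

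On $\g_{5.35}^{-2,0}\oplus\R$ the structure is genuinely non-trivial: $J_+$ and $J_-$ coincide on $f_1,f_4,f_5,f_6$ but differ by a sign on $\mathrm{span}(f_2,f_3)$. The plan is to (i) check $J_\pm^2=-I$ and Hermitian compatibility, both immediate from the definition on the orthonormal basis, (ii) verify integrability of $J_\pm$ via a direct computation of the Nijenhuis tensor on the basis vectors, (iii) compute the fundamental forms $\omega_\pm=f^{15}\pm f^{23}+f^{46}$ and differentiate using the structure equations to obtain $d\omega_\pm=\pm 2f^{235}$, and (iv) compute $c_\pm=-d^c_\pm\omega_\pm$. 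Since $J^*_+$ and $J^*_-$ act identically on $f^5$ but with opposite signs on $f^2$ and $f^3$, the composition of these two sign reversals in the wedge $J^*_\pm f^2\wedge J^*_\pm f^3\wedge J^*_\pm f^5$ yields the required identity $c_+=-c_-$. The remaining SKT condition $dc_+=0$ is already guaranteed by Theorem \ref{thm:classif}.

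The only genuine subtlety lies in the sign bookkeeping of step (iv): one must track how the opposite signs of $J_\pm$ on $\mathrm{span}(f_2,f_3)$ propagate through the wedge products appearing in $c_\pm$ in order to produce the cancellation $c_+=-c_-$, rather than $c_+=c_-$. Once this is verified, together with the SKT property established in Theorem \ref{thm:classif}, the corollary follows.
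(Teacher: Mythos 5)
Your proposal is correct and matches what the paper does implicitly: the corollary is stated without a separate proof precisely because the forward direction is the observation that a generalized K\"ahler metric is in particular SKT (so Theorem \ref{thm:classif} applies), and the converse is the direct verification of the two displayed structures. Your sign bookkeeping in step (iv) does work out — one finds $c_\pm = J_\pm d\omega_\pm = \pm 2 f^{123}$, where the two sign reversals of $J_-^*$ on $f^2$ and $f^3$ cancel each other and the overall minus sign producing $c_+=-c_-$ comes from $d\omega_-=-d\omega_+$ — and the remaining checks (integrability of $J_-$, closedness of $f^{123}$) are routine.
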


\section{Construction of new SKT and generalized K\"ahler Lie algebras} \label{section5}
This section is devoted to construct new examples of SKT and generalized K\"ahler solvable Lie algebras. In particular, we exhibit examples of SKT solvable Lie algebras $(\g, \langle \cdot,\cdot \rangle,J)$ with $J\h=\h$ and  $J\h \neq \h$ in Examples \ref{ex:1}, \ref{ex:2}, and  in Examples \ref{ex:3}, \ref{ex:4}, respectively.
\begin{example} \label{ex:1}
By Theorem \ref{thm:classif}, we have that the only six dimensional SKT Lie algebra $(\g,  J, \langle \cdot,\cdot \rangle)$ such that $J\h=\h$ is  $\tau_{3,0} \times \tau_{3,0}$. Moreover, as $\tau_{3,0} \times \tau_{3,0}$ has trivial center, there exists a (possibly different) $J$-Hermitian structure which is K\"ahler, by Corollary \ref{thm:abelian} \emph{(iv)}. \\
In higher dimension, this is no longer true. Indeed, let us consider the eight dimensional solvable Lie algebra $\g^b_8, $ with $b \neq 0$, defined by the structure equations
\[
\begin{split}
&[e_1,e_7]=be_2, \quad [e_2,e_7]=-be_1,\quad [e_3,e_8]=be_4, \quad [e_{4},e_{8}]=-be_{3},\quad
[e_{7},e_{8}]=e_{5}+e_{6}, \\
\end{split}
\]
endowed with the complex structure $J$,  given by
\[
Je_{1}=e_{2}, \ \ Je_3=e_4, \ \ Je_5=e_6, \ \ Je_7=e_8.
\]
In particular, $\h=\text{span}_\R \langle e_1,\dots,e_6 \rangle$ and $J\h=\h$. Let us consider the inner product $\langle \cdot,\cdot \rangle$ with respect to which the basis  $\{e_1,\dots,e_{6}\}$ is orthonormal. Set $A:=ad_{e_{7}}\vert_\h$ and $B:=ad_{e_{8}}\vert_\h$. By Corollary \ref{thm:abelian} the Hermitian structure $J, (\langle \cdot, \cdot \rangle)$ is SKT and it is (non-flat) Chern Ricci flat by Theorem \ref{theorem:sktinvariant}. \\
We show that $(\g^b_8,J)$ does not admit any K\"ahler structure. 
Let $\{Z_1,\dots,Z_4\}$ be a unitary basis of $(\g^{b}_{8})^{(1,0)}$, with dual basis $\{\varphi^1,\dots,\varphi^4\}$. Then
\[
d\varphi^1=-\frac{ib}{2} (\varphi^{14}+\varphi^{1 \bar4}), \ d\varphi^2=-\frac{b}{2} (\varphi^{24}-\varphi^{2 \bar4}), \ \ d\varphi^3=-\frac{(1+i)}{2} \varphi^{4 \bar4}, \ \ d\varphi^4=0.
\]
Let us write the generic fundamental form
$
\omega=\frac{i}{2} \sum_{j=1}^4 F_{j \bar j} \varphi^{j \bar j}+\frac{1}{2} \sum_{j=1}^4 (F_{j \bar k} \varphi^{j \bar k}-\overline{F_{j \bar k}} \varphi^{k \bar j}),
$
with $F_{j \bar j} \in \R_{>0}$. Since $
d\omega(Z_3, Z_4, \overline Z_{4})=\frac{1-i}{2} F_{3\bar3},
$
we have that $d\omega \neq 0$.\\
\end{example}
\begin{theorem} \label{thm:newexamples}
Let $(\h, \langle \cdot , \cdot \rangle_\h, J_\h)$ be a SKT nilpotent Lie algebra and let $(\mathfrak{a}, J_\mathfrak{a},  \langle \cdot , \cdot \rangle_\mathfrak{a}, J_\mathfrak{a})$ an abelian Hermitian Lie algebra of dimension $2k$. Consider a Lie algebras homomorphism
\[
 \theta: \mathfrak{a} \to Der(\h)
\]
such that $\theta(\mathfrak{a}) \subset \mathfrak{sp}(\h) \cap  \mathfrak{so}(\h)$, namely $\theta(U)^t=-\theta(U)$ and $J_\h \theta(U)+\theta(U)^t J_\h=0$, $\forall U \in \mathfrak{a}$. Then  $(\g=\h \rtimes_\theta \mathfrak{a},\langle \cdot,\cdot \rangle, J)$,  with $\langle \cdot,\cdot \rangle=\langle \cdot,\cdot \rangle_\h+ \langle \cdot,\cdot \rangle_\mathfrak{a}$ and $J=\begin{pmatrix} J_\h & 0 \\
0 & J_\mathfrak{a}\\
\end{pmatrix}$,  is  SKT.
\end{theorem}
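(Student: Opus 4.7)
The plan is twofold: first verify that the block-diagonal almost complex structure $J$ is integrable on $\g$, then compute $dd^c\omega$ by exploiting the semi-direct product structure together with the abelian nature of $\mathfrak{a}$.

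For integrability, I would first observe that $\theta(U)^t=-\theta(U)$ combined with $J_\h\theta(U)+\theta(U)^tJ_\h=0$ immediately gives $[J_\h,\theta(U)]=0$ for every $U\in\mathfrak{a}$, i.e.\ each $\theta(U)$ lies in $\mathfrak{u}(\h,\langle\cdot,\cdot\rangle_\h,J_\h)$. Since $J\h=\h$ and $J\mathfrak{a}=\mathfrak{a}$, the Nijenhuis tensor $N_J$ only has to be checked on $\h\times\h$, $\mathfrak{a}\times\mathfrak{a}$ and $\mathfrak{a}\times\h$: the first vanishes because $J_\h$ is integrable (the SKT hypothesis on $\h$ already includes this), the second because $\mathfrak{a}$ is abelian, and on the third a direct computation using $[U,X]=\theta(U)X$ together with $[J_\h,\theta(V)]=0$ shows that the four terms defining $N_J(U,X)$ cancel in pairs.

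For the SKT condition, the key observation is that since $\mathfrak{a}$ is abelian and $\g^1\subset\h$, all the brackets $[U_i,U_j]$ and the corresponding correction terms appearing in \eqref{eqn:differential} vanish, so for any $\alpha\in\Lambda^k\h^*$ extended by zero on $\mathfrak{a}$-arguments,
\[
d\alpha=d_\h\alpha+\sum_i u^i\wedge A_i^*\alpha, \qquad A_i:=\theta(U_i),
\]
where $\{u^i\}$ is a dual basis of $\mathfrak{a}^*$ (so $du^i=0$). Writing $\omega=\omega_\h+\omega_\mathfrak{a}$, one has $d\omega_\mathfrak{a}=0$ and, by the $\mathfrak{sp}(\h)$ hypothesis, $A_i^*\omega_\h(X,Y)=-\langle(J_\h A_i+A_i^tJ_\h)X,Y\rangle_\h=0$, so $d\omega=d_\h\omega_\h$. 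A direct check using $J\h=\h$ and $J\mathfrak{a}=\mathfrak{a}$ then gives $d^c\omega=d^c_\h\omega_\h$ (still viewed as a form that vanishes on $\mathfrak{a}$-arguments), and applying the same differential formula once more yields
\[
dd^c\omega=d_\h d^c_\h\omega_\h+\sum_i u^i\wedge A_i^*(d^c_\h\omega_\h).
\]
The first summand vanishes by the SKT hypothesis on $\h$. For the second, since each $A_i$ is a derivation of $\h$ commuting with $J_\h$, the operator $A_i^*$ commutes with both $d_\h$ and with the $J_\h$-pullback action on forms, hence with $d^c_\h$; combined with $A_i^*\omega_\h=0$ this gives $A_i^*(d^c_\h\omega_\h)=d^c_\h(A_i^*\omega_\h)=0$.

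The main technical obstacle is really bookkeeping rather than conceptual: one must carefully extend forms from $\h$ to $\g$ by zero, track the interaction of the differential with this extension, and verify that the joint conditions $A_i\in\mathfrak{so}(\h)$, $[A_i,J_\h]=0$ and $A_i\in Der(\h)$ kill every potentially obstructing term. No new geometric idea is required beyond the SKT property of $\h$ itself.
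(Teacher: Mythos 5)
Your proposal is correct and follows essentially the same route as the paper: integrability via $[J_\h,\theta(U)]=0$, the extended differential formula for forms on $\h$, the vanishing $A_i^*\omega_\h=0$ from the $\mathfrak{sp}(\h)\cap\mathfrak{so}(\h)$ hypothesis, and then reduction of $dd^c\omega$ to $d_\h d^c_\h\omega_\h$ plus correction terms. The only (minor) divergence is the last step: the paper checks $A_i^*c_\h=0$ by expanding the torsion in Lie brackets and using that $A_i$ is a skew derivation, whereas you obtain it more cleanly from the fact that $A_i^*$ commutes with $d_\h$ (derivation property) and with the $J_\h$-pullback ($[A_i,J_\h]=0$), hence with $d^c_\h$, reducing everything to $A_i^*\omega_\h=0$.
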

\begin{proof}
Let $\{U_1,J_\mathfrak{a} U_1,\dots,U_k,J_\mathfrak{a} U_k\}$ be an orthonormal basis of the abelian Hermitian Lie algebra $(\mathfrak{a}, J_\mathfrak{a},  \langle \cdot , \cdot \rangle_\mathfrak{a}, J_\mathfrak{a})$ with dual basis $\{u_1,J_\mathfrak{a} u_1,\dots,u_k, J_\mathfrak{a} u_k\}$. Observe that since $\g^1 \subset \h$, \ $du_i=d(J_\mathfrak{a} u_i)=0$. To lighten the notation, let us define $A_i:=\theta(u_i)$ and $B_i:=\theta(J_\mathfrak{a} u_i)$. \\
Firstly, we prove that $J$ is integrable. Exploiting that $N(J \cdot,J \cdot)=-N(\cdot,\cdot)$, one can see that it is enough to check the vanishing of $N(U_i,Y)$, for any $Y \in \h$ and $i=1,\dots,k$. We compute
\begin{align} \label{eqn:int3}
N(U_i,Y)&= [U_i,Y]+J_\h([J_\mathfrak{a} U_i,Y]+[U_i,J_\h Y])-[J_\mathfrak{a} U_i,J_\h Y]\\ \nonumber
&= (A_i+J_\h A_i J_\h+J_\h B_i-B_i J_\h) Y 
=[J_\h,A]J_\h+[J_\h,B] Y. \nonumber
\end{align}
As $A_i, B_i \in  \mathfrak{sp}(\h) \cap  \mathfrak{so}(\h)$, we get that  $0=J_\h A_i+A_i^t J_\h=J_\h A_i-A_i J_\h=[J_\h,A_i]$ and, analogously, $0=[J_\h,B_i]$. Hence \eqref{eqn:int3} vanishes, and $J$ is integrable. \\
Since $\h$ is a $J$-invariant ideal of $\g$ of even codimension, we may extend the formula \eqref{eqn:differential} in this case. Indeed, exploiting that $\mathfrak{a}$ is abelian, for any $\alpha \in \bigwedge^k \h^*$
\begin{equation} \label{eqn:differential2}
d\alpha= \sum_{i=1}^k u_i \wedge A_i^* \alpha+J_\mathfrak{a} u_i \wedge B_i^* \alpha +d_\h \alpha,
\end{equation}
where $d_\h$ stands for the differential of $\h$. \\
Let us consider the fundamental form $\omega$ of $(J, \langle \cdot,\cdot \rangle)$. Then, by construction, $$\omega=\omega_\h+\sum_{i=1}^k u_i \wedge J_\mathfrak{a} u_i.$$
Using \eqref{eqn:differential2}, we get that 
$
d\omega=d\omega_\h=\sum_{i=1}^k u_i \wedge A_i^* \omega_\h+J_\mathfrak{a} u_i \wedge B_i^*\omega_\h +d_\h \omega_\h.
$
Now, we know that $A_i, B_i \in  \mathfrak{sp}(\h) \cap  \mathfrak{so}(\h)$, so $\forall Y,Z \in \h$
\[
\begin{split}
A_i^* \omega_\h (Y,Z)&=-\omega_\h(A_i Y,Z)-\omega_\h(Y,A_iZ)=-\langle J_\h A_i Y,Z \rangle -\langle J_\h Y, A_i Z \rangle \\
&=-\langle (J_\h A_i + A_i^t J_\h) Y,Z \rangle =0,
\end{split}
\]
and, analogously, $B_i^* \omega_\h=0$. In particular, we get
$
d\omega=d_\h \omega_\h.
$
Let $c=Jd\omega=c_\h$ be the Bismut torsion. Again by \eqref{eqn:differential2},
$
dc=dc_\h=\sum_{i=1}^k u_i \wedge A_i^*c_\h+J_\mathfrak{a} u_i \wedge B_i^*c_\h,
$
as $d_\h c_\h$ vanishes since $( \langle \cdot , \cdot \rangle_\h, J_\h)$ is SKT. To conclude, we have to prove that $A_i^*c_\h=B_i^*c_\h=0$. We prove the statement for the former, since the latter is analogous. Let $Y,Z,W \in \h$. Then  
\[
\begin{split}
A_i^*c_\h(Y,Z,W)&=-c_\h(A_iY,Z,W)-c_\h(Y,A_iZ,W)-c_\h(Y,Z,A_iW) \\
&=\langle [J_\h A_i Y,J_\h Z], W \rangle + \langle [J_\h Z,J_\h W],A_i Y \rangle + \langle [J_\h W,J_\h A_i Y], Z \rangle \\
&+\langle [J_\h Y,J_\h A_i Z], W \rangle + \langle [J_\h A_iZ,J_\h W], Y \rangle + \langle [J_\h W,J_\h Y], A_iZ \rangle \\
&+\langle [J_\h Y,J_\h Z], A_i W \rangle + \langle [J_\h Z,J_\h A_iW], Y \rangle + \langle [J_\h A_iW,J_\h Y], Z \rangle.
\end{split}
\]
Exploiting that $[A_i,J_\h]=0$ and $A_i^t=-A_i$, one gets
\[
\begin{split}
A_i^*c_\h(Y,Z,W)&=\langle -A_i [J_\h Y,J_\h Z],W \rangle + \langle [ A_i J_\h Y,J_\h Z], W \rangle +\langle [J_\h Y,A_i J_\h Z], W \rangle  \\
& + \langle -A_i [J_\h Z,J_\h W],Y \rangle  + \langle [ A_iJ_\h Z,J_\h W], Y \rangle +  \langle [J_\h Z, A_i J_\h W], Y \rangle \\
&  + \langle-A_i [J_\h W,J_\h Y], Z \rangle + \langle [ A_i J_\h W,J_\h Y], Z \rangle+  \langle [J_\h W,A_i J_\h Y], Z \rangle,\\
\end{split}
\]
which vanishes since $A_i$ is a derivation of $\h$. 
\end{proof}
\begin{example} \label{ex:2} Six dimensional (non abelian) nilpotent Lie algebras admitting a SKT structure are classified  in \cite{FPS04} (see also  \cite[Theorem 3.3]{UG}). Let $\h$ be a six-dimensional nilpotent Lie algebra defined by the structure equations
\begin{equation} \label{eqn:eqstr}
de^1=de^2=de^3=de^4=0, \ de^5=\rho e^{13}-\rho e^{24}+2\delta e^{34}, \ de^6=\rho e^{23}+\rho e^{14}-2e^{12}-2\gamma e^{34},
\end{equation} 
with $\rho\in \{0,1\}, \ \delta,\gamma \in \R$ and $\rho^2-2\gamma=0$. A SKT structure is given by 
\begin{equation} \label{eqn:sktstructure}
 J_\h e_1=e_2, \ J_\h e_3=e_4, \ J_\h e_5=e_6, \ \langle \cdot,\cdot \rangle_\h=\sum_{i=1}^6 (e^i)^2.
\end{equation}
Four different non-isomorphic Lie algebras are distinguished
\begin{equation} \label{eqn:ugarteclassification}
\begin{split}
&\rho=0,\  \gamma=0 \implies \ \begin{cases}  \h \cong \h_2=(0,0,0,0,12,34) \ \ \text{for} \ \delta \neq 0, \\
\h \cong \h_8=(0,0,0,0,0,12)  \ \ \text{for}\ \delta=0,
\end{cases} \\
&\rho=1, \gamma=\frac{1}{2} \implies 
\begin{cases}  \h \cong \h_2=(0,0,0,0,12,34) \ \  \text{for} \ 4\delta^2 > 3, \\
\h \cong \h_4=(0,0,0,0,12,14+23) \ \  \text{for} \ 4\delta^2 = 3, \\
\h \cong \h_5=(0,0,0,0,13+42,14+23) \ \  \text{for} \ 4\delta^2 < 3.
\end{cases} 
\end{split}
\end{equation}
By \cite[Theorem 3.2]{FPS04}  these are the only possible six dimensional nilpotent Lie algebras admitting a SKT structure, up to isomorphism. \\
We want to exploit Theorem \ref{thm:newexamples} to extend the Lie algebras described above to $8$-dimensional solvable SKT Lie algebras. 
Let $\h$ be a six dimensional nilpotent Lie algebra with structure equations  \eqref{eqn:eqstr}  endowed with the SKT structure \eqref{eqn:sktstructure}. Consider the abelian Lie algebra $\mathbb R^2$ endowed with the standard Hermitian structure and let $\{U, U' \}$ be a unitary basis of $\R^2$ satisfying $U'=JU$  with dual basis $\{ u,u' \}$. We define the Lie algebra homomorphism $\theta: \R^2 \to Der(\h)$
as  
\[ 
\theta(U)=\begin{pmatrix}
\begin{array}{cc|cc|cc}
0 & a & 0 & 0 & 0 & 0 \\
-a& 0 & 0 & 0& 0& 0\\ \hline
0& 0& 0 & -a & 0 & 0 \\
0 & 0 & a & 0 & 0 & 0 \\ \hline
0 & 0 & 0 &0  & 0 & 0 \\
0 &0  &0 & 0 & 0 & 0
\end{array}
\end{pmatrix}
\]
and $\theta(U')=0$, with respect to the fixed basis $\{e_1,\dots,e_6\}$ of $\h$. One can prove that $\theta(\R^2) \subset Der(\h)$ by direct computation. Since $\theta(U)$ (and trivially $\theta(U')$) is in $\mathfrak{sp}(\h) \cap \mathfrak{so}(\h)$ with respect to the diagonal metric $\langle \cdot,\cdot \rangle_\h=\sum_{i=1}^6 (e^i)^2$, Theorem \ref{thm:newexamples} applies. Hence, the Lie algebra $(\g=\h \rtimes_\theta \R^2, J, \langle \cdot,\cdot \rangle)$ with $\langle \cdot,\cdot \rangle=\sum_{i=1}^6 (e^i)^2+u^2+u'^2 $ and 
$J=\begin{pmatrix} \begin{array}{ c|cc}
J_\h & & \\ \hline
& 0 & -1 \\
& 1 & 0 \\
\end{array}
\end{pmatrix}$ is SKT.\\ \ \\
In particular, the following families of $8$-dimensional decomposable Lie algebras 
\begin{equation} \label{eqn:newskt} 
\begin{aligned}
 &\s^{a, \delta}_{1}=(a f^{27},-a f^{17},-a f^{37},a f^{47},2\delta f^{34},-2f^{12},0,0),  \\ 
 & \ \quad  \qquad\  \text{with} \ \delta \neq 0, \\ 
 &\s^a_{2}=(a f^{27},-a f^{17},-a f^{37},a f^{47},0,-2f^{12},0,0),  \\ 
&\s^{a, \delta}_{3} =(a f^{27},-a f^{17},-a f^{37},a f^{47},f^{13}-f^{24}+2\delta f^{34},f^{23}+f^{14}-2f^{12}-f^{34},0,0) ,	\\
&\ \quad \qquad \  \text{with} \ 4\delta^2> 3, \\
&\s^a_{4}=(a f^{27},-a f^{17},-a f^{37},a f^{47},f^{13}-f^{24}+\sqrt{3}f^{34},f^{23}+f^{14}-2f^{12}-f^{34},0,0),   \\ 		
&\s^{a,\delta}_{5}=(a f^{27},-a f^{17},-a f^{37},a f^{47},f^{13}-f^{24}+2\delta f^{34},f^{23}+f^{14}-2f^{12}-f^{34},0,0) , \\
& \ \quad  \qquad\  \text{with} \ 4\delta^2< 3
\end{aligned}
\end{equation}
are extensions via the homomorphism $\theta$ of the nilpotent Lie algebras listed in \eqref{eqn:ugarteclassification} and admit a SKT structure. More precisely, $\s^{a, \delta}_{1},\s^{a, \delta}_{3}$ are extensions of $\h_2$ and $\s^a_{2}, \s^a_{4}, \s^{a,\delta}_{5}$ are extensions of $\h_8,\h_4,\h_5$, respectively. Observe that the Lie algebras above are nilpotent for $a=0$ and almost nilpotent otherwise. Examples such that the nilpotent Lie algebra $\h$ coincides with the nilradical of the extension can be constructed in the same way as above by setting $\rho=\gamma=0$ in \eqref{eqn:eqstr} and 
\[ 
\theta(U)=\begin{pmatrix}
\begin{array}  {cc|cc|cc}
0 & a  & 0 & 0 & 0 & 0\\
-a& 0 & 0 & 0 & 0& 0\\ \hline
0 & 0 & 0 & 0 &0 &0\\
0 &0 & 0 & 0 & 0 &0 \\ \hline
0 & 0 &0 & 0 & 0 & 0 \\
0&0 &0 &0 & 0 & 0
\end{array}
\end{pmatrix}, \ \ 
\theta(U')=\begin{pmatrix}
\begin{array}{cc|cc|cc}
0 & 0  &0 & 0& 0&0\\
0& 0 & 0&0 &0 &0 \\ \hline
0&0 & 0 & b & 0 &0 \\
0& 0& -b & 0 & 0 &0\\ \hline
0&0 & 0&0 & 0 & 0 \\
0&0 &0 &0 & 0 & 0
\end{array}
\end{pmatrix}
\]
with $a,b \neq 0$. 
Therefore, the following families of $8$-dimensional indecomposable solvable Lie algebras
\begin{flalign*} 
&\begin{aligned}
 &\s^{a,b,\delta}_{6}=(a f^{27},-a f^{17},b f^{38},-b f^{48},2\delta f^{34},-2f^{12},0,0),  \quad a,b, \delta \neq 0,   \\
 &\s^{a,b}_{7}=(a f^{27},-a f^{17},b f^{38},-b f^{48},0,-2f^{12},0,0), \quad  a,b \neq 0,   \end{aligned} && 
\end{flalign*}
have nilradical $\h_2$ and $\h_8$ respectively, and admit a SKT structure $(J, \langle \cdot,\cdot \rangle)$, given by
\[
 \langle \cdot, \cdot \rangle =\sum_{i=1}^8 (f^i)^2, \ Jf_1=f_2, \ Jf_3=f_4, \ Jf_5=f_6, \ Jf_7=f_8.
\] 
such that $J\h=\h$.
We can prove  now that $(\s^{a,b,\delta}_{6}, J)$ and $(\s^{a,b}_{7}, J)$ do not admit any balanced metric. The proof proceeds in the same way for both Lie algebras.  Assume that there exists a balanced $J$-Hermitian metric $\langle \cdot,\cdot \rangle_2$. Since $(\h, J)$ admits an SKT metric, then its center $\mathfrak{z}(\h)$ is $J$-invariant, and so we have the decomposition of the Lie algebra as $\mathfrak{z}(\h)^{\perp_{\langle, \rangle_2}} \oplus \mathfrak{z}(\h) \oplus \h^{\perp_{\langle, \rangle_2}}$ with respect to $\langle \cdot,\cdot \rangle_2$. We fix an unitary basis $\{e_1,\dots,e_8\}$ for the metric $\langle \cdot,\cdot \rangle_2$, such that $\{e_1,\dots,e_4\}$ is a unitary basis of $\mathfrak{z}(\h)^{\perp_{\langle, \rangle_2}}$ satisfying $Je_{2i-1}=e_{2i}$, $\{e_5,e_6\}$ is a unitary basis of $\mathfrak{z}(\h)$ satisfying $Je_5=e_6$ and  $\{e_7,e_8\}$ is a unitary basis of $\h^{\perp_{\langle, \rangle_2}}$ satisfying $Je_7=e_8$. Since  $(J, \langle \cdot,\cdot \rangle_2)$ is balanced, we have that the Lee form $\theta$, defined as 
\[
\theta(X)=\frac{1}{2} \langle \sum_{i=1}^8 [e_i,Je_i], JX \rangle_2=\frac{1}{2} \langle \sum_{i=1}^4 [e_i,Je_i], JX \rangle_2 + \langle  [e_7,e_8], JX \rangle_2,
\]
is identically zero. Observe that since $\h$ is $2$-step nilpotent, $\sum_{i=1}^4 [e_i,Je_i] \subset \mathfrak{z}(\h)$. Let $X$ be any vector of $\mathfrak{z}(\h)^{\perp_{\langle, \rangle_2}}$. Then $ \theta(X)= \langle  [e_7,e_8], JX \rangle_2=0$, impliying that $ [e_7,e_8] \in \mathfrak{z}(\h)$. In terms of the previous basis $\{f_1,\dots,f_8\}$, we may write $e_7=\lambda f_7+ \mu f_8 + Y$, with $Y \in \h$, and 
$e_8=Je_7=\lambda f_8- \mu f_7 + JY$. Exploiting that $[f_7,f_8]=0$, we get $$[e_7,e_8] =  [f_7, \lambda JY + \mu Y]+[f_8, \mu JY -\lambda Y]+[Y,JY]\in \big(\Im(ad_{f_{7}} \vert_\h) \oplus  \Im(ad_{f_{8}} \vert_\h) \oplus \mathfrak{z}(\h)\big) \cap \mathfrak{z}(\h).$$ Hence, the components of $[e_7,e_8]$ along $\Im(ad_{f_{7}} \vert_\h)$ and $  \Im(ad_{f_{8}} \vert_\h)$ must vanish. Moreover, a straightforward computation shows that this forces $\lambda=\mu=0$, giving a contradiction. 
\end{example}
\begin{example} \label{ex:3}
Now, we give an example of a $8$-dimensional solvable Lie algebra with codimension $2$ nilradical $\h \cong \h_8$, endowed with a SKT structure $(\langle \cdot,\cdot \rangle, J)$ such that $J\h\neq \h$. \\
Consider the $8$-dimensional solvable Lie algebra $\s_{8}$ defined by the structure equations
\[
\begin{split}
&de^1=e^{23}, \ de^2=e^{27}, \ de^3=-e^{37}, de^4=e^{57}+e^{48},\\
&de^5=-e^{47}+e^{58}, \ de^6=-2e^{68}, \ de^7=de^8=0,
\end{split}
\]
with nilradical $\{e_1,\dots,e_{6}\} \cong \h_8$. We define the Hermitian structure $(\langle \cdot,\cdot \rangle, J)$ as
\[
\langle \cdot,\cdot \rangle=\sum_{i=1}^8 (e^i)^2, \ Je_1=e_2, \ Je_3=e_7, \ Je_4=e_5, \ Je_6=e_8.
\]
Then, $J\h \neq \h$ and the associated fundamental form is
\[
\omega=e^1 \wedge e^2 +e^3 \wedge e^7 +e^4 \wedge e^5 +e^6 \wedge e^8.
\]
Since the Bismut torsion $c=Jd\omega=-e^{123}-2e^{456}$ is closed, the Hermitian structure $(\langle \cdot,\cdot \rangle, J)$ is SKT. In Theorem \ref{theorem:sktinvariant}, we have proved that when the nilradical $\h$ is $J$-invariant, the Hermitian structure is Chern Ricci flat. It is no longer true when the nilradical is not $J$-invariant, in fact, if one compute the Chern connection of the Lie algebra above, then $\eta^{Ch}=e^{3}+e^6+e^7 $ and $\rho^{Ch}=- e^{37}-2e^{68} \neq 0$.\\
Let $\{\varphi^{1}:=e^1 + ie^2,\varphi^{2}:=e^3+ie^7,\varphi^3:=e^4+ie^5,\varphi^4:=e^6+ie^8\}$. An easy computation shows that $d\varphi^2=\frac{i}{2} \varphi^{2 \overline{2}}$. We use $\varphi^2$ to prove that $(\mathfrak{s}_8, J)$ does not admit any balanced metric. Assume by contradiction that there exists a balanced metric with associated fundamental form $\sigma$. Then $\varphi^2 \wedge \sigma^3 \neq 0$. The differential $d(\varphi^2 \wedge \sigma^3 )=\frac{i}{2} \varphi^{2 \overline{2}} \wedge \sigma^3$ is non zero, and so we have a (non-zero) $2n-1$ form on a unimodular Lie algebra which is not closed. The contradiction follows.
\end{example}
\begin{example} \label{ex:4}
Consider the solvable Lie algebra $\g_{b,c,c'}^{2n}$, \ $b,c,c' \in \R \setminus\{0\}$, defined for $n \ge 3$ by the structure equations
\[
\begin{cases}
& de^1=-e^1 \wedge e^{2n-1}, \\
& de^2=\frac{1}{2}e^2 \wedge e^{2n-1}+ b e^3 \wedge e^{2n-1}-c e^3 \wedge e^{2n}, \\
& de^3=-be^2 \wedge e^{2n-1}+ \frac{1}{2} e^3 \wedge e^{2n-1}+c e^2 \wedge e^{2n}, \\
& de^{2l}=c' e^{2l+1}\wedge e^{2n}, \ \  de^{2l+1}=-c' e^{2l}\wedge e^{2n} \ \  \text{for $l=2,\dots,n-2$}, \\
& de^{2n-2}=0, \\
& de^{2n-1}=0, \\
& de^{2n}=0.
\end{cases}
\]
Observe that for $n=3$, $\g_{b,c,c'}^{6} \cong \g_{5.35}^{-2,0} \oplus \R$.  If we denote by $\{e_1,\dots,e_{2n}\}$ the dual basis, the codimension $2$-abelian nilradical $\h$ is spanned by $ \{e_1,\dots,e_{2n-2}\}$, and $\g_{b,c,c'}^{2n}\cong (\R^{2n-3}\rtimes \R^2) \times \R$, where $\R^{2n-3}, \R^2$ and $\R$ are spanned by $\{e_{1},\dots,e_{2n-3}\}$, $\{e_{2n-1},e_{2n}\}$ and $e_{2n-2}$, respectively. 
Let us define the bi-Hermitian structure $(I_{\pm}, \langle \cdot,\cdot \rangle)$ as
\[
I_\pm e_1=e_{2n-1}, \  I_\pm e_2= \pm e_{3},\  I_\pm e_{2l}=e_{2l+1}\  \text{for $l=2,\dots,n-2$},\  I_\pm e_{2n-2}=e_{2n}, \  \langle \cdot,\cdot \rangle=\mathlarger{\mathlarger{\sum}}_{i=1}^{2n} (e^i)^2. \ 
\]
It is straightforward to note that $I_\pm \h\neq \h$. The corresponding fundamental forms $\omega_\pm=e^1 \wedge e^{2n-1} \pm e^2 \wedge e^3 + e^{2n-2}\wedge e^{2n}+\mathlarger{\mathlarger{\sum}}_{l=2}^{2n-1} e^{2l}\wedge e^{2l+1}$ satisfy
$
d^c_\pm\omega_\pm= \pm \ e^1 \wedge e^2 \wedge e^3,
$
and by the structure equations $e^{1}\wedge e^{2}\wedge e^{3}$ is a closed $3$-form, i.e., $( I_\pm, \langle \cdot,\cdot \rangle)$ is a generalized K\"ahler structure of split-type, i.e. $I_{\pm}$ commute. 
If  $n \ge 5$,  then $\g^{2n}_{b,c,c'}$ admits also a non-split generalized K\"ahler structure. Let the bi-Hermitian structure $( I_\pm, \langle \cdot,\cdot \rangle)$ be defined as
\[
\begin{split}
&\langle \cdot,\cdot \rangle=\mathlarger{\mathlarger{\sum}}_{i=1}^{2n} (e^i)^2, \\
&I_+ e_1=e_{2n-1},  \ I_+ e_2= + e_{3}, \ I_+ e_{4}=e_{5}, \\
& I_+ e_{6}=-e_{7},  \ I_+ e_{2l}=e_{2l+1}\  \text{for $l=4,\dots,n-2$}, \ I_+ e_{2n-2}=e_{2n}, \\
&I_- e_1=e_{2n-1},  \ I_- e_2= - e_{3}, \ I_- e_{4}=-e_{7}, \ I_- e_{5}=-e_{6}, \\
& I_- e_{2l}=e_{2l+1}\  \text{for $l=4,\dots,n-2$}, \ I_- e_{2n-2}=e_{2n}.\\
\end{split}
\]
Since 
$
I_+ I_- e_4=-I_+ e_7=-e_6, \ \ \text{and} \ \ I_- I_+ e_4=I_- e_5=e_6,
$
the commutator $[I_+,I_-] \neq 0$. \\
Let $\omega_\pm$ be the fundamental forms associated to $( I_\pm, \langle \cdot,\cdot \rangle)$, which are respectively
\[
\omega_+=e^1 \wedge e^{2n-1}+e^2 \wedge e^3+e^4 \wedge e^5 -e^6 \wedge e^7 + e^{2n-2}\wedge e^{2n}+ \mathlarger{\mathlarger{\sum}}_{l=4}^{2n-1} e^{2l}\wedge e^{2l+1},
\]
and $\omega_-=e^1 \wedge e^{2n-1}+e^2 \wedge e^3-e^4 \wedge e^7 +e^5 \wedge e^6 + e^{2n-2}\wedge e^{2n}+ \mathlarger{\mathlarger{\sum}}_{l=4}^{2n-1} e^{2l}\wedge e^{2l+1}.$
Then, as before, $
d^c_\pm \omega_\pm = \pm \ e^1 \wedge e^2 \wedge e^3$ and  $d ( e^1 \wedge e^2 \wedge e^3)=0$, 
implying that $(I_\pm, \langle \cdot,\cdot \rangle)$ is a generalized K\"ahler structure of non-split type.
\end{example}

\section{Results on compact  solvmanifolds} 
Using the Symmetrization process (\cite{BE,FG,UG}),  one can prove that if $M=\Gamma \backslash G$ is a compact  solvmanifold endowed with a pair of invariant complex structures $J_\pm$, then $M$ admits a generalized K\"ahler structure $(J_\pm, g)$, if and only if the Lie algebra $\g$ of $G$ admits a generalized K\"ahler inner product $(J_\pm, \langle \cdot,\cdot \rangle)$. Hence, we may prove the following results
\begin{theorem}
Let $M=\Gamma \backslash G$ be a $2n$-dimensional solvmanifold and let $J_\pm$ be invariant complex structures on $M$.  Assume that the nilradical $\h$ of the Lie algebra $\g$ of $G$ has codimension $2$.
\begin{enumerate}
\item[(i)] If $J_\pm \h=\h$, then $(M,J_\pm)$ admits a generalized K\"ahler metric if and only if  $(M, J_+)$ admits a K\"ahler metric.
\item[(ii)] Assume that $\h$ is abelian and the complex structures $J_\pm$ satisfy $J_+\h\neq\h$ and $J_-\h=\h$. If $(M,J_\pm)$ admits a generalized K\"ahler metric, then $(M, J_-)$ admits also a K\"ahler metric.
\end{enumerate} 
\end{theorem}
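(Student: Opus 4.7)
The plan is to reduce the statement to the corresponding Lie algebra results proved in Sections \ref{section2} and \ref{section3} via the symmetrization correspondence recalled at the beginning of this section. Recall that for a compact solvmanifold $M=\Gamma\backslash G$ equipped with a pair of invariant complex structures $J_\pm$, symmetrization yields the equivalence: $(M,J_\pm)$ admits a generalized K\"ahler metric if and only if $\g=\mathrm{Lie}(G)$ admits a generalized K\"ahler inner product $(J_\pm,\langle\cdot,\cdot\rangle)$. Conversely, any inner product on $\g$ extends by left translation and descent to an invariant Riemannian metric on $M$, and the K\"ahler (resp.\ generalized K\"ahler) condition is preserved in both directions.

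For part \emph{(i)}, suppose $(M,J_\pm)$ admits a generalized K\"ahler metric with $J_\pm\h=\h$. Symmetrization produces a generalized K\"ahler inner product $(J_\pm,\langle\cdot,\cdot\rangle)$ on $\g$ with $J_\pm\h=\h$. This is precisely the hypothesis of Theorem \ref{thm:equivalent}, which then yields a K\"ahler inner product $(J_+,\langle\cdot,\cdot\rangle')$ on $\g$ with $J_+\h=\h$. The corresponding left-invariant K\"ahler metric on $G$ descends to an invariant K\"ahler metric on $(M,J_+)$. The converse is immediate: a K\"ahler structure $(M,J_+,g)$ trivially gives a generalized K\"ahler structure by setting $J_-=-J_+$.

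For part \emph{(ii)}, suppose $\h$ is abelian, $J_+\h\neq\h$, $J_-\h=\h$, and $(M,J_\pm)$ admits a generalized K\"ahler metric. Symmetrization provides a generalized K\"ahler inner product $(J_\pm,\langle\cdot,\cdot\rangle)$ on $\g$. In particular, $(J_-,\langle\cdot,\cdot\rangle)$ is SKT with $J_-\h=\h$, and $J_+$ is a complex structure compatible with the same $\langle\cdot,\cdot\rangle$ satisfying $J_+\h\neq\h$. Proposition \ref{proposition:gk} applies (with the roles $J\leftrightarrow J_-$, $I\leftrightarrow J_+$) and forces $(J_-,\langle\cdot,\cdot\rangle)$ to be K\"ahler. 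Extending to a left-invariant metric on $G$ and passing to the quotient yields an invariant K\"ahler metric on $(M,J_-)$.

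The argument is essentially a packaging of previously established Lie-algebra level results, so no substantial new obstacle arises. The only delicate points are \emph{(a)} confirming that the symmetrization procedure as stated applies to both directions of the correspondence between invariant generalized K\"ahler structures on $M$ and generalized K\"ahler inner products on $\g$, and \emph{(b)} checking that the K\"ahler inner product produced by Theorem \ref{thm:equivalent} in part \emph{(i)} is indeed compatible with the original $J_+$ (and not merely with some other complex structure), which is built into that theorem's conclusion.
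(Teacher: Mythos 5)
Your proposal is correct and follows essentially the same route as the paper: symmetrization reduces both parts to the Lie-algebra level, after which part \emph{(i)} is Theorem \ref{thm:equivalent} and part \emph{(ii)} is Proposition \ref{proposition:gk}. The two delicate points you flag are indeed the only ones, and both are handled exactly as you describe.
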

\begin{proof}
To prove \emph{(i)}, we observe that the implication from right to left is obvious. Let us prove the  other implication. By the Symmetrization process, $M=\Gamma \backslash G$ admits a generalized K\"ahler structure  $(J_{\pm}, g)$ if and only if $\g$ admits a generalized K\"ahler structure $(J_{\pm}, \langle \cdot,\cdot \rangle)$. Then the result follows by applying Theorem \ref{thm:equivalent}. \\
To prove \emph{(ii)}, again by the Symmetrization process, there must exists a left invariant generalized K\"ahler structure $(\langle \cdot,\cdot \rangle, J_\pm)$ on the Lie algebra $\g$ with $J_\pm$ satisfying $J_+\h\neq\h$ and $J_-\h=\h$. Then the statement follows by applying Proposition \ref{proposition:gk}.
\end{proof}
\begin{corollary}
Let $M=\Gamma \backslash G$ be a $2n$-dimensional solvmanifold and let $I_\pm$ be left invariant complex structures on $M$.  Assume that the nilradical $\h$ of the Lie algebra $\g$ of $G$ is abelian and of codimension $2$. If $(M,I_\pm)$ admits a generalized K\"ahler metric, then $I_\pm \h \neq \h$.
\end{corollary}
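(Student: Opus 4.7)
The plan is to prove the corollary by first reducing to the Lie algebra level via the Symmetrization process, and then arguing by contradiction using the two parts of the preceding theorem. Since the complex structures $I_\pm$ are left-invariant, the Symmetrization process (see \cite{BE,FG,UG}) gives that $M = \Gamma\backslash G$ admits a generalized K\"ahler metric compatible with $I_\pm$ if and only if the Lie algebra $\g$ of $G$ admits a generalized K\"ahler inner product $(I_\pm, \langle\cdot,\cdot\rangle)$. Hence it suffices to show at the Lie algebra level that both $I_+\h\neq\h$ and $I_-\h\neq\h$; since $I_+$ and $I_-$ play symmetric roles, it is enough to rule out $I_+\h=\h$.

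I would then argue by contradiction, assuming $I_+\h=\h$, and split into two subcases according to whether $I_-$ preserves $\h$ or not. In the first subcase $I_-\h=\h$, part $(i)$ of the preceding theorem applies directly and, via Theorem \ref{thm:equivalent}, yields that $(I_+,\langle\cdot,\cdot\rangle)$ is in fact K\"ahler; consequently the generalized K\"ahler structure is trivial, i.e.\ of the form $(I_+,-I_+)$ with $I_+$ a K\"ahler complex structure. In the second subcase $I_-\h\neq\h$, part $(ii)$ of the preceding theorem applies (with the roles of the plus/minus complex structures exchanged relative to the notation there), and through Proposition \ref{proposition:gk} we again deduce that $(I_+, \langle\cdot,\cdot\rangle)$ must be K\"ahler and the generalized K\"ahler structure trivial. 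Either outcome contradicts the hypothesis that $(I_\pm,g)$ is a genuine generalized K\"ahler structure, and we conclude $I_+\h\neq\h$.

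The main obstacle lies in the second subcase, where the conclusion rests on Proposition \ref{proposition:gk}. That proposition is proved by a delicate analysis of the Nijenhuis tensor of $I_-$ on the four-dimensional $I_-$-invariant complement of $\h_{I_-}=\h\cap I_-\h$, combined with the skew-symmetry of $\mathrm{ad}_U|_\h$ and $\mathrm{ad}_{I_+U}|_\h$ forced by the SKT condition on $(I_+,\langle\cdot,\cdot\rangle)$ (Corollary \ref{thm:abelian}). Once Proposition \ref{proposition:gk} and the preceding theorem are granted, the corollary itself reduces to the short case distinction above, so the bulk of the work is already packaged in the two statements being invoked.
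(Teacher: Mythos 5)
Your reduction and case split are exactly what the paper intends: the corollary carries no separate proof in the text and is meant to follow from the preceding theorem by symmetrization plus the dichotomy $I_-\h=\h$ versus $I_-\h\neq\h$, invoking part \emph{(i)} (i.e.\ Theorem \ref{thm:equivalent}) in the first subcase and part \emph{(ii)} (i.e.\ Proposition \ref{proposition:gk}) in the second. Up to that point your argument agrees with the intended one.

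The problem is the final step. In each subcase the cited results conclude only that the $\h$-preserving complex structure, say $I_+$, is K\"ahler with respect to $\langle\cdot,\cdot\rangle$ (or that $(M,I_+)$ admits some K\"ahler metric). That is not, by itself, incompatible with the hypothesis that $(M,I_\pm)$ admits a generalized K\"ahler metric: a K\"ahler structure is in particular generalized K\"ahler. You close the argument by declaring the structure ``trivial, i.e.\ of the form $(I_+,-I_+)$'' and contradicting ``genuineness,'' but (a) no non-triviality hypothesis appears in the statement of the corollary, and (b) K\"ahlerness of $(I_+,\langle\cdot,\cdot\rangle)$ does not force $I_-=-I_+$; Theorem \ref{thm:equivalent} and Proposition \ref{proposition:gk} say nothing about $I_-$ beyond compatibility with the same metric. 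Read literally, the statement even fails on $\tau_{3,0}\times\tau_{3,0}$: the structure $Jf_1=f_2$, $Jf_3=f_4$, $Jf_5=f_6$ with the standard inner product is K\"ahler, satisfies $J\h=\h$, and $(\pm J,\langle\cdot,\cdot\rangle)$ descends to a generalized K\"ahler structure on the associated compact solvmanifolds. So the corollary must be understood with an implicit non-K\"ahler (or non-trivial) assumption on the generalized K\"ahler structure; your proof should state that hypothesis explicitly and derive the contradiction directly from the conclusion ``$(M,I_+)$ admits a K\"ahler metric,'' rather than from the unjustified claim that $I_-=-I_+$.
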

Now, we exhibit new examples of SKT and generalized K\"ahler  compact solvmanifolds. 
\begin{example}
Let $\g^b_8$ be the $8$-dimensional solvable Lie algebra constructed in Example \ref{ex:1}. We prove that for $b=2\pi$, the simply conncted Lie group with Lie algebra $\g^{2\pi}_{8}$ admits a compact quotient. Let $G^{2\pi}_{8}$ be the corresponding connected and simply connected Lie Group with group operation $\ast$ given by 
\[
\tiny
\begin{pmatrix}
a_1 \\
a_2 \\
a_3 \\
a_4 \\
a_5 \\
a_6\\
a_7\\
a_8
\end{pmatrix}
\ast
\begin{pmatrix}
x_1 \\
x_2 \\
x_3 \\
x_4 \\
x_5 \\
x_6\\
x_7\\
x_8
\end{pmatrix}
= 
\begin{pmatrix}
a_1 \\
a_2 \\
a_3 \\
a_4 \\
a_5 \\
a_6\\
a_7\\
a_8
\end{pmatrix}+
\begin{pmatrix}
\cos (2\pi a_7) & -\sin(2\pi a_7) & 0 & 0 & 0 & 0 &0 & 0\\
\sin(2\pi a_7) & \cos(2\pi a_7) & 0 & 0 & 0 & 0 & 0 & 0\\
0 & 0 & \cos (2\pi a_8) & -\sin(2\pi a_8) &  0 & 0 & 0 & 0\\
0 & 0 & \sin(2\pi a_8) & \cos(2\pi a_8) &  0 & 0 & 0 & 0\\
0 &0 & 0& 0& 1 & 0 & 0 & -a_7 \\
0&0 & 0& 0& 0 & 1 & 0 & -a_7 \\
0&0 &0 &0 & 0 & 0 & 1 & 0 \\
0& 0&0 &0 & 0 & 0 & 0 & 1 \\
\end{pmatrix}
\cdot 
\begin{pmatrix}
x_1 \\
x_2 \\
x_3 \\
x_4 \\
x_5 \\
x_6\\
x_7\\
x_8
\end{pmatrix}.
\] 
If we consider $\Gamma=\{(a_1,\dots,a_8) \in G^{2\pi}_{8}  \ | \ a_i \in  \Z\} \cong \Z^8$, then it is straightforward to note that $\Gamma$ is a discrete subgroup of $G^{2\pi}_{8}$ of maximal rank. The corresponding solvmanifold $(\Gamma \backslash G^{2\pi}_{8},J)$ admits a SKT (non-flat) Chern Ricci flat metric.  Note that $(\Gamma \backslash G^{2\pi}_{8},J)$ does not admit any balanced metric. Indeed, if $(\Gamma \backslash G^{2\pi}_{8},J)$ admits a balanced metric, then by the symmetrization process, $(\g^{2\pi}_8, J)$ admits a balanced metric, moreover since the Lie algebra has an abelian ideal of codimension $2$, $\g^{2\pi}_8$ would also admit a K\"ahler metric (\cite{CZ}), giving a contradiction.
\end{example} 
\begin{example}
Let $\s$ be any of the Lie algebras listed in \eqref{eqn:newskt} (Example \ref{ex:2}). For $a=0, 2\pi$ the corresponding connected and simply connected Lie groups admit compact quotients.	\\
For $a=0$ the statement is trivial. Indeed, in this case $\s$ is isomorphic to $\h \oplus \R^2$. Moreover, since $\h$ is isomorphic to either  $\h_2,\h_4,\h_5,\h_8$ depending on the values of $\rho,\gamma,\delta$, we always have that $\h$ admits a basis with rational structure constants. \\
For $a=2\pi$, since $\s \cong (\h \rtimes_A \R \langle X \rangle) \oplus \R$, we may restrict to prove that $\g=\h \rtimes_A \R \langle X \rangle$ admits compact quotients for $a=2\pi$. In particular, $\h$ is the nilradical of  $\h \rtimes_A \R \langle X \rangle$.  Let $G=H \rtimes_\mu \R$ be the corresponding connected and simply connected almost nilpotent Lie group. \\
By \cite{BC}, if there exists $0 \neq t_1 \in \R$ and a rational basis $\{Y_1,\dots,Y_6\}$ of $\h$ such that the coordinate matrix of $d_e \mu(t_1)=\text{exp}(t_1 A)$ in such a basis is integer, then $G$ admits a lattice $\Gamma$ of splitting type, i.e., $\Gamma=\Gamma_H \rtimes \Gamma_\R$. 
Let $t_1=1$, then 
$
d_e \mu(1)=\text{exp}(A)= Id_6.
$
Hence, $d_e \mu(1)$ is the identity of $\h$ and its coordinate matrix is integer for any chosen basis of $\h$. In particular, this is true for any rational basis $\{Y_1,\dots,Y_6\}$ of $\h$, which exists by previous observations. 
Therefore, the connected and simply connected solvable Lie group $G \times \R \cong H \rtimes \C$ corresponding to $\s $ admits a lattice of splitting type $\Gamma' = (\Gamma_H \rtimes \Gamma_\R ) \times \Z= \Gamma_H \rtimes \Gamma_\C$, where $\Gamma_\C= \Gamma_\R \times \Z$. \\
We also claim that the solvmanifold  $\Gamma' \backslash H \rtimes \C$ does not admit any balanced metric. Indeed, one can easily prove that the complex structure $J$ on $\Gamma' \backslash H \rtimes \C$ is of splitting type (see \cite{AOUV} for further details). The only non trivial check is that the Dolbeault cohomology of $(\Gamma_H \backslash H, J_\h)$  is the left invariant one. However, this follows by \cite[Corollary 3.10] {SR}. \\
Now, assume by contradiction that $(\Gamma' \backslash H \rtimes \C, J)$ admits a balanced metric. Then by \cite[Proposition 2.1]{AOUV}, $(\h, J_\h)$ admits a balanced metric, and so $(\h, J_\h)$ admits both a SKT and a balanced metric. By \cite{FV}, $\h$ would be abelian, giving a contradiction. 
\end{example}
For the Lie algebras constructed in Example \ref{ex:4}, we have the following 
\begin{theorem} \label{thm:lattice} 
Let  $G^{2n}_{b,c,c'}$ be the connected and simply connected Lie group corresponding to the Lie algebra $\g^{2n}_{b,c,c'}$ constructed in Example \ref{ex:4}. $G^{2n}_{b,c,c'}$ admits a lattice $\Gamma$ for some values of $b,c,c' \in \R \setminus \{0\}$ and, denoted by $M$ the corresponding compact solvmanifold, $M$ admits generalized K\"ahler structure of split type for $n \ge 3$ and of non-split type for $n \ge 5$. Furthermore, we have that $b_1(M)=3$. In particular, $M$ does not admit any K\"ahler metric.
\end{theorem}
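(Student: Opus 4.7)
The plan is to construct a lattice $\Gamma$ of splitting type in $G^{2n}_{b,c,c'}$ for a careful choice of the parameters, to descend the generalized K\"ahler structures of Example \ref{ex:4} to the compact quotient $M=\Gamma\backslash G^{2n}_{b,c,c'}$, and to read off $b_1(M)$ from the abelianization of $\Gamma$. Since $\g^{2n}_{b,c,c'}=\h\rtimes \R\langle e_{2n-1},e_{2n}\rangle$ with abelian nilradical $\h$ and commuting generators $e_{2n-1},e_{2n}$, a lattice of splitting type will have the form $\Gamma=\Gamma_\h\rtimes (t_1\Z\oplus t_2\Z)$, provided the monodromies $M_A:=\exp(t_1\ad_{e_{2n-1}}|_\h)$ and $M_B:=\exp(t_2\ad_{e_{2n}}|_\h)$ both preserve a common lattice $\Gamma_\h\subset\h$. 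The key structural feature is that $A:=\ad_{e_{2n-1}}|_\h$ acts nontrivially only on the $3$-dimensional block $\operatorname{span}(e_1,e_2,e_3)$ with eigenvalues $-1$ and $\tfrac12\pm ib$, while $B:=\ad_{e_{2n}}|_\h$ is block-diagonal with pure rotations of angles $c$ on $(e_2,e_3)$ and $c'$ on each $(e_{2l},e_{2l+1})$, vanishing on $e_1$ and on $e_{2n-2}$.

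For the $3$-dimensional block, I would pick $(t_1,b)$ so that the characteristic polynomial
\[
p_{t_1,b}(x)=x^3-\bigl(e^{-t_1}+2e^{t_1/2}\cos(bt_1)\bigr)x^2+\bigl(2e^{-t_1/2}\cos(bt_1)+e^{t_1}\bigr)x-1
\]
belongs to $\Z[x]$; equivalently, $e^{-t_1}$ must be the real root of a cubic $x^3-Sx^2+Tx-1\in\Z[x]$ of negative discriminant, so that the remaining pair of complex conjugate roots matches $e^{t_1/2}e^{\pm ibt_1}$. A concrete choice is $x^3+x-1$: irreducible over $\Q$, discriminant $-31$, with a unique real root $\alpha\in(0,1)$. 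Setting $t_1:=-\log\alpha$ and letting $bt_1$ equal the argument of either complex root, $\exp(t_1 A)|_{\operatorname{span}(e_1,e_2,e_3)}$ is conjugate over $\R$ to the companion matrix of $x^3+x-1$ and thus preserves a lattice $\Gamma_1\cong\Z^3$ in $\operatorname{span}(e_1,e_2,e_3)$, while on the complement $\operatorname{span}(e_4,\dots,e_{2n-2})$ the map $A$ is zero. Next I would choose $t_2:=1$, $c:=2\pi$ and $c':=\pi$ (the latter being irrelevant for $n=3$): $\exp(t_2 B)$ is then the identity on $\operatorname{span}(e_1,e_2,e_3)\oplus\Z e_{2n-2}$ and equals $-\mathrm{Id}$ on each $(e_{2l},e_{2l+1})$, so it preserves
\[
\Gamma_\h:=\Gamma_1\oplus\bigoplus_{l=2}^{n-2}(\Z e_{2l}\oplus\Z e_{2l+1})\oplus\Z e_{2n-2},
\]
which is also preserved by $\exp(t_1 A)$. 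The resulting group $\Gamma:=\Gamma_\h\rtimes(t_1\Z\oplus\Z)$ is the desired lattice. The delicate point is the simultaneous preservation of a common lattice on the $3$-dimensional block by $M_A$ and $M_B$, and this is the main obstacle of the proof; it is circumvented here by forcing $M_B$ to act as the identity on that block (via $c=2\pi$), since the only matrices commuting with the ``number-field'' action of $M_A$ on $\Gamma_1$ that come from $\mathcal{O}_{\Q(\alpha)}$ are the rational ones.

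The bi-Hermitian structures of Example \ref{ex:4} are built at the Lie algebra level, so they descend to invariant generalized K\"ahler structures on $M$, of split type for $n\ge 3$ and of non-split type for $n\ge 5$. Since $M$ is aspherical, $b_1(M)$ equals the free rank of $\Gamma^{ab}$; for a split extension with abelian kernel and abelian base one has
\[
\Gamma^{ab}\cong \Gamma_\h\big/\bigl((M_A-I)\Gamma_\h+(M_B-I)\Gamma_\h\bigr)\,\oplus\,(t_1\Z\oplus\Z).
\]
The $\R$-span of the denominator is $\operatorname{span}(e_1,e_2,e_3)+\bigoplus_{l=2}^{n-2}\operatorname{span}(e_{2l},e_{2l+1})=\operatorname{span}(e_1,\dots,e_{2n-3})$, of dimension $2n-3$, so only $e_{2n-2}$ survives in the quotient, yielding $b_1(M)=1+2=3$. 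Since $b_1(M)$ is odd and every compact K\"ahler manifold has even first Betti number, $M$ admits no K\"ahler metric.
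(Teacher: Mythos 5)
Your proposal is correct and follows essentially the same route as the paper: a splitting-type lattice $\Gamma_\h\rtimes\Z^2$ obtained by tuning $(t_1,b)$ so that the hyperbolic $3\times 3$ block of the monodromy becomes conjugate to a unimodular integer matrix, tuning $c,c'$ so that the second monodromy preserves the same lattice in $\h$, descending the invariant bi-Hermitian structures of Example \ref{ex:4}, and reading off $b_1(M)=3$ (hence non-K\"ahlerness) from the abelianization of $\Gamma$. The only real difference is that where the paper invokes \cite{AO} for the existence of suitable $(t_0,\overline{b})$, you exhibit them explicitly via the cubic $x^3+x-1$, which is a nice self-contained touch; the closing aside about $\mathcal{O}_{\Q(\alpha)}$ is unnecessary, since forcing $M_B$ to be the identity on that block already settles the compatibility.
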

\begin{proof}
The  Lie group $G^{2n}_{b,c,c'}$ underlying the Lie algebra $\g^{2n}_{b,c,c'}$ is the semi-direct product $\R^{2n-2}\rtimes_\phi \R^2$. In the following we will denote by $\alpha=\,^t(\alpha_1,\dots,a_{2n-2})$ and $(a_{2n-1},a_{2n})$ the coordinates on $\R^{2n-2}$ and $\R^2$, respectively. The multiplication $\ast$ is defined as
\[  \tiny
\begin{pmatrix}
\alpha\\
a_{2n-1}\\
a_{2n}\\
\end{pmatrix} \ast 
\begin{pmatrix}
\underline{x}\\
x_{2n-1}\\
x_{2n}\\
\end{pmatrix} =
\begin{pmatrix}
\alpha+ \phi(a_{2n-1},a_{2n})\cdot \underline{x}\\
a_{2n-1}+x_{2n-1}\\
a_{2n}+x_{2n}\\
\end{pmatrix} 
\]
where $\phi$ is the diagonal block matrix
\[ \tiny
\phi(a_{2n-1},a_{2n}) \cdot \underline{x} =
\begin{pmatrix}
e^{a_{2n-1}}\cdot x_1 \\
e^{-{\frac{a_{2n-1}}{2}}} R(ca_{2n}-ba_{2n-1}) \cdot \begin{pmatrix} 
x_2 \\
x_3 \\
\end{pmatrix} \\
\vdots \\
R(c'a_{2n})\cdot 
\begin{pmatrix}
x_{2l} \\
x_{2l+1} \\
\end{pmatrix} \\
\vdots \\
x_{2n-2} 
\end{pmatrix}
\]
with 
\[ \tiny
R (\theta)=\begin{pmatrix}
\cos(\theta) & \sin(\theta) \\
-\sin(\theta) & \cos(\theta) \\
\end{pmatrix}.
\]
Consider the $3\cross3$ square matrix 
\[ \tiny
\begin{split}
A(a_{2n-1},b)&=
\begin{pmatrix}
e^{a_{2n-1}} & 0 \\
0 & e^{-{\frac{a_{2n-1}}{2}}} \cdot R(-ba_{2n-1}) 
\end{pmatrix} =\exp \left( a_{2n-1} \cdot \begin{pmatrix} 
1 & 0 & 0 \\
0 &-\frac{1}{2} & -b \\
0 & b & -\frac{1}{2} \\
\end{pmatrix}
\right).
\end{split}
\]
By \cite{AO} there exist $t_0, \overline{b} \in \R \setminus \{0\}$ such that  $A:=A(t_0,\overline{b})$ is similar to an integer matrix $\Lambda \in SL(3,\Z)$ via an invertible matrix $P=(p_{ij})$, i.e., there exists an invertible matrix $P$ such that $AP=P\Lambda$. It is then easy to observe that for each $n \in \Z$, $A^n=P\Lambda^n P^{-1}$, where by a straightforward computation $A^n=A(t_0n,\overline{b}).$ \\
If we set $b=\overline{b}$, $c=\frac{2\pi}{t_{0}}$ and $c'=\frac{\pi}{2t_0}$, we claim that the discrete set $\Gamma:= P \Z^3 \times (\Z^2)^{n-3} \times \Z \times t_0 \Z^2 $ is a lattice of $G^{2n}_{\overline{b},\frac{2\pi}{t_{0}},\frac{\pi}{2t_{0}}}$. 
We first observe that for such values of $b,c,c'$ 
\[  \tiny
\phi(t_0 n,t_0m) \cdot \underline{x} =
\begin{pmatrix}
A^n \cdot \begin{pmatrix} 
x_1 \\
x_2 \\
x_3 \\
\end{pmatrix} \\
\vdots \\
R\left(\frac{\pi}{2}\cdot m \right)\cdot 
\begin{pmatrix}
x_{2l} \\
x_{2l+1} \\
\end{pmatrix} \\
\vdots \\
x_{2n-2} 
\end{pmatrix}
\]
Then, fixed $\,^t(Pz,w_1,\dots,w_{n-3},q,t_0n,t_0m)$ and $\,^t(Pz',w'_1,\dots,w'_{n-3},q',t_0n',t_0m') \in \Gamma$ \[ \tiny
\begin{pmatrix}
Pz \\
w_1 \\
\vdots \\
w_{n-3} \\
q \\
t_0n \\
t_0m
\end{pmatrix}
\ast
\begin{pmatrix}
Pz' \\
w'_1 \\
\vdots \\
w'_{n-3} \\
q \\
t_0n' \\
t_0m'
\end{pmatrix} ^{-1}= \begin{pmatrix}
P(z -\Lambda^{(n-n')}\cdot z' )\\
w_1- R(\frac{\pi}{2}\cdot (m-m')) w_1' \\
\vdots \\
w_{n-3}-R(\frac{\pi}{2}\cdot (m-m')) w_{n-3}'  \\
q-q' \\
t_0(n-n') \\
t_0(m-m') \\
\end{pmatrix}
\]

which is in $\Gamma$ as $\Lambda^{r}$  and $ R\left (\frac{\pi}{2} \cdot r \right)$ are integer matrix for each $r \in \Z$.\\
The action induced by  $\Gamma$ on $G^{2n}_{\overline{b},\frac{2\pi}{t_0},\frac{\pi}{2t_0}}$ is generated by 
\[
\begin{split}
&\gamma_1(x_1,x_2,\dots,x_{2n-2},x_{2n-1},x_{2n})=(x_{1}+p_{11}, x_{2}+p_{21},x_{3}+p_{31}, x_4,\dots,x_{2n-1},x_{2n}), \\
&\gamma_2(x_1,x_2,\dots,x_{2n-2},x_{2n-1},x_{2n})=(x_{1}+p_{12}, x_{2}+p_{22},x_{3}+p_{32}, x_4,\dots,x_{2n-1},x_{2n}), \\
&\gamma_3(x_1,x_2,\dots,x_{2n-2},x_{2n-1},x_{2n})=(x_{1}+p_{13}, x_{2}+p_{23},x_{3}+p_{33}, x_4,\dots,x_{2n-1},x_{2n}), \\
&\gamma_{l} (x_1,x_2,\dots,x_{2n-2},x_{2n-1},x_{2n})=(x_1,x_2,x_3,\dots,x_l+1,\dots,x_{2n-1},x_{2n}), \ \text{$l=4,\dots,2n-2$} \\
&\gamma_{2n-1}(x_1,x_2,\dots,x_{2n-2},x_{2n-1},x_{2n})=(A \cdot (x_1,x_2,x_3),x_4,\dots,x_{2n-2},x_{2n-1}+t_0,x_{2n}) \\
&\gamma_{2n}(x_1,x_2,\dots,x_{2n-2},x_{2n-1},x_{2n})=(x_1,x_2,x_3,\dots,\underbrace{x_{2j+1},-x_{2j}}_{\begin{subarray}{l}\text{entries\ $2j$ \ and \ $2j+1$}\\ \text{for each $j=2,\dots,n-2$}\end{subarray}},\dots,x_{2n-2},x_{2n-1},x_{2n}+t_0).
\end{split}
\]
It is immediate to see that the action is free and properly discontinuous, which implies that $\Gamma$ is a lattice of $G^{2n}_{\overline{b},\frac{2\pi}{t_{0}},\frac{\pi}{2t_{0}}}$. 
We compute now the commutators $[\gamma_r,\gamma_s]$ of $\Gamma$, for each pair $r,s \in \{1,\dots,2n\}$. 
The only non-trivial commutators are 
\[
\begin{split}
&[\gamma_{2n-1},\gamma_i]=\gamma_1^{\Lambda_{1i}}\cdot \gamma_2^{\Lambda_{2i}}\cdot \gamma_3^{\Lambda_{3i}}\cdot  \gamma_i^{-1}, \  i=1,\dots,3, \quad 
[\gamma_{2n},\gamma_{2l}]=\gamma_{2l}^{-1} \cdot \gamma_{2l+1}^{-1}, \  l=2,\dots,n-2,  \\
&[\gamma_{2n},\gamma_{2l+1}]=\gamma_{2l} \cdot \gamma_{2l+1}^{-1}, \  l=2,\dots,n-2, \\
\end{split}
\]
where $(\Lambda_{ij})$ is the integer matrix described previously. Then, since $\Gamma$ is $2$-step solvable, it follows that $[\Gamma, \Gamma]$ is a torsion-free abelian subgroup of $\Gamma$ of rank $2n-3$. Therefore, by 
\[
\rank(\Gamma)=\rank(\Gamma/[\Gamma, \Gamma])+\rank([\Gamma, \Gamma]),
\]
$\Gamma/[\Gamma, \Gamma]=\Z^3$ and, accordingly, $b_1(M)=3$.
\end{proof}
\begin{remark}
Observe that since the group $\Gamma \backslash G^{2n}_{\overline{b},\frac{2\pi}{t_{0}},\frac{\pi}{2t_{0}}}$ is not completely solvable, we cannot apply Hattori's Theorem \cite{HA} to compute the De Rham cohomology of $M$. 
\end{remark}


\begin{thebibliography}{999}
\bibitem{ADE} Alekseevsky, D.V.  and David,  L.,  A note about invariant SKT structures and generalized K\"ahler structures on flag manifolds,   Proc. Edinb. Math. Soc.  (2012), 543-549.
\bibitem{AO} Andrada, A. and Origlia, M., Lattices in almost abelian Lie groups with locally conformal K\"ahler or symplectic structures.  Manuscripta Math. 155 (2018), no.3-4,  389--417.
\bibitem{AOUV} Angella, D., Otal, A., Ugarte, L. and Villacampa, R., Complex structures of splitting type. Rev. Mat. Iberoam.  33(4), (2017), 1309-1350.
\bibitem{AGG}  Apostolov, V.,  Gauduchon, P.  and  Grantcharov, G.,  Bihermitian structures on complex surfaces,  Proc. London Math. Soc.(3) 79  (1999), 414--428. Corrigendum  92. (2006), no. 1, 200--202.
\bibitem{AG} Apostolov, V.  and Gualtieri, M., Generalized K\"ahler manifolds, commuting complex structures, and split tangent bundles,  Comm. Math. Phys. 27 (2007), no. 2, 561--575.

\bibitem{AL} Arroyo, R.M. and Lafuente, R.A.,  The long‐time behavior of the homogeneous pluriclosed flow,  Proc. Lond. Math. Soc. (3) 119(1) (2019), 266--289.
\bibitem{ArNic} Arroyo, R.M. and Nicolini, M.,  SKT structures on nilmanifolds,  Math. Z.  302(2) (2022), 1307--1320.

\bibitem{BE} Belgun, F.A.,  On the metric structure of non-K\"ahler complex surfaces, Math. Ann. 317 (2000), no.1, 1–40.
\bibitem{Bi}  Bismut,J. M. ,   A local index theorem for non Kähler manifolds,  Math.  Ann.  284(4) (1989), 681--699.
\bibitem{BC} Bock, C., On low-dimensional solvmanifolds, Asian J. Math. 20 (2016), no.2,  199--262.
\bibitem{BF}Brienza, B. and Fino, A. Generalized K\" ahler manifolds via mapping tori,  preprint arXiv:2305.11075, to appear in J. Symplectic Geom.
\bibitem{BM}  Boucetta,  M., and  Mansouri, W., Left invariant generalized complex and K\"ahler structures on simply connected four dimensional Lie groups: classification and invariant cohomologies,   J. Algebra 576 (2021), 27--94. 
\bibitem{CZ} Cao, K. and Zheng, F.,  Fino–Vezzoni conjecture on Lie algebras with abelian ideals of codimension two,  Math.  Z. 307  (2024), no. 2, Paper No. 31.
\bibitem{CA} Cavalcanti, G.R., Formality in generalized Kähler geometry, Topology Appl. 154 (2007), no.6,  1119--1125.
\bibitem{CH} S. S. Chern,  Complex manifolds without potential theory, Universitext Springer-Verlag, New York-Heidelberg, 1979, iii+152 pp.
\bibitem{DM} J. Davidov,  O. Mushkarov,  Twistorial construction of generalized Kähler manifolds,   J. Geom. Phys. 57(2007), 889--901.
\bibitem{DF} Dotti, I.G., and Fino, A.,  HyperKähler torsion structures invariant by nilpotent Lie groups. Classical Quant. Grav. 19, (2002), no. 3,  551–562.
\bibitem{EFV} Enrietti, N., Fino, A. and Vezzoni, L., Tamed symplectic forms and SKT metrics, J. Symplectic Geom. 10 (2012), no. 2, 203--223.
\bibitem{FG} Fino, A. and Grantcharov, G.,  Properties of manifolds with skew-symmetric torsion and special holonomy, Adv. Math. 189 (2004), no. 2, 439--450.
\bibitem{FP2} Fino, A. and Paradiso, F., Generalized K\"ahler almost abelian Lie groups, Ann. Mat. Pura Appl. (4) 200 (2021), no. 4, 1781--1812.
\bibitem{FP} Fino, A. and Paradiso, F., Hermitian structures on a class of almost nilpotent solvmanifolds, J. Algebra 609 (2022),  861--925.
\bibitem{FP3} Fino, A. and Paradiso, F., Hermitian structures on six-dimensional almost nilpotent solvmanifolds, preprint arXiv:2306.03485.
\bibitem{FP4} Fino, A. and Paradiso, F., Balanced Hermitian structures on almost abelian Lie algebras. J. Pure Appl. Algebra,(2023) 227(2), p.107186.
\bibitem{FPS04} Fino, A.,  Parton, M., and Salamon, S., Families of strong KT structures in six dimensions, 
Comment. Math. Helv. 79 (2004), no. 2, 317--340.
\bibitem{FT} A. Fino, A. Tomassini, Non-K\"ahler solvmanifolds with generalized K\"ahler structure, J. Symplect.Geom. 7 (2009), 1--14.
\bibitem{FV} Fino, A. and Vezzoni, L.,  On the existence of balanced and SKT metrics on nilmanifolds,  Proc. Amer. Math. Soc., 144(6), (2016),  2455--2459.
\bibitem{FS} Freibert, M. and Swann, A. Two-step solvable SKT shears,  Math. Z. 299, (2021), no. 3, 1703-1739.
\bibitem{FS2} Freibert, M. and Swann, Compatibility of balanced and SKT metrics on two-step solvable Lie groups,  preprint arXiv:2203.16638,  to appear in Transform. Groups.
\bibitem{PG} Gauduchon, P.,  Hermitian connections and Dirac operators, Boll. Un. Mat. Ital. B (7)11(1997), no. 2, 257--288.
\bibitem{Gualtieri} Gualtieri, M.,  Generalized complex geometry, DPhil thesis, Oxford University, 2004.
\bibitem{CG}   Cavalcanti, G. R.  and  Gualtieri, M., Blowing up generalized K\"ahler $4$-manifolds, Bull. Braz. Math. Soc. (N.S.)
 42 (2011),  537--557.
\bibitem {GZ}  Guo, Y. and Zheng, F.,  Hermitian geometry of Lie algebras with abelian ideals of codimension 2, Math. Z. 304 (2023), no. 3, Paper No. 51, 24 pp.
\bibitem{Has} Hasegawa, K., Complex and K\"ahler structures on compact solvmanifolds, J. Symplectic Geom. 3 (2005), no. 4, 749--767.
\bibitem{HA} Hattori, A.,Spectral sequence in the de Rham cohomology of fibre bundles, J. Fac. Sci. Univ. Tokyo Sect. I 8, (1960),  289–331.
\bibitem{Hitchin} Hitchin, N. J.,  Instantons, Poisson structures and generalized K\"ahler geometry, Comm. Math. Phys.  265 (2006), 131-164.
\bibitem{LRV} Lauret, J. and Rodríguez Valencia, E.A., On the Chern‐Ricci flow and its solitons for Lie groups,  Math. Nachr. 288  (2015), 1512--1526.
\bibitem{MS} Madsen, T.B. and Swann, A., Invariant strong KT geometry on four-dimensional solvable Lie groups,  J. Lie Theory, 21 (2011), 55--70.
\bibitem {RT} Rawashdeh, M. and Thompson, G., The inverse problem for six-dimensional codimension two nilradical Lie algebras, J. Math. Phys. 47 (2006), no. 11, 112901, 29 pp.
\bibitem{SR} Rollenske, S., Dolbeault cohomology of nilmanifolds with left-invariant complex structure, in  W. Ebeling, K. Hulek, K. Smoczyk (eds.), Complex and Differential Geometry: Conference held at Leibniz Universität Hannover, September 14--18, 2009, Springer Proceedings in Mathematics 8, Springer, 2011, 369–392 	
\bibitem{Strominger}  Strominger, A.,  Superstrings with torsion, Nuclear Phys. B 274 (1986), 254--284.
\bibitem{UG} Ugarte, L., Hermitian structures on six-dimensional nilmanifolds,  Transform. Groups 12 (2007), 175-202.
\bibitem{VZ} Vezzoni, L., A note on canonical Ricci forms on 2-step nilmanifolds, Proc. Amer. Math. Soc. 141 (2013), 325-333.
\end{thebibliography}
\end{document}